\newtheorem{thm}{Theorem}[section]
\newtheorem{lemma}[thm]{Lemma}
\newtheorem{proposition}[thm]{Proposition}
\newtheorem{definition}[thm]{Definition}
\newtheorem{corollary}[thm]{Corollary}
\newcommand{\p}{\mathbb{P}}
\newcommand{\q}{\mathbb{Q}}
\newcommand{\dom}{\mathrm{dom}}
\newcommand{\ran}{\mathrm{ran}}
\newcommand{\add}{\textrm{Add}}
\newcommand{\h}{\mathrm{ht}}
\newcommand{\col}{\mathrm{Col}}
\newcommand{\restrict}{\upharpoonright}
\begin{document}

\title{A Forcing Axiom for a Non-Special Aronszajn Tree}

\author{John Krueger}

\address{John Krueger \\ Department of Mathematics \\ 
	University of North Texas \\
	1155 Union Circle \#311430 \\
	Denton, TX 76203}
\email{jkrueger@unt.edu}

\thanks{2020 \emph{Mathematics Subject Classification:} 
	Primary 03E35, 03E57; Secondary 03E05, 03E65.}

\thanks{\emph{Key words and phrases.} Aronszajn tree, stationary antichain, \textsf{PFA}($T^*$), stationarily Knaster.}

\thanks{This material is based upon work supported by the National Science Foundation under Grant
		No. DMS-1464859.}

\begin{abstract}
	Suppose that $T^*$ is an $\omega_1$-Aronszajn tree with no stationary antichain. 
	We introduce a forcing axiom \textsf{PFA}($T^*$) for proper forcings which preserve 
	these properties of $T^*$. 
	We prove that \textsf{PFA}($T^*$) implies many of the strong consequences of \textsf{PFA}, 
	such as the failure of very weak club guessing, that all of the cardinal characteristics 
	of the continuum are greater than $\omega_1$, and the $P$-ideal dichotomy. 
	On the other hand, \textsf{PFA}($T^*$) implies some of the consequences of diamond principles, 
	such as the existence of Knaster forcings which are not stationarily Knaster.
\end{abstract}

\maketitle

In one of the earliest applications of the method of forcing in set theory, 
Solovay and Tennenbaum \cite{ST} proved the consistency of Suslin's hypothesis, which is the statement 
that there does not exist an $\omega_1$-Suslin tree. 
Baumgartner, Malitz, and Reinhardt \cite{baumgartner} improved this by showing that 
\textsf{MA}$_{\omega_1}$ implies that every $\omega_1$-Aronszajn tree is special, a statement 
which implies Suslin's hypothesis. 
Shelah \cite[Chapter IX]{shelah} proved that Suslin's hypothesis does not imply that all 
$\omega_1$-Aronszajn trees are special by constructing a model in which there are no $\omega_1$-Suslin 
trees but there exists a non-special Aronszajn tree. 
In Shelah's model, the non-special Aronszajn tree satisfies that there are stationarily many 
levels on which the tree is special, and stationary many levels on which there is no 
stationary antichain (recall that an antichain of a tree is stationary if the set of heights of 
its nodes forms a stationary set). 
Schlindwein \cite{chaz} improved this result by constructing a model in which there are no 
$\omega_1$-Suslin trees but there exists an $\omega_1$-Aronszajn tree which has no stationary antichain.

Building on Woodin's work \cite{woodin} on the $\mathbb{S}_{\mathrm{max}}$ forcing extension 
of $L(\mathbb R)$, Larson \cite{larson} produced a maximal model of the existence of a minimal Suslin tree. 
Larson and Todorcevic \cite{larsontodor} proved the consistency of a positive solution 
to Kat\v{e}tov's problem, by creating a model in which 
every compact space whose square is $T_5$ is metrizable. 
Their method involved analyzing the maximal amount of \textsf{MA}$_{\omega_1}$ consistent 
with the existence of a Suslin tree. 
Later Todorcevic \cite{PFAS} introduced a forcing axiom \textsf{PFA}($S$), 
which is a version of \textsf{PFA} but restricted to 
forcings which preserve the Suslinness of a particular coherent $\omega_1$-Suslin tree.

In this article we expand on these results by developing a forcing axiom for proper forcings 
which preserve a particular $\omega_1$-Aronszajn tree with no stationary antichain. 
Let $T^*$ denote an $\omega_1$-Aronszajn tree with no stationary antichain. 
We let \textsf{PFA}($T^*)$ be the statement that for any proper forcing poset $\p$ 
which forces that $T^*$ is still an $\omega_1$-Aronszajn tree with no stationary antichain, 
for any sequence $\langle D_\alpha : \alpha < \omega_1 \rangle$ of dense subsets of $\p$,  
there exists a filter $G$ such that $G \cap D_\alpha \ne \emptyset$ for all $\alpha < \omega_1$.
This forcing axiom is similar in spirit to the previously studied forcing axiom \textsf{PFA}($S$), 
but has some dramatically different consequences such as the non-existence of $S$-spaces. 
Both forcing axioms are useful for understanding the relationship between consequences 
of \textsf{PFA}, properties of $\omega_1$-trees, and the $\omega_1$-chain condition of forcing posets.

The consistency of \textsf{PFA}($T^*$) is established from a supercompact cardinal 
using forcing iteration preservation theorems of Shelah \cite{shelah} and Schlindwein \cite{chaz}. 
We then derive a variety of consequences of \textsf{PFA}($T^*$). 
These consequences include Suslin's hypothesis, the failure of very weak club guessing, that all of the standard 
cardinal characteristic of the continuum are greater than $\omega_1$, and the $P$-ideal dichotomy. 
We also prove that under \textsf{PFA}($T^*$), the forcing poset consisting of finite antichains of 
$T^*$ ordered by reverse inclusion is Knaster but not stationarily Knaster, which is in contrast 
to \textsf{MA}$_{\omega_1}$ which implies that all Knaster forcings are stationarily Knaster. 
The existence of a Knaster forcing which is not stationarily Knaster is also shown to 
follow from $\Diamond^*$. 
Thus, our forcing axiom \textsf{PFA}($T^*$) implies a significant portion of the consequences 
of \textsf{PFA} while also having consequences in common with diamond principles.

I would like to thank Sean Cox and Justin Moore for helpful discussions on the topics 
in this paper.

\section{Forcings which preserve $T^*$}

In this section, we develop the basic ideas concerning forcing posets which preserve a 
non-special Aronszajn tree.

\begin{definition}
	Let $T$ be a tree with height $\omega_1$. 
	A set $A \subseteq T$ is said to be a \emph{stationary antichain} 
	if $A$ is an antichain of $T$ 
	and the set $\{ \h_{T^*}(x) : x \in A \}$ 
	is a stationary subset of $\omega_1$.
	\end{definition}

Observe that any $\omega_1$-Suslin tree has no uncountable antichain, and hence 
no stationary antichain,  
but any special $\omega_1$-Aronszajn tree has a stationary antichain. 
In fact, if an $\omega_1$-tree 
has a special subtree whose nodes meet stationarily many levels, 
then by a pushing down argument it has a stationary antichain.

For the remainder of the paper, whenever we mention $T^*$ we will assume that 
$T^*$ is an $\omega_1$-Aronszajn tree with no stationary antichain.

\begin{definition}
	A forcing poset $\p$ is said to be \emph{$T^*$-preserving} if $\p$ forces that 
	$T^*$ is an $\omega_1$-Aronszajn tree with no stationary antichain.
	\end{definition}

Let $T$ be an $\omega_1$-tree. 
We define a relation $<_{T}^-$ by letting 
$x <_{T}^- y$ if $\h_{T}(x) < \h_{T}(y)$ and 
$x \not <_{T} y$. 
Observe that $x <_{T}^- y$ implies that $x$ and $y$ 
are not comparable in $T$. 
As a word of caution to the reader, the relation 
$<_{T}^-$ is not transitive.

\begin{definition}
	Let $N$ be a countable set such that $\delta := N \cap \omega_1$ is an ordinal. 
	A node $x \in T_\delta$ is said to be \emph{$(N,T)$-generic} if for any 
	set $A \subseteq T$ which is a member of $N$ and any relation $R \in \{ <_{T}, <_{T}^- \}$, 
	if $x \in A$, then there exists $y \ R \ x$ such that $y \in A$.
	\end{definition}

\begin{lemma}
	Let $\theta$ be a regular cardinal with $T \in H(\theta)$, and $N$ a 
	countable elementary substructure of $(H(\theta),\in,T)$. 
	Let $\delta := N \cap \omega_1$. 
	Assume that $x \in T_{\delta}$ is $(N,T)$-generic. 
	Let $B \subseteq T$ be in $N$ and $R \in \{ <_{T}, <_{T}^- \}$. 
	If for all $y \ R \ x$, $y \in B$, then $x \in B$.
	\end{lemma}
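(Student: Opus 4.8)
The plan is to prove the contrapositive by exploiting the genericity of $x$ applied to the complement of $B$. Observe that the definition of $(N,T)$-generic is, read carefully, a positive statement: if $x$ belongs to some set $A \in N$ with $A \subseteq T$, then $x$ is ``approximated from below'' (in the sense of $R$) by an element of $A$. To turn the hypothesis ``$y \in B$ for all $y \mathrel R x$'' into the conclusion ``$x \in B$'', the natural move is to suppose $x \notin B$ and derive a witness $y \mathrel R x$ with $y \notin B$, contradicting the hypothesis.

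So I would proceed as follows. Assume toward a contradiction that $x \notin B$. Set $A := T \setminus B$. Since $x \in T_\delta \subseteq T$ and $x \notin B$, we have $x \in A$, and clearly $A \subseteq T$. The key point is that $A \in N$: because $N \prec (H(\theta),\in,T)$, the tree $T$ is a parameter of the structure and hence $T \in N$, and since $B \in N$ by hypothesis, elementarity gives $A = T \setminus B \in N$. Now apply the definition of $(N,T)$-genericity to the set $A$ and the relation $R$: since $x \in A$ and $A \in N$ with $A \subseteq T$, there exists $y \mathrel R x$ with $y \in A$, i.e. $y \notin B$. This directly contradicts the assumption that every $y$ with $y \mathrel R x$ lies in $B$. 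Hence $x \in B$, as desired.

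There is essentially no obstacle here; the lemma is a routine reformulation of the genericity definition. The only thing that requires a moment's attention is the verification that $T \setminus B \in N$, which hinges on $T$ being a distinguished predicate of the structure that $N$ is an elementary substructure of (so $T \in N$), together with closure of $N$ under the elementary operation of relative complement. Everything else is a direct unwinding of definitions.
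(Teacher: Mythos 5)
Your proof is correct and is essentially identical to the paper's argument: both pass to $A := T \setminus B$, note $A \in N$ by elementarity, and apply $(N,T)$-genericity of $x$ to $A$ to produce $y \mathrel R x$ with $y \notin B$, contradicting the hypothesis.
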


\begin{proof}
	Suppose for a contradiction that for all $y \ R \ x$, $y \in B$, but 
	$x \notin B$. 
	Define $A := T \setminus B$. 
	Then $A \in N$ by elementarity, and $x \in A$. 
	Since $x$ is $(N,T)$-generic, there exists $y \ R \ x$ such that $y \in A$. 
	Then $y \notin B$, which is a contradiction.
	\end{proof}

The next lemma basically appears in \cite{chaz}, but under the assumption that 
$T^*$ is an $\omega_1$-Suslin tree.

\begin{lemma}
	Fix a regular cardinal $\theta$ such that $T^* \in H(\theta)$. 
	Let $N$ be a countable elementary substructure of $(H(\theta),\in,T^*)$. 
	Then for all $x \in T^*_{N \cap \omega_1}$, $x$ is $(N,T^*)$-generic.
\end{lemma}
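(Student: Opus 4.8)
The plan is to fix $\delta := N \cap \omega_1$ and an arbitrary node $x \in T^*_\delta$, and to verify the definition of $(N,T^*)$-genericity directly: given $A \subseteq T^*$ with $A \in N$, given $R \in \{ <_{T^*}, <_{T^*}^- \}$, and assuming $x \in A$, I must produce some $y \ R \ x$ with $y \in A$. I would argue by contradiction in each of the two cases for $R$, using that $N \prec (H(\theta),\in,T^*)$ together with the hypothesis that $T^*$ has no stationary antichain (for $R = {<_{T^*}}$) and that $T^*$ is Aronszajn (for $R = {<_{T^*}^-}$). Two standard facts about a countable elementary substructure $N$ will be used repeatedly: every club subset of $\omega_1$ lying in $N$ contains $\delta$, and every countable set that is an element of $N$ is a subset of $N$.

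For $R = {<_{T^*}}$, suppose toward a contradiction that no $y <_{T^*} x$ lies in $A$, i.e., $x$ has no proper predecessor in $A$. Let $A_0$ be the set of $<_{T^*}$-minimal elements of $A$. Then $A_0 \in N$, since it is definable from the parameters $A$ and $T^*$; moreover $x \in A_0$, and $A_0$ is an antichain of $T^*$, because if $z_0 <_{T^*} z_1$ with $z_0, z_1 \in A_0$ then $z_1$ has the proper predecessor $z_0 \in A$, contradicting minimality. Since ``$T^*$ has no stationary antichain'' is absolute between $V$ and $H(\theta)$, it holds in $N$, so elementarity yields a club $E \in N$ disjoint from $\{ \h_{T^*}(z) : z \in A_0 \}$. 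Then $\delta \in E$, but $x \in A_0 \cap T^*_\delta$ gives $\delta \in \{ \h_{T^*}(z) : z \in A_0 \}$, a contradiction.

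For $R = {<_{T^*}^-}$, observe first that for a node $y$ of height below $\delta = \h_{T^*}(x)$, the relation $y <_{T^*}^- x$ simply means that $y$ is not a proper predecessor of $x$. Suppose toward a contradiction that every $y \in A$ of height below $\delta$ is a proper predecessor of $x$; then any two elements of $A$ of height below $\delta$ both lie on the branch $\{ z : z <_{T^*} x \}$ and hence are comparable. I claim this forces $A$ to be a chain. Otherwise, the set of $\gamma < \omega_1$ for which $A$ contains two incomparable nodes of height at most $\gamma$ is nonempty and is definable from $A$ and $T^*$, so its least element $\gamma_0$ lies in $N$ and therefore $\gamma_0 < \delta$; but then $A$ has two incomparable nodes of height below $\delta$, contradicting the previous sentence. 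So $A$ is a chain, and since $T^*$ is an $\omega_1$-Aronszajn tree it has no uncountable chain; hence $A$ is countable. Being a countable element of $N$, $A \subseteq N$, so $x \in N$ and $\delta = \h_{T^*}(x) \in N$, contradicting $\delta = N \cap \omega_1$.

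The step I expect to be the crux is the case $R = {<_{T^*}}$: for a general $\omega_1$-Aronszajn tree this form of genericity can fail, and what makes it go through here is precisely that the antichain $A_0$ of $<_{T^*}$-minimal elements of $A$ has a non-stationary set of heights. The delicate points are of the bookkeeping kind --- checking that ``$T^*$ has no stationary antichain'' and ``$A_0$ is an antichain'' are first-order expressible over $H(\theta)$ and absolute (which is why one should take $\theta \geq \omega_2$, so that stationarity of subsets of $\omega_1$ is computed correctly in $H(\theta)$), and invoking the two standard facts about $N$ noted above. The case $R = {<_{T^*}^-}$ by contrast uses only elementarity and the absence of uncountable chains in $T^*$, and is more routine.
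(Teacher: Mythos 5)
Your proof is correct, and the two cases are handled in somewhat different ways compared to the paper, so a comparison is warranted.

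For $R = {<_{T^*}}$, your set $A_0$ of $<_{T^*}$-minimal elements of $A$ coincides with the paper's auxiliary set $B$ in this case, and the two arguments are contrapositives of one another: the paper observes that $\delta$ lies in the height set of $B$, which lies in $N$, so by elementarity that set is stationary and $B$ is a stationary antichain; you instead start from the nonexistence of stationary antichains, pull a club $E \in N$ missing the heights of $A_0$ by elementarity, and contradict $\delta \in E$. Same content, opposite direction of argument.

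For $R = {<_{T^*}^-}$ you take a genuinely different route. The paper again forms the set $B$ of \emph{all} nodes of $A$ failing genericity, shows by elementarity (pivoting on the membership $x \in B$) that $B$ is a chain, and then uses stationarity of the height set of $B$ to conclude $B$ is an uncountable chain, contradicting Aronszajn. You instead argue that $A$ \emph{itself} must be a chain: elements of $A$ of height below $\delta$ all lie below $x$ and so are pairwise comparable, and the least $\gamma$ witnessing incomparability in $A$ (if any) is definable from $A$ and $T^*$, hence in $N$, hence below $\delta$, a contradiction. Then $A$ is a chain in an Aronszajn tree, hence countable; being a countable element of $N$, $A \subseteq N$, so $x \in N$ and $\delta = \h_{T^*}(x) \in N$, contradicting $\delta = N \cap \omega_1$. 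This is slightly more elementary than the paper's argument (no stationarity is needed in this case) at the cost of abandoning the paper's uniform treatment of both cases through the single auxiliary set $B$ with stationary heights. Your remark that one needs $\theta \ge \omega_2$ so that $H(\theta)$ computes stationarity of subsets of $\omega_1$ correctly is a fair precision that the paper leaves implicit.
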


\begin{proof}
	Let $\delta := N \cap \omega_1$. 
	Assume that $x \in T^*_{\delta}$, and we will show that $x$ is $(N,T^*)$-generic. 
	Fix $A \in N$ and $R \in \{ <_{T^*}, <_{T^*}^- \}$. 
	Suppose for a contradiction that $x \in A$, but for all $y \ R \ x$, $y \notin A$. 
	Define $B$ as the set of $z \in T^*$ such that $z \in A$ and 
	for all $y \ R \ z$, $y \notin A$. 
	Then $B \in N$ by elementarity, and $x \in B$. 
	Since $x \in B$, the set $S := \{ \h_{T^*}(z) : z \in B \}$ contains $\delta$.  
	But $\delta \in C$ for any club $C \subseteq \omega_1$ in $N$, so $S$ is stationary 
	by elementarity.
	
	First, assume that $R$ equals $<_{T^*}$. 
	Consider $z_1$ and $z_2$ in $B$, where $\h_{T^*}(z_1) \le \h_{T^*}(z_2)$. 
	If $z_1$ and $z_2$ have the same height, then they are incomparable in $T^*$. 
	If $\h_{T^*}(z_1) < \h_{T^*}(z_2)$, then since $z_1$ and $z_2$ are both in $B$, 
	by the definition of $B$ 
	we must have that $z_1$ is not $<_{T^*}$-below $z_2$. 
	Thus, any two members of $B$ are incomparable, that is, $B$ is an antichain. 
	Since $\{ \h_{T^*}(z) : z \in B \} = S$ is stationary, $B$ is a stationary antichain, 
	which contradicts that $T^*$ has no 
	stationary antichain.
	
	Secondly, assume that $R$ equals $<_{T^*}^-$. 
	Consider $z_1$ and $z_2$ in $B \cap N$. 
	Since $x \in B$, 
	by the definition of $B$ it is not the case that $z_1$ and $z_2$ are 
	$<_{T^*}^-$-below $x$. 
	As they have lower heights than $x$, that means $z_1$ and $z_2$ are both 
	$<_{T^*}$-below $x$. 
	This implies that $z_1$ and $z_2$ are comparable in $T^*$. 
	By elementarity, it follows that any two members of $B$ are comparable in $T^*$, 
	that is, $B$ is a chain. 
	And since $S$ is stationary, $B$ is uncountable. 
	This contradicts that $T^*$ is an $\omega_1$-Aronszajn tree. 
	\end{proof}

While the property of being $T^*$-preserving is very 
natural, in practice it will be helpful 
to have a related property which is more technical 
(see \cite[Chapter IX]{shelah} and \cite{chaz}).

\begin{definition}
	Let $T$ be an $\omega_1$-tree and $\p$ a forcing poset. 
	Let $\theta$ be a regular cardinal such that $T$ and $\p$ are members of $H(\theta)$ 
	and $N$ a countable elementary substructure of $(H(\theta),\in,T,\p)$. 
	A condition $q \in \p$ is said to be 
	\emph{$(N,\p,T)$-generic} if $q$ is $(N,\p)$-generic, and whenever $x \in T_{N \cap \omega_1}$ 
	is $(N,T)$-generic, 
	then $q$ forces that $x$ is $(N[\dot G_\p],T)$-generic.
\end{definition}

Assume that $q$ is $(N,\p)$-generic. 
Then the following property is easily seen to be equivalent to $q$ being $(N,\p,T)$-generic: 
for any node $x \in T_{N \cap \omega_1}$ which is $(N,T)$-generic, 
$\p$-name $\dot A \in N$ for a subset of $T$, and relation $R \in \{ <_{T}, <_{T}^- \}$, 
whenever $r \le q$ forces that $x \in \dot A$, then there exist $y \ R \ x$ and $s \le r$ such that 
$s$ forces that $y \in \dot A$.

\begin{definition}
	Let $T$ be an $\omega_1$-tree and $\p$ a forcing poset. 
	We say that $\p$ is \emph{$T$-proper} if for all large enough regular cardinals $\theta$ 
	with $\p$ and $T$ in $H(\theta)$, 
	there are club many $N$ in $P_{\omega_1}(H(\theta))$ such that $N$ is an elementary substructure 
	of $(H(\theta),\in,T,\p)$ and for all $p \in N \cap \p$, 
	there is $q \le p$ which is $(N,\p,T)$-generic.
	\end{definition}
	
\begin{lemma}
	Let $\p$ be a proper forcing poset. 
	If $\p$ is $T^*$-preserving, then 
	$\p$ is $T^*$-proper.
	\end{lemma}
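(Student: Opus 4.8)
The plan is to show that every countable model supplied by properness already works: any condition that is $(N,\p)$-generic is automatically $(N,\p,T^*)$-generic once $\p$ is $T^*$-preserving, so the extra genericity clause comes for free. Fix a large enough regular $\theta$ with $\p,T^* \in H(\theta)$. Since $\p$ is proper, there are club many countable $N \prec (H(\theta),\in,T^*,\p)$ such that for every $p \in N \cap \p$ there is $q \le p$ which is $(N,\p)$-generic; fix such an $N$ and put $\delta := N \cap \omega_1$. Given $p \in N \cap \p$, let $q \le p$ be $(N,\p)$-generic; I claim $q$ is $(N,\p,T^*)$-generic. By the reformulation stated just after the definition of $(N,\p,T)$-genericity, together with the Lemma above guaranteeing that \emph{every} node of $T^*$ at level $\delta$ is $(N,T^*)$-generic (so the genericity hypothesis on the node is vacuous), this reduces to the following: for every $x \in T^*_\delta$, every $\p$-name $\dot A \in N$ for a subset of $T^*$, every $R \in \{<_{T^*},<_{T^*}^-\}$, and every $r \le q$ with $r \Vdash x \in \dot A$, there exist $y \mathrel R x$ and $s \le r$ with $s \Vdash y \in \dot A$.

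Suppose this fails for some $x,\dot A,R,r$. The set $\{\,y : y \mathrel R x\,\}$ is computed identically in $V$ and in any forcing extension, since it depends only on the fixed tree order and node heights of $T^*$, and because $R$ strictly lowers height it is contained in the countable set $T^* \restrict \delta$. For each such $y$, the assumed failure says that no extension of $r$ forces $y \in \dot A$, so $r \Vdash y \notin \dot A$ (otherwise some $s \le r$ would force $y \in \dot A$). Now fix a $\p$-generic $G$ with $r \in G$ and work in $V[G]$; writing $A := \dot A^G$, we have $x \in A$ and $y \notin A$ for every $y \mathrel R x$. Set
\[
	B := \{\, z \in T^* : z \in A \ \text{and}\ y \notin A \text{ for all } y \mathrel R z \,\},
\]
so $x \in B$. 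A name for $B$ is definable from $\dot A$, $R$, and $T^*$, all in $N$, hence $B \in N[G]$; and since $q \in G$ is $(N,\p)$-generic and $\p$ is proper, $N[G] \prec (H(\theta)^{V[G]},\in,T^*,\p)$ and $N[G] \cap \omega_1 = \delta$. Therefore $S := \{\, \h_{T^*}(z) : z \in B \,\}$ lies in $N[G]$ and contains $\delta = \sup(N[G] \cap \omega_1)$, and since every club subset of $\omega_1$ lying in $N[G]$ contains $\delta$, the argument in the proof of the Lemma just cited shows that $S$ is stationary in $V[G]$.

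Next I repeat the case analysis of that proof, but now inside $V[G]$. If $R$ is $<_{T^*}$, then $B$ is an antichain: two elements of $B$ of the same height are incomparable, and if $z_1,z_2 \in B$ with $\h_{T^*}(z_1) < \h_{T^*}(z_2)$ then $z_1 \not<_{T^*} z_2$ because $z_1 \in A$ while $z_2 \in B$. Then $B$ is a stationary antichain of $T^*$ in $V[G]$, contradicting that $\p$ is $T^*$-preserving. If $R$ is $<_{T^*}^-$, then for $z_1,z_2 \in B \cap N[G]$, which have height below $\delta$, the fact that $x \in B$ and $z_i \in A$ forces $z_i \not<_{T^*}^- x$, hence $z_i <_{T^*} x$; so $z_1$ and $z_2$ are comparable, and by elementarity of $N[G]$ every two members of $B$ are comparable, i.e.\ $B$ is a chain. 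Since $S$ is stationary, $B$ is uncountable, contradicting that $T^*$ is an $\omega_1$-Aronszajn tree in $V[G]$. In either case we have a contradiction, which proves the claim and hence that $\p$ is $T^*$-proper.

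I expect the only genuinely delicate step to be the bookkeeping in the second paragraph: converting the failure of the conclusion into the clean statement that the definable set $B$ contains $x$ in $V[G]$, and invoking the standard but essential facts that a $(N,\p)$-generic condition forces $N[\dot G] \prec (H(\theta)^{V[\dot G]},\in,T^*,\p)$ with $N[\dot G] \cap \omega_1 = \delta$. Everything else is a direct transcription of the earlier stationary-antichain/uncountable-chain dichotomy into the generic extension, which goes through exactly because $T^*$-preservation keeps ``$T^*$ is an $\omega_1$-Aronszajn tree with no stationary antichain'' true in $V[G]$.
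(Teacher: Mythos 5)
Your proof is correct and follows essentially the same path as the paper's: both reduce the claim to the observation that once $q\in G$ is $(N,\p)$-generic and $\p$ is $T^*$-preserving, $N[G]$ is a countable elementary substructure of $H(\theta)^{V[G]}$ with $N[G]\cap\omega_1=\delta$, and $T^*$ is still an $\omega_1$-Aronszajn tree with no stationary antichain in $V[G]$. The only difference is that the paper at that point simply applies Lemma 1.5 in $V[G]$ to conclude that every $x\in T^*_\delta$ is $(N[G],T^*)$-generic, whereas you re-derive the content of Lemma 1.5 inside $V[G]$ by hand (defining $B$, checking $S$ is stationary, and re-running the antichain/chain dichotomy). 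That inline derivation is correct, but it duplicates work the paper has already done; noticing that Lemma 1.5 is stated for an arbitrary countable elementary substructure and hence transfers to $V[G]$ would shorten your argument to two lines.
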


\begin{proof}
	Assume that $\p$ is $T^*$-preserving. 
	Let $\theta$ be a large enough regular cardinal such that $T^*$ and $\p$ are in $H(\theta)$, 
	and let $N$ be a countable elementary substructure of $(H(\theta),\in,T^*,\p)$ such that every 
	member of $N \cap \p$ has an $(N,\p)$-generic extension. 
	Let $\delta := N \cap \omega_1$. 
	Consider $p \in N \cap \p$. 
	Fix $q \le p$ which is $(N,\p)$-generic. 
	We claim that $q$ is $(N,\p,T^*)$-generic. 

	Consider $x \in T^*_{\delta}$. 
	Let $G$ be a generic filter on $\p$ which contains $q$, and we claim that 
	$x$ is $(N[G],T^*)$-generic. 
	So $N[G]$ is an elementary substructure of $H(\theta)^{V[G]}$ 
	which contains $T^*$.  
	Since $q$ is $(N,\p)$-generic, $N[G] \cap \omega_1 = N \cap \omega_1 = \delta$. 
	Thus, $x \in T^*_{N[G] \cap \omega_1}$. 
	As $\p$ is $T^*$-preserving, in $V[G]$ 
	we have that $T^*$ is still an $\omega_1$-Aronszajn tree 
	with no stationary antichain. 
	So we can apply Lemma 1.5 in $V[G]$ to conclude that $x$ is $(N[G],T^*)$-generic.  
\end{proof}

While the converse of Lemma 1.8 
is not true in general, we will show below that if $\p$ is $T^*$-proper, 
then there is a $\p$-name $\dot \q$ for a forcing poset such that $\p * \dot \q$ is 
proper and $T^*$-preserving.

The next lemma is proved in \cite[Lemma 14]{chaz}.

\begin{lemma}
	If $\p$ is $T^*$-proper, then $\p$ forces that $T^*$ is an $\omega_1$-Aronszajn tree.
	\end{lemma}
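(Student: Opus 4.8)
The plan is to show that if $\p$ is $T^*$-proper then $\p$ forces $T^*$ to have no new branches, hence $\p$ forces $T^*$ to remain $\omega_1$-Aronszajn (it stays a tree of height $\omega_1$ trivially, since $\p$ being $T^*$-proper implies $\p$ preserves $\omega_1$, as $T^*$-properness entails ordinary properness by the existence of $(N,\p,T^*)$-generic conditions). So the content is: no uncountable branch is added. Suppose toward a contradiction that some $q \in \p$ forces $\dot b$ to be a new cofinal branch of $T^*$. Fix a large regular $\theta$ and, using $T^*$-properness, pick a countable $N \prec (H(\theta),\in,T^*,\p)$ with $q, \dot b \in N$ and with the property that every $p \in N \cap \p$ has an $(N,\p,T^*)$-generic extension. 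Let $\delta := N \cap \omega_1$.

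First I would choose $q' \le q$ that is $(N,\p,T^*)$-generic. Since $q'$ is $(N,\p)$-generic, it forces $N[\dot G_\p] \cap \omega_1 = \delta$, so $q'$ forces that $\dot b \restriction \delta$ is an element of $N[\dot G_\p]$ and that $\dot b$ passes through a unique node $x \in T^*_\delta$. The key point is that, because $\dot b$ is forced to be a \emph{new} branch, for each $z \in T^*_\delta$ the set $D_z$ of conditions deciding whether $\dot b$ passes through $z$, and in the positive case forcing $\dot b \ne$ (the old branch through $z$), is dense; by genericity of $q'$ we can arrange that $q'$ forces $x \notin V$, equivalently that no single old condition pins down which node of $T^*_\delta$ the branch hits. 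More usefully: I would show $q'$ forces that $x$ is $(N[\dot G_\p],T^*)$-generic, by the defining property of $(N,\p,T^*)$-genericity — but here is the catch, that property only applies to nodes $x \in T^*_\delta$ that are \emph{ground-model} $(N,T^*)$-generic, and by Lemma 1.5 \emph{every} node of $T^*_\delta$ is $(N,T^*)$-generic. So $q'$ forces: whatever node $x \in T^*_\delta$ the branch $\dot b$ chooses, $x$ is $(N[\dot G_\p],T^*)$-generic.

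Now I would derive the contradiction inside $V[G]$ for a generic $G \ni q'$. Let $x$ be the node of $T^*_\delta$ on $b$. Consider $A := \{ z \in T^* : z \in b \} = b$ itself, which lies in $N[G]$; it is a subset of $T^*$, and $x \in A$. Apply $(N[G],T^*)$-genericity of $x$ with the relation $<_{T^*}$: there is $y <_{T^*} x$ with $y \in A \cap N[G]$ — that is fine, $b$ has elements in $N[G]$ below $\delta$. The real leverage comes from using a relation or a set that detects newness: take instead $A := \{ z \in T^* : z \in \dot b^{G} \}$ but approached via a name $\dot A \in N$ and the equivalent formulation of $(N,\p,T^*)$-genericity stated after Definition 1.6. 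Concretely, let $r \le q'$ with $r \in N$ ... no: rather, I would argue that since $\dot b$ is new, $N[G]$ can compute $b$ but $N$ cannot compute $b \restriction \delta$, so the $<_{T^*}^-$-relation applied to $x$ forces a node $y <_{T^*}^- x$ with $y$ in the relevant set, and chaining this against the fact that $b$ is linearly ordered below $x$ (so contains no node $<_{T^*}^-$ to $x$) yields the contradiction. The main obstacle is exactly this last step: correctly packaging "$\dot b$ is a new branch" as a name $\dot A \in N$ for a subset of $T^*$ such that the $(N,\p,T^*)$-genericity of $q'$ applied to $\dot A$ and $R = {<_{T^*}^-}$ produces a node $y$ that a branch through $x$ provably cannot contain. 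I expect the cleanest route is to take $\dot A$ to name the branch $\dot b$ and observe that $q'$ forcing "$x$ is $(N[\dot G],T^*)$-generic" together with "$\dot b \restriction \delta \in N[\dot G]$" forces $b \restriction \delta \in N$ after all (by absoluteness of the branch below $\delta$ once $N[G]\cap\omega_1 = N\cap\omega_1$ and $T^*\restriction\delta \in N$), contradicting that $\dot b$ is new. That observation — that $T^* \restriction \delta \in N$ and $N[G] \cap \omega_1 = \delta$ force $\dot b \restriction \delta$ to be coded by a ground-model branch — is really the heart of the argument, and the genericity of $x$ is what rules out $\dot b \restriction \delta$ being a genuinely new branch through the countable tree $T^* \restriction \delta$.
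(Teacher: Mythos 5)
The paper itself does not give a proof of this lemma: it cites \cite[Lemma~14]{chaz}, so there is no internal argument to compare against word for word. Evaluating your proposal on its own terms: the correct idea is actually present in the middle of your text, but you then talk yourself out of it and finish with an argument that does not work.

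The clean argument is the one you sketch when you say ``the $<_{T^*}^-$-relation applied to $x$ forces a node $y \ {<_{T^*}^-}\ x$ with $y$ in the relevant set, and chaining this against the fact that $b$ is linearly ordered below $x$ yields the contradiction.'' Concretely: after fixing an $(N,\p,T^*)$-generic $q' \le q$ and a generic $G \ni q'$, the node $x \in T^*_\delta$ lying on $b := \dot b^G$ is $(N[G],T^*)$-generic (every node of $T^*_\delta$ is $(N,T^*)$-generic by Lemma~1.5, and $q'$ forces each of them to become $(N[\dot G],T^*)$-generic). Now take $A := b$, which is a subset of $T^*$ lying in $N[G]$ since $\dot b \in N$; we have $x \in A$, so applying Definition~1.3 in $V[G]$ with $R = {<_{T^*}^-}$ produces some $y$ with $\h_{T^*}(y) < \delta$, $y \not<_{T^*} x$, and $y \in b$. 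But $b$ is a chain containing $x$, so any $y \in b$ of lower height must satisfy $y <_{T^*} x$. This is a direct contradiction, and the ``newness'' of $\dot b$ is never used (it is automatic, since $T^*$ has no cofinal branch in $V$). There is nothing to package; your worry about ``correctly packaging $\dot b$ is new as a name $\dot A \in N$'' is unfounded.

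The alternative you settle on at the end is wrong on several counts. First, $T^* \restriction \delta \notin N$, since $\delta = N \cap \omega_1 \notin N$. Second, $b \restriction \delta$ is the set of $T^*$-predecessors of the ground-model node $x$; this is \emph{always} an element of $V$, no matter how new $b$ is, so ``$b \restriction \delta \in V$'' (or even ``$\in N$'') cannot contradict $\dot b$ being a new branch. Third, $x \notin N$ (because $\h_{T^*}(x) = \delta \notin N$), so $b \restriction \delta$, being definable only from $x$, need not lie in $N$ and in general does not; the absoluteness claim does not deliver $b \restriction \delta \in N$. You should delete that final paragraph and commit to the $<_{T^*}^-$ argument, which is complete as it stands.
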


Throughout the paper, we will prove that a multitude of forcing posets are $T^*$-proper. 
As a warm-up and because we will need it in the next section, 
let us prove that every $\omega_1$-closed forcing is $T^*$-proper. 

\begin{definition}
	A forcing poset $\p$ is \emph{$\omega_1$-generically closed} if for all large 
	enough regular cardinals $\theta$, there are club many $N \in P_{\omega_1}(H(\theta))$ 
	such that whenever $\langle p_n : n < \omega \rangle$ is a descending sequence of 
	conditions in $N \cap \p$ which meets every dense open subset of $\p$ in $N$, 
	then this sequence has a lower bound.\footnote{See \cite[Chapter V Section 1]{shelah} 
	for more information about this property.}
	\end{definition}

A sequence as described in Definition 1.10 will be called an \emph{$(N,\p)$-generic sequence}. 
Observe that countably closed forcings are $\omega_1$-generically closed.

\begin{proposition}
	Suppose that $\p$ is $\omega_1$-generically closed. 
	Then $\p$ is $T^*$-proper.
	\end{proposition}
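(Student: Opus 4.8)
The plan is to verify the definition of $T^*$-properness directly: fix a large regular $\theta$, take $N \in P_{\omega_1}(H(\theta))$ elementary in $(H(\theta),\in,T^*,\p)$ witnessing the $\omega_1$-generic closure, and show that each $p \in N \cap \p$ has an $(N,\p,T^*)$-generic extension. Since every $(N,\p)$-generic sequence through $N$ has a lower bound, the standard argument for $\omega_1$-closed forcing shows $p$ has an $(N,\p)$-generic extension $q$: enumerate the dense open subsets of $\p$ lying in $N$ as $\langle D_n : n < \omega\rangle$ (using that $N$ is countable and that one can do this enumeration inside $N$ up to a tail), and build a descending sequence $p = p_0 \ge p_1 \ge \cdots$ with $p_{n+1} \in D_n \cap N$, using elementarity at each step to find $p_{n+1}$ inside $N$; its lower bound $q$ is $(N,\p)$-generic. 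In particular $N[\dot G_\p] \cap \omega_1 = N \cap \omega_1 =: \delta$ is forced.

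The heart of the matter is to upgrade $q$ to being $(N,\p,T^*)$-generic, i.e.\ to show $q$ forces that every $(N,T^*)$-generic $x \in T^*_\delta$ remains $(N[\dot G_\p],T^*)$-generic. By Lemma~1.6, every node of $T^*_\delta$ is $(N,T^*)$-generic in $V$, so we just need: $q$ forces that every $x \in T^*_\delta$ is $(N[\dot G_\p],T^*)$-generic. I will use the equivalent reformulation of $(N,\p,T^*)$-genericity stated after Definition~1.6: given a node $x \in T^*_\delta$, a $\p$-name $\dot A \in N$ for a subset of $T^*$, a relation $R \in \{<_{T^*}, <_{T^*}^-\}$, and a condition $r \le q$ forcing $x \in \dot A$, I must produce $y \mathrel{R} x$ and $s \le r$ with $s \Vdash y \in \dot A$. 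The natural move is to build an $(N,\p)$-generic descending sequence $\langle r_n : n < \omega\rangle$ below $r$ (so $r_0 = r$) that decides more and more of $\dot A$ — specifically, for each node $z \in T^*_\delta$ and each $n$, arrange that some $r_m$ decides the statement ``$z \in \dot A$'' (there are only countably many $z \in T^*_\delta$ since $T^*$ is Aronszajn, and the relevant dense sets lie in $N$, so this can be interleaved with the generic-sequence requirements while staying inside $N$ at each finite stage). Let $s^*$ be a lower bound of this sequence; then $s^*$ is $(N,\p)$-generic and $s^* \le r \le q$, and $s^*$ decides ``$z \in \dot A$'' for every $z \in T^*_\delta$. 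Passing to a generic $G \ni s^*$, the set $A_0 := \{ z \in T^*_\delta : s^* \Vdash z \in \dot A\}$ equals $\dot A^G \cap T^*_\delta$, and $x \in A_0$ since $r$ forced $x \in \dot A$.

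Now I argue inside $V[G]$. The set $A_0$ is a subset of $T^*_\delta$ that is "definable from $N$-data," and I claim it has an element $R$-below $x$. Suppose not: then for all $y \mathrel{R} x$ we have $y \notin \dot A^G$. Consider the $\p$-name $\dot B$ for $\{ z \in T^* : z \in \dot A \text{ and } \forall y \mathrel{R} z,\ y \notin \dot A \}$; this name lies in $N$ by elementarity, and by our choice of $s^*$ we have $x \in \dot B^G$. Since $q$ is $(N,\p)$-generic, $N[G] \prec H(\theta)^{V[G]}$ with $N[G] \cap \omega_1 = \delta$, and $\dot B^G \in N[G]$, so $\h_{T^*}[\dot B^G]$ contains $\delta$ and hence is stationary by elementarity. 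As in the proof of Lemma~1.6, when $R = {<_{T^*}}$ the set $\dot B^G$ is an antichain (any two members of different height cannot be $<_{T^*}$-comparable by the defining property, and same-height members are incomparable), giving a stationary antichain of $T^*$ in $V[G]$, contradicting $T^*$-preservation; when $R = {<_{T^*}^-}$ the set $\dot B^G \cap N[G]$ consists of nodes below $x$ that are not $<^-_{T^*}$-below $x$, hence $<_{T^*}$-below $x$, hence pairwise comparable, and by elementarity $\dot B^G$ is a chain of stationary-many heights, hence uncountable, contradicting that $T^*$ is Aronszajn in $V[G]$. So some $y \mathrel{R} x$ lies in $A_0$, meaning $s^* \Vdash y \in \dot A$; taking $s := s^*$ completes the verification.

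The main obstacle is the bookkeeping in constructing $s^*$: I need a single $(N,\p)$-generic descending sequence that simultaneously (i) meets every dense open subset of $\p$ in $N$, and (ii) decides ``$z \in \dot A$'' for every $z \in T^*_\delta$, all while each finite initial segment stays inside $N$ so that the lower bound exists by $\omega_1$-generic closure. Since $T^*_\delta$ and the family of dense sets in $N$ are each countable and the relevant ``decide $z \in \dot A$'' sets are dense open and lie in $N$, a standard diagonal interleaving works; one must just be slightly careful that $x$ itself (and hence $T^*_\delta$, which is definable from $T^*$ and $\delta$) need not belong to $N$, but $T^*_\delta$ being countable is all that is used, and the sequence only ever passes through conditions in $N$.
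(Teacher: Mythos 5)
Your proposal has the right outward shape — verify the characterization of $(N,\p,T^*)$-genericity stated after Definition 1.6 — but the way you respond to a challenge $(x,\dot A,R,r)$ with $r\le q$ has gaps that sink the argument.

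The most serious is the second fusion sequence. You build $\langle r_n : n<\omega\rangle$ with $r_0=r$ and claim a lower bound $s^*$ exists. But $r\le q$ is in general \emph{not} an element of $N$, so $\langle r_n\rangle$ is not a sequence in $N\cap\p$, and $\omega_1$-generic closure only guarantees lower bounds for descending sequences that lie in $N\cap\p$ and meet the dense open subsets of $\p$ in $N$. Your own closing remark that ``the sequence only ever passes through conditions in $N$'' is incompatible with $r_0=r\notin N$. Without countable closure — a strictly stronger hypothesis — there is no reason for $s^*$ to exist. Second, the argument you run in $V[G]$ reaches a contradiction with ``$T^*$-preservation'': from the stationary antichain $\dot B^G$ you conclude a contradiction because $T^*$ supposedly has no stationary antichain in $V[G]$. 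But $T^*$-preservation of $\p$ is not a hypothesis of the proposition; it is a stronger property than what you are trying to prove, and nothing in $\omega_1$-generic closure gives it to you. The entire point of Lemma 1.4 and Lemma 1.5 is to run exactly this kind of argument in $V$, where the ``no stationary antichain'' assumption actually holds. Third, a smaller but real slip: $A_0\subseteq T^*_\delta$ consists of nodes of height exactly $\delta$, yet you claim $A_0$ has an element $R$-below $x$ — but any $y\ R\ x$ has $\h_{T^*}(y)<\delta$, so $y\notin T^*_\delta$ and $A_0$ cannot contain it.

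The paper's proof avoids all three issues by never extending $r$ at all, and by working entirely in $V$. It constructs a single descending sequence $\langle p_n\rangle$ in $N\cap\p$ that simultaneously meets every dense open set in $N$ and handles every triple $(x_m,\dot A_k,R_j)$ once. At the stage devoted to $(x_m,\dot A_k,R_j)$, one considers $B_n:=\{z\in T^* : \exists s\le p_n\ s\Vdash z\in\dot A_k\}$, which lies in $N$ by elementarity; if $x_m\in B_n$, Lemma 1.5, applied in $V$, gives some $y\ R_j\ x_m$ with $y\in B_n$, and one takes $p_{n+1}\le p_n$ in $N$ forcing $y\in\dot A_k$. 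The lower bound $q$ of this sequence is then $(N,\p,T^*)$-generic because, given any later challenge $r\le q$ forcing $x_m\in\dot A_k$, we already have $r\le p_n$, so $r$ itself witnesses $x_m\in B_n$, and $r\le p_{n+1}$ forces $y\in\dot A_k$; one may simply take $s:=r$. This one-pass ``fusion handles all challenges in advance'' structure is what you need; trying to extend $r$ after the fact cannot work under the given hypotheses.
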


\begin{proof}
	Fix a large enough regular cardinal $\theta$ and a countable elementary 
	substructure $N$ of $H(\theta)$ which contains $\p$ and $T^*$ and satisfies that 
	every descending $(N,\p)$-generic sequence has a lower bound.
	Let $\delta := N \cap \omega_1$.
	
	Let $\langle x_n : n < \omega \rangle$, $\langle \dot A_n : n < \omega \rangle$, 
	and $\langle D_n : n < \omega \rangle$ enumerate all of the nodes 
	in $T^*_{\delta}$, all $\p$-names for subsets of $T^*$ in $N$, and all 
	dense open subsets of $\p$ in $N$. 
	Fix a bijection $f : \omega \to \omega \times \omega \times 3$.
	
	Consider $p \in N \cap \p$. 
	We define by induction a descending sequence of conditions $\langle p_n : n < \omega \rangle$ 
	in $N \cap \p$ as follows. 
	Let $p_0 := p$. 
	Assume that $n < \omega$ and $p_n$ is defined. 
	Let $f(n) = (m,k,j)$. 
	If $j = 0$, fix $p_{n+1} \le p_n$ 
	in $N \cap D_m$ (in this case, we ignore $k$). 
	Suppose that $j \ne 0$. 
	If $j = 1$, let $R$ be the relation $<_{T^*}$, and if $j = 2$, 
	let $R$ be the relation $<_{T^*}^-$.
	
	Let $B_n$ denote the set of $z \in T^*$ such that some extension of $p_{n}$ 
	forces that $z \in \dot A_k$. 
	Then $B_n \in N$ by elementarity. 
	If $x_m \notin B_n$, then let $p_{n+1} := p_{n}$. 
	Suppose that $x_m \in B_n$. 
	Since $x_m$ is $(N,T^*)$-generic, there exists $y \ R \ x_m$ such that 
	$y \in B_n$. 
	Then $y \in N$. 
	By the definition of $B_n$ and elementarity, we can fix $p_{n+1} \le p_n$ 
	in $N \cap \p$ which forces that $y \in \dot A_k$.
	
	This completes the definition of $\langle p_n : n < \omega \rangle$. 
	Observe that this sequence is $(N,\p)$-generic. 
	So we can find $q \in \p$ such that $q \le p_n$ for all $n < \omega$. 
	In particular, $q \le p$. 
	Clearly, $q$ is $(N,\p)$-generic. 
	To see that it is $(N,\p,T^*)$-generic, let $x \in T^*_{\delta}$, 
	$\dot A \in N$ a $\p$-name for a subset of $T^*$, and 
	$R \in \{ <_{T^*}, <_{T^*}^- \}$. 
	Suppose that $r \le q$ and $r$ 
	forces that $x \in \dot A$.
	
	Fix $m$ and $k$ such that $x = x_m$ and $\dot A = \dot A_k$. 
	Let $j := 1$ if $R$ equals $<_{T^*}$ and $j := 2$ if $R$ equals $<_{T^*}^-$. 
	Fix $n$ such that $f(n) = (m,k,j)$. 
	Now $r \le p_n$ and $r$ forces that $x_m \in \dot A_k$. 
	So $r$ witnesses that $x_m \in B_n$. 
	By construction, there exists $y \ R \ x_m$ 
	such that $p_{n+1}$ forces that $y \in \dot A_k$. 
	Since $r \le p_{n+1}$, $r$ forces that $y \in \dot A$.
	\end{proof}

\begin{corollary}
	Any $\omega_1$-closed forcing poset is $T^*$-proper.
	\end{corollary}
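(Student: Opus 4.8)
The plan is to observe that this is an immediate consequence of Proposition 1.11 together with the remark following Definition 1.10. First I would recall that a forcing poset $\p$ is $\omega_1$-closed, equivalently countably closed, if every descending sequence $\langle p_n : n < \omega \rangle$ of conditions in $\p$ has a lower bound in $\p$. I would then argue that such a $\p$ is $\omega_1$-generically closed. Fix a large enough regular cardinal $\theta$, and let $N$ be any countable elementary substructure of $H(\theta)$ with $\p \in N$; the collection of all such $N$ is club in $P_{\omega_1}(H(\theta))$. If $\langle p_n : n < \omega \rangle$ is an $(N,\p)$-generic sequence, then in particular it is a descending $\omega$-sequence of conditions of $\p$, so by $\omega_1$-closure it has a lower bound in $\p$. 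Hence the club of all such $N$ witnesses the defining property of $\omega_1$-generic closure, and $\p$ is $\omega_1$-generically closed.

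Having established this, the corollary follows at once by applying Proposition 1.11, which asserts that every $\omega_1$-generically closed forcing poset is $T^*$-proper. I do not anticipate any real obstacle here: the only point worth noting is that the definition of $\omega_1$-generic closure demands lower bounds only for $(N,\p)$-generic descending sequences, which is a weaker requirement than full countable closure, so nothing beyond the trivial observation above is needed, and in particular one never uses the genericity of the sequence to produce its lower bound.
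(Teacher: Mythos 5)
Your proof is correct and takes exactly the same route as the paper: the paper's remark after Definition 1.10 that countably closed forcings are $\omega_1$-generically closed, combined with Proposition 1.11, is precisely the argument you give. Nothing further is needed.
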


\section{A forcing axiom for $T^*$}

We now introduce and prove the consistency of a forcing axiom for $T^*$.

The following forcing poset for adding a Baumgartner club 
will be used in a number of different contexts throughout the paper, 
beginning in this section. 
It is a slight variation of a forcing of Baumgartner 
\cite[p.\ 926]{baumgartnerproper} for adding a club with finite conditions.

\begin{definition}
	Let $A \subseteq \omega_1$. 
	Define $\textrm{CU}(A)$ to be the forcing poset whose conditions are 
	functions $f$ mapping into $\omega_1$ satisfying:
	\begin{enumerate}
		\item $\dom(f)$ is a finite subset of $A$;
		\item if $\alpha < \beta$ are in $\dom(f)$, then $\alpha \le f(\alpha) < \beta$;
	\end{enumerate}
	and ordered by $g \le f$ if $f \subseteq g$.
\end{definition}

It is not difficult to show 
that if $A$ is stationary, then $\textrm{CU}(A)$ preserves $\omega_1$ and 
forces that the set 
$$
\bigcup \{ \dom(f) : f \in \dot G_{\textrm{CU}(A)} \}
$$
is a club subset of $A$.

The next result states that the property of being $T^*$-proper is satisfied by 
certain countable support forcing iterations.

\begin{thm}
	Let $T^*$ be an $\omega_1$-Aronszajn tree with no stationary antichain. 
	Let 
	$$
	\langle \p_i, \dot \q_j : i \le \alpha, \ j < \alpha \rangle
	$$
	be a countable support forcing iteration such that for all $\gamma < \alpha$, 
	$\p_\gamma$ forces either:
	\begin{enumerate}
		\item $\dot \q_\gamma$ is $T^*$-proper, or
		\item there exists an antichain $I \subseteq T^*$ such that 
		$\dot \q_\gamma = \textrm{CU}(\omega_1 \setminus S)$, 
		where $S = \{ \h_{T^*}(z) : z \in I \}$.
		\end{enumerate}
	Then $\p_\alpha$ is $T^*$-proper.
		\end{thm}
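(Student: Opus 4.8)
The plan is to prove the theorem by induction on $\beta \le \alpha$, and to make the induction pass through the limit stages I would prove a two--coordinate strengthening: for club many countable elementary substructures $N$ of a suitable $(H(\theta), \in, T^*, \langle \p_i, \dot \q_j \rangle)$, for every $i \le \beta$ with $i \in N$ and every $p \in \p_\beta$ such that $p \restrict i$ is $(N, \p_i, T^*)$-generic and $p \restrict i$ forces $p \restrict [i, \beta) \in N[\dot G_{\p_i}]$, there is $q \le p$ with $q \restrict i = p \restrict i$ which is $(N, \p_\beta, T^*)$-generic and such that $q \restrict i$ forces $q \restrict [i, \beta)$ to be $(N[\dot G_{\p_i}], \p_\beta / \dot G_{\p_i}, T^*)$-generic. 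Taking $i = 0$ recovers the statement of the theorem (note that by Lemma 1.5 every node of $T^*$ at level $N \cap \omega_1$ is automatically $(N, T^*)$-generic, so the hypothesis in Definition 1.6 is vacuous for $T^*$ and only the conclusion matters). The $(N, \p_\beta)$-genericity part of this is exactly the classical countable support iteration theorem for proper forcing, so the real work is to carry the extra clause of Definition 1.6 through the recursion. To make ``club many $N$'' work I would take $N \prec H(\chi)$ for $\chi$ much larger than $\theta$, containing the iteration and $T^*$, so that $N$ reflects, for each relevant $\gamma$, a club of structures witnessing that $\dot \q_\gamma$ is $T^*$-proper; this forces $N \cap H(\theta)$ and each further extension $N[\dot G_{\p_i}]$ into the appropriate clubs.

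The base case $\beta = 0$ is trivial, and when $\beta$ has uncountable cofinality every condition of $\p_\beta$ has bounded support, so that case reduces to a smaller ordinal; thus there are really two cases. For the successor step $\beta = \gamma + 1$, given $p$ I would first apply the inductive hypothesis at $\gamma$ to obtain $q' \le p \restrict \gamma$ with $q' \restrict i = p \restrict i$ which is $(N, \p_\gamma, T^*)$-generic, and then treat the last coordinate. If $\p_\gamma$ forces that $\dot \q_\gamma$ is $T^*$-proper, then working below $q'$ we have $N[\dot G_{\p_\gamma}] \cap \omega_1 = N \cap \omega_1 = \delta$, $N[\dot G_{\p_\gamma}]$ lies in the relevant club, and (from $(N, \p_\gamma, T^*)$-genericity of $q'$) every node of $T^*_\delta$ is still $(N[\dot G_{\p_\gamma}], T^*)$-generic; so I may pick a $\p_\gamma$-name $\dot q(\gamma)$ for an $(N[\dot G_{\p_\gamma}], \dot \q_\gamma, T^*)$-generic extension of $p(\gamma)$ and put $q := q' {}^\frown \dot q(\gamma)$, verifying $(N, \p_{\gamma+1}, T^*)$-genericity via $N[\dot G_{\p_{\gamma+1}}] = N[\dot G_{\p_\gamma}][\dot G_{\dot \q_\gamma}]$. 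If instead $\dot \q_\gamma = \textrm{CU}(\omega_1 \setminus S)$ with $S$ the set of heights of an antichain $I \subseteq T^*$, the extra work is a direct verification in $V[\dot G_{\p_\gamma}]$ that $\textrm{CU}(\omega_1 \setminus S)$ has $(N[\dot G_{\p_\gamma}], \textrm{CU}(\omega_1 \setminus S), T^*)$-generic conditions. The crucial point is that, $I$ being an antichain of $T^*$ and $T^*$ having no stationary antichain, $S$ is non-stationary, so $\omega_1 \setminus S$ contains a club and $\delta \notin S$ for our $N$; hence a condition $f \in N[\dot G_{\p_\gamma}] \cap \textrm{CU}(\omega_1 \setminus S)$ can be extended by adjoining the pair $(\delta, \delta)$, and below the resulting condition one reruns the bookkeeping of Proposition 1.11 -- enumerate the nodes of $T^*_\delta$, the relevant names $\dot A$ and relations $R$, and at each step use $(N, T^*)$-genericity of the node together with the elementarity of the ``$B$''-set of nodes that some extension forces into $\dot A$ -- exploiting that the finite conditions below $\delta$ together with a fixed finite top part form an $N[\dot G_{\p_\gamma}]$-definable family rich enough to carry the argument.

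The heart of the proof, and where I expect the main obstacle, is the limit case $\cf(\beta) = \omega$. Fix $\langle i_n : n < \omega \rangle \in N$ increasing and cofinal in $\beta$ with $i_0 = i$, and run a Shelah--style fusion: build a descending sequence $\langle p_n : n < \omega \rangle$ with $p_0 = p$ and $p_{n+1} \restrict i_n = p_n \restrict i_n$, so that the $p_n$ converge coordinatewise to a condition $q$ with countable support contained in $N$, maintaining that $p_n \restrict i_n$ is $(N, \p_{i_n}, T^*)$-generic and forces $p_n \restrict [i_n, \beta)$ into $N[\dot G_{\p_{i_n}}]$. At stage $n$ I would dovetail two kinds of task: (i) meeting the $n$-th dense open subset of $\p_\beta$ in $N$, done by extending $p_n$ only above $i_n$ and using the inductive hypothesis at $i_{n+1}$ together with the genericity of $p_n \restrict i_n$; and (ii) handling the $n$-th triple $(x, \dot A, R)$ with $x \in T^*_\delta$, $\dot A \in N$ a $\p_\beta$-name for a subset of $T^*$, and $R \in \{ <_{T^*}, <_{T^*}^- \}$, done by passing to the tail forcing over $N[\dot G_{\p_{i_n}}]$: without disturbing coordinates below $i_n$, arrange that $p_{n+1} \restrict [i_n, \beta)$ reflects, into $N[\dot G_{\p_{i_n}}]$, any decision putting $x$ into $\dot A$, dropping to some $y \ R \ x$ -- necessarily a node of height below $\delta$, hence in $N$ -- by the same ``$B$''-set argument as in Proposition 1.11 but run inside $N[\dot G_{\p_{i_n}}]$, where $(N[\dot G_{\p_{i_n}}], T^*)$-genericity of $x$ is available from the maintained genericity of $p_n \restrict i_n$. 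Organizing the diagonalization so that the limit $q$ also forces each tail $q \restrict [i_n, \beta)$ to be $(N[\dot G_{\p_{i_n}}], \p_\beta / \dot G_{\p_{i_n}}, T^*)$-generic, the verification that $q$ itself is $(N, \p_\beta, T^*)$-generic then follows from the reformulation recorded after Definition 1.6: given $r \le q$ forcing $x \in \dot A$, pass to $\p_{i_n}$ for large $n$, where $r \restrict [i_n, \beta) \le q \restrict [i_n, \beta)$ forces $x \in \dot A$ and the tail-genericity of $q \restrict [i_n, \beta)$ produces $y \ R \ x$ and $\bar s \le r \restrict [i_n, \beta)$ forcing $y \in \dot A$, whence $(r \restrict i_n) {}^\frown \bar s \le r$ is the required $s$.

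The delicate point throughout the limit case is the orchestration: one has to keep the fusion sequence coordinatewise convergent, keep every initial segment $(N, \p_{i_n}, T^*)$-generic, meet all dense sets, \emph{and} push the $T^*$-genericity demands -- which concern names living in the full iteration $\p_\beta$, not in any $\p_{i_n}$ -- down to the tails uniformly in $n$, so that the single limit condition $q$ inherits $T^*$-genericity along with ordinary genericity. This is precisely where the machinery of \cite[Chapter IX]{shelah} and \cite{chaz} is needed; the remaining ingredients are a bookkeeping elaboration of the proof of Proposition 1.11 together with the observation that the $\textrm{CU}$ iterands cause no harm because the set $S$ in clause (2) is non-stationary.
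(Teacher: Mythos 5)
Your high-level plan---the two-coordinate inductive strengthening and the fusion argument at limits of cofinality $\omega$---is the right template and matches what the paper points to. Note, though, that the paper does not give a self-contained proof of Theorem 2.2: it cites Schlindwein's Theorem 21 and observes that the only use of Suslinness there is in case 2 of the successor step, where an application of Schlindwein's Lemma 8 is to be replaced by an application of Lemma 1.5. Your successor case with a $T^*$-proper iterand is fine, and the fusion outline is plausible.

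There is a genuine gap in your $\textrm{CU}$ successor case, and it sits exactly at the step the paper flags. You assert that ``$I$ being an antichain of $T^*$ and $T^*$ having no stationary antichain, $S$ is non-stationary.'' But ``$T^*$ has no stationary antichain'' is a ground-model fact, and nothing in the hypotheses guarantees it survives to $V^{\p_\gamma}$. $T^*$-properness of $\p_\gamma$ only yields, via Lemma 1.9, that $T^*$ is still Aronszajn in $V^{\p_\gamma}$; it does \emph{not} give the no-stationary-antichain property there. On the contrary, the entire purpose of the $\textrm{CU}(\omega_1 \setminus S)$ iterands (see the proof of Proposition 2.3) is to kill stationary antichains of $T^*$ that appear in intermediate extensions, so in general $S$ \emph{is} stationary in $V^{\p_\gamma}$ and your non-stationarity claim fails.

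The conclusion you need, $\delta \notin S$, is still true, but for a different reason---and this is exactly where Lemma 1.5 does its work. Having arranged $q' \le p \restrict \gamma$ to be $(N,\p_\gamma,T^*)$-generic, every $x \in T^*_\delta$ is forced by $q'$ to be $(N[\dot G_{\p_\gamma}],T^*)$-generic; that is the cargo being carried through the induction. Now suppose some $z \in I \cap T^*_\delta$. Since $I \in N[\dot G_{\p_\gamma}]$ and $z \in I$, applying the definition of $(N[\dot G_{\p_\gamma}],T^*)$-genericity of $z$ with the set $I$ and the relation $<_{T^*}$ produces $y <_{T^*} z$ with $y \in I$, contradicting that $I$ is an antichain. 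Hence $I \cap T^*_\delta = \emptyset$, i.e.\ $\delta \notin S$, and one may legally adjoin $(\delta,\delta)$. Replace your non-stationarity appeal with this argument. (A smaller slip: the remainder of that case is not a rerun of Proposition 1.11, which concerns $\omega_1$-generically closed posets; the relevant facts are Lemma 3.3 and Proposition 3.1, that $\textrm{CU}$ posets are strongly proper and strongly proper posets are $T^*$-proper.)
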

	
	This theorem was proven in \cite[Theorem 21]{chaz} under the assumption that 
	$T^*$ is an $\omega_1$-Suslin tree. 
	However, exactly the same proof works assuming instead that $T^*$ is an $\omega_1$-Aronszajn tree 
	with no stationary antichain. 
	In fact, the only place in the proof of 
	\cite[Theorem 21]{chaz} 
	where the Suslin property of the tree is used 
	is in case 2 of the successor case, which uses \cite[Lemma 8]{chaz}. 
	This lemma has the same conclusion as Lemma 1.5 above, but assumes instead that 
	the tree is Suslin. 
	Thus, the proof of Theorem 2.2 is exactly the same as the proof of \cite[Theorem 21]{chaz}, 
	except replacing its application of \cite[Lemma 8]{chaz} by our Lemma 1.5.

\begin{proposition}
	Let $\p$ be a $T^*$-proper forcing poset. 
	Then there exists a $\p$-name $\dot \q$ for a forcing poset such that 
	$\p * \dot \q$ is proper and $T^*$-preserving.
	\end{proposition}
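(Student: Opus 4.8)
The plan is to take $\dot{\q}$ to be (a $\p$-name for) the forcing iteration which shoots clubs through the complements of the height sets of certain maximal antichains of $T^*$, thereby killing all potential stationary antichains while preserving Aronszajnness. Concretely, working in $V^\p$, I would let $\dot{\q}$ be a countable support iteration $\langle \dot{\q}_i, \dot{\q}_{ij} : i \le \omega_1, j < \omega_1 \rangle$ of length $\omega_1$ with full support bookkeeping, where at stage $j$ one uses a name $\dot I_j$ for a maximal antichain of $T^*$ in the extension so far and forces with $\mathrm{CU}(\omega_1 \setminus S_j)$ where $S_j = \{\h_{T^*}(z) : z \in \dot I_j\}$. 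A standard bookkeeping argument ensures that every maximal antichain appearing in any intermediate extension (hence, since $\mathrm{CU}$-forcings are $\omega_1$-c.c.\ over the relevant model—or at least preserve $\omega_1$—every maximal antichain of the final model) gets its height-set made nonstationary at some stage, so that in $(\p * \dot{\q})$-extensions $T^*$ has no stationary antichain.

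The key steps, in order, are: (1) observe that each iterand $\mathrm{CU}(\omega_1 \setminus S_j)$ is, in its intermediate model, a forcing of the form required in clause (2) of Theorem 2.2, provided $\omega_1 \setminus S_j$ is stationary there — this is where I must check that a maximal antichain $I$ of $T^*$ cannot have a stationary height-set in a model where $T^*$ still has no stationary antichain, which is immediate since $I$ itself would be a stationary antichain; (2) invoke Theorem 2.2 applied to the two-step-and-beyond iteration $\p * \dot{\q}$, whose iterands at stages $<$ length of $\p$ are $T^*$-proper by hypothesis (clause (1)) and at later stages are of type (2), to conclude $\p * \dot{\q}$ is $T^*$-proper; (3) conclude via Lemma 1.9 that $\p * \dot{\q}$ forces $T^*$ is an $\omega_1$-Aronszajn tree; (4) run the bookkeeping argument to see $T^*$ has no stationary antichain in the final extension; and (5) note $T^*$-properness of $\p * \dot{\q}$ implies properness (every $(N,\p*\dot\q,T^*)$-generic condition is in particular $(N,\p*\dot\q)$-generic, so $\p * \dot\q$ is proper), hence $\p * \dot{\q}$ is proper and $T^*$-preserving as desired.

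The main obstacle is step (4), the bookkeeping: I must arrange that the iteration of length $\omega_1$ actually anticipates every maximal antichain of $T^*$ that exists in the final model $V^{\p * \dot{\q}}$, not merely those in intermediate models. The subtlety is that new maximal antichains could appear at the final stage. The standard fix is that $\dot{\q}$, being a countable support iteration of proper (indeed, once we know Theorem 2.2 applies, $T^*$-proper) forcings over $V^\p$, preserves $\omega_1$; and one argues that any maximal antichain $I$ of $T^*$ in $V^{\p * \dot{\q}}$, being of size $\le \omega_1$, is "essentially" captured: either $I$ is covered by an antichain appearing at some stage $< \omega_1$, or a reflection/properness argument shows that the height-set of $I$ meets the club added at a cofinal set of stages in a nonstationary set. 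Making this precise — most cleanly by using a suitable $\diamondsuit$-style bookkeeping on nice names together with properness to push down — is the one genuinely technical point; everything else is an assembly of Theorem 2.2, Lemma 1.9, and the elementary facts about $\mathrm{CU}(A)$ and two-step iterations recorded above.
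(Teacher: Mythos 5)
Your overall blueprint -- iterate $\mathrm{CU}$-type forcings over $V^\p$, invoke Theorem 2.2 and Lemma 1.9, and observe that $T^*$-properness implies properness -- matches the paper's strategy. But there is a genuine gap in step (4), exactly where you flag it, and your proposed repair does not work.

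The problem is that an iteration of length $\omega_1$ is simply too short. A countable support iteration of length $\omega_1$ over $V^\p$ has no useful chain condition, so an antichain $I \subseteq T^*$ living in $V^{\p * \dot\q}$ need not be (essentially) present at any stage $\gamma < \omega_1$; a nice name for such an $I$ can require information from unboundedly many stages. Your fallback -- ``a reflection/properness argument shows that the height-set of $I$ meets the club added at a cofinal set of stages in a nonstationary set'' -- is not a real argument: properness preserves stationary subsets of $\omega_1$, so it gives you no leverage to make a height-set nonstationary, and there is no $\diamondsuit$-style device that lets length $\omega_1$ anticipate subsets of $T^*$ of size $\omega_1$ that only appear at the end. (Also, a small but telling slip: $\mathrm{CU}(A)$ is not $\omega_1$-c.c.\ -- the conditions $\{(\alpha,\gamma)\}$ for a fixed $\alpha$ and varying $\gamma$ are pairwise incompatible -- so your parenthetical justification for ``catching'' final-model antichains fails; only the weaker claim of $\omega_1$-preservation is correct.)

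The paper resolves this by running the iteration to a length on which bookkeeping can actually close off. It first forces $\add(\omega_1) * \add(\omega_2)$ so that the intermediate model satisfies $2^\omega = \omega_1$ and $2^{\omega_1} = \omega_2$, and then iterates the $\mathrm{CU}$-forcings with countable support for $\omega_2$ further stages. Under CH each $\mathrm{CU}(\omega_1 \setminus S)$ has size $\omega_1$, and by Shelah's chain-condition theorem for countable support iterations of small proper forcings the tail iteration is $\omega_2$-c.c. That chain condition is precisely what guarantees that every antichain of $T^*$ in the final model appears (as a name) at some bounded stage, so standard bookkeeping handles it. You need some device of this kind -- length $\omega_2$ plus $\omega_2$-c.c.\ after arranging CH -- and your length-$\omega_1$ iteration cannot be salvaged by properness alone.
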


\begin{proof}
	For simplicity, we assume that there exists a regular cardinal $\kappa$ 
	such that $\p * \add(\omega_1) * \add(\omega_2)$ forces that $\kappa = \omega_2$. 
	If there does not, then the general case can be proven by a 
	straightforward argument involving splitting this forcing 
	into a maximal antichain of conditions deciding the value of $\omega_2$. 
	Observe that this three-step forcing iteration forces that $2^\omega = \omega_1$ 
	and $2^{\omega_1} = \omega_2$. 
	
	We define a forcing iteration 
	$\langle \p_i, \dot \q_j : i \le \kappa, \ j < \kappa \rangle$ as follows. 
	The first forcing in the iteration is $\p$, and the second forcing is 
	$\add(\omega_1) * \add(\omega_2)$. 	
	Assume that $2 \le \alpha < \kappa$ and $\p_\alpha$ is defined. 
	We consider a $\p_\alpha$-name $\dot I_\alpha$ for an antichain of $T^*$, and let 
	$\dot \q_\alpha$ be a $\p_\alpha$-name for the forcing poset 
	$\textrm{CU}(\omega_1 \setminus \dot S_\alpha)$, where $\dot S_\alpha$ is a $\p_\alpha$-name 
	for the set $\{ \h_{T^*}(z) : z \in \dot I_\alpha \}$. 
	Observe that after forcing with $\dot \q_\alpha$, $\dot I_\alpha$ is not 
	a stationary antichain. 
	At limit stages $\delta \le \kappa$, we define $\p_\delta$ by taking the limit 
	with countable support.

	This completes the definition. 
	Now $\p$ is $T^*$-proper, and since $\add(\omega_1)$ is forced to be $\omega_1$-closed, 
	it is forced to be $T^*$-proper as well by Corollary 1.12. 
	By Theorem 2.2, it follows that $\p_\kappa$ is $T^*$-proper. 
	Hence, $\p_\kappa$ preserves $\omega_1$, and by Lemma 1.9, $\p_\kappa$ forces that $T^*$ 
	is an $\omega_1$-Aronszajn tree.
	
	Now in $V^{\p_2}$, which models $2^\omega = \omega_1$ and $2^{\omega_1} = \omega_2$, 
	standard arguments show that 
	the tail of the iteration is forcing equivalent to a 
	countable support iteration of length $\omega_2$ 
	of the forcings $\textrm{CU}(\omega_1 \setminus \dot S_\alpha)$, 
	which each have size $\omega_1$. 
	By \cite[Lemma 2.4, Chapter VIII]{shelah}, this iteration is $\omega_2$-c.c. 
	Since $\kappa$ equals $\omega_2$ in $V^{\p_2}$, 
	it follows that $\p_\kappa$ forces that $\kappa = \omega_2$. 
	The chain condition also implies that we can arrange by standard bookkeeping 
	that all antichains of $T^*$ appearing in $V^{\p_\kappa}$ are handled at some stage. 
	It follows that $\p_\kappa$ forces that $T^*$ has no stationary antichain. 
	So in $V^{\p_\kappa}$, $T^*$ is an $\omega_1$-Aronszajn tree with no stationary antichain.

	Let $\dot \q$ be a $\p$-name for the tail of the iteration after forcing with $\p$. 
	Then $\p * \dot \q$ is forcing equivalent to $\p_\kappa$, which is proper and forces 
	that $T^*$ is an $\omega_1$-Aronszajn tree with no stationary antichain. 
	Thus, $\p * \dot \q$ is $T^*$-preserving, completing the proof.
	\end{proof}

\begin{definition}
	Assume that $T^*$ is an $\omega_1$-Aronszajn tree with no stationary antichain. 
	Define $\textsf{PFA}(T^*)$ to be the statement that for any proper $T^*$-preserving forcing poset $\p$, 
	whenever $\langle D_\alpha : \alpha < \omega_1 \rangle$ is a sequence of dense subsets of $\p$, 
	then there exists a filter $G$ on $\p$ 
	such that for all $\alpha < \omega_1$, $G \cap D_\alpha \ne \emptyset$.
	\end{definition}

In practice, we do not have a general way to verify that a proper forcing is $T^*$-preserving. 
Therefore, the following characterization of \textsf{PFA}($T^*$) will be useful.

\begin{proposition}
Assume that $T^*$ is an $\omega_1$-Aronszajn tree with no stationary antichain. 
Then $\textsf{PFA}(T^*)$ is equivalent to the statement that for 
any $T^*$-proper forcing poset $\p$, 
whenever $\langle D_\alpha : \alpha < \omega_1 \rangle$ is a sequence of dense subsets of $\p$, 
then there exists a filter on $G$ on $\p$ 
such that for all $\alpha < \omega_1$, $G \cap D_\alpha \ne \emptyset$.
\end{proposition}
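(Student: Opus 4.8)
The plan is to prove both implications, the nontrivial one using Proposition 2.4. For the direction that the $T^*$-proper statement implies $\textsf{PFA}(T^*)$, simply note that by Lemma 1.8 every proper $T^*$-preserving forcing poset is already $T^*$-proper; so the forcing axiom for $T^*$-proper posets applies verbatim to proper $T^*$-preserving posets, which is exactly the assertion of $\textsf{PFA}(T^*)$.

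For the converse, assume $\textsf{PFA}(T^*)$, let $\p$ be a $T^*$-proper forcing poset, and let $\langle D_\alpha : \alpha < \omega_1 \rangle$ be a sequence of dense subsets of $\p$; we must produce a filter on $\p$ meeting each $D_\alpha$. By Proposition 2.4, fix a $\p$-name $\dot \q$ such that $\p * \dot \q$ is proper and $T^*$-preserving. For each $\alpha < \omega_1$, let $E_\alpha := \{ (p,\dot r) \in \p * \dot \q : p \in D_\alpha \}$. The first thing I would check is that each $E_\alpha$ is dense in $\p * \dot \q$: given any $(p,\dot r)$, density of $D_\alpha$ in $\p$ provides $p' \le_\p p$ with $p' \in D_\alpha$, and then $(p',\dot r)$ is again a condition of $\p * \dot \q$, lies below $(p,\dot r)$, and belongs to $E_\alpha$. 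Then I would apply $\textsf{PFA}(T^*)$ to the proper $T^*$-preserving poset $\p * \dot \q$ together with $\langle E_\alpha : \alpha < \omega_1 \rangle$, obtaining a filter $H$ on $\p * \dot \q$ with $H \cap E_\alpha \ne \emptyset$ for all $\alpha$.

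Finally, let $G := \{ p \in \p : (\exists \dot r)\, (p,\dot r) \in H \}$ be the projection of $H$ to the first coordinate. It remains to verify that $G$ is a filter on $\p$ with $G \cap D_\alpha \ne \emptyset$ for each $\alpha$: directedness of $G$ follows from directedness of $H$, since a common extension of two conditions of $H$ projects to a common extension of their first coordinates; upward closure follows because if $(p,\dot r) \in H$ and $p \le p'$, then the condition $(p', \dot{1})$, where $\dot{1}$ is a name for the maximum of $\dot\q$, lies above $(p,\dot r)$ and hence belongs to $H$, so $p' \in G$; and if $(p,\dot r) \in H \cap E_\alpha$ then $p \in G \cap D_\alpha$ by the definition of $E_\alpha$. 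The step most deserving of attention is the passage from $\p$ to $\p * \dot\q$ via Proposition 2.4, since that is precisely what makes $\textsf{PFA}(T^*)$ applicable; but given that proposition nothing here presents a real obstacle, as the denseness of the $E_\alpha$ and the filter-projection bookkeeping are standard facts about two-step iterations.
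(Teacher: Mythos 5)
Your proof is correct and follows essentially the same route as the paper: the easy direction via Lemma~1.8, and the hard direction by passing to $\p * \dot\q$ via the proposition that constructs a proper $T^*$-preserving two-step iteration, lifting the dense sets, applying $\textsf{PFA}(T^*)$, and projecting the resulting filter back to $\p$. The paper states the filter-projection bookkeeping more tersely, but your explicit verification of directedness and upward closure is exactly what is implicitly being invoked there.
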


\begin{proof}
	Let \textsf{P} denote the statement which we are claiming to be equivalent to 
	\textsf{PFA}($T^*$). 
	Since any proper $T^*$-preserving forcing poset is $T^*$-proper by Lemma 1.8, 
	\textsf{P} obviously implies \textsf{PFA}($T^*$). 
	Conversely, assume \textsf{PFA}($T^*$). 
	Let $\p$ be a $T^*$-proper forcing poset and 
	$\langle D_\alpha : \alpha < \omega_1 \rangle$ a sequence of dense subsets of $\p$. 
	By Proposition 2.3, there exists a $\p$-name $\dot \q$ such that 
	$\p * \dot \q$ is proper and $T^*$-preserving.
	
	For each $\alpha < \omega_1$, 
	let $E_\alpha$ be the set of conditions $p * \dot q$ 
	in $\p * \dot \q$ such that $p \in D_\alpha$. 
	Then easily each $E_\alpha$ is dense in $\p * \dot \q$. 
	Applying \textsf{PFA}($T^*$), 
	fix a filter $G'$ on $\p * \dot \q$ such that for all $\alpha < \omega_1$, 
	$G' \cap E_\alpha \ne \emptyset$. 
	Define $G := \{ p \in \p : \exists \dot q \ \ p * \dot q \in G' \}$. 
	Then $G$ is a filter on $\p$ and for all $\alpha < \omega_1$, 
	$G \cap D_\alpha \ne \emptyset$.
	\end{proof}

\begin{thm}
	Suppose that $T^*$ is an $\omega_1$-Aronszajn tree with no stationary antichain 
	(for example, if $T^*$ is an $\omega_1$-Suslin tree). 
	Assume that there exists a supercompact cardinal $\kappa$. 
	Then there is a forcing poset which forces that $\kappa = \omega_2$ and 
	$\textsf{PFA}(T^*)$ holds.
	\end{thm}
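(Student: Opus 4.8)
The plan is to run the standard Laver-style preparation for obtaining a forcing axiom from a supercompact cardinal, adapted to the restricted class of $T^*$-proper forcings. First I would fix a Laver function $\ell : \kappa \to H(\kappa)$ for the supercompact cardinal $\kappa$; recall that for every set $x$ and every $\lambda \ge |\mathrm{tc}(x)|$ there is an elementary embedding $j : V \to M$ with critical point $\kappa$, ${}^\lambda M \subseteq M$, $j(\kappa) > \lambda$, and $j(\ell)(\kappa) = x$. Then I would define a countable support iteration $\langle \p_i, \dot\q_j : i \le \kappa, \ j < \kappa \rangle$ of length $\kappa$ in which, at each stage $\gamma$, the iterand $\dot\q_\gamma$ is chosen according to $\ell(\gamma)$: if $\ell(\gamma)$ happens to be a $\p_\gamma$-name for a pair consisting of a $T^*$-proper forcing poset of size less than $\kappa$ together with a sequence of $\omega_1$-many dense subsets, then we first force with that poset and then interleave the auxiliary club forcing $\mathrm{CU}(\omega_1 \setminus \dot S)$ (as in the proof of Proposition 2.3) to kill any stationary antichain of $T^*$ the iterand may have introduced; otherwise $\dot\q_\gamma$ is taken to be trivial, or again one of the $\mathrm{CU}$ forcings used for bookkeeping. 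In this way every iterand falls under case (1) or case (2) of Theorem 2.2, so $\p_\kappa$ is $T^*$-proper, hence by Lemma 1.9 it preserves that $T^*$ is an $\omega_1$-Aronszajn tree, and by the bookkeeping built into the $\mathrm{CU}$-stages together with the $\omega_2$-chain condition argument it forces that $T^*$ has no stationary antichain. Thus $\p_\kappa$ is proper and $T^*$-preserving, and a standard nice-name counting using the $\kappa$-c.c. shows $\p_\kappa$ forces $\kappa = \omega_2$.

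The core of the verification is then the usual reflection argument that $\p_\kappa$ forces $\textsf{PFA}(T^*)$, where by Proposition 2.6 it suffices to handle $T^*$-proper forcings rather than $T^*$-preserving ones. So let $G$ be $\p_\kappa$-generic, let $\dot\p \in V$ be a $\p_\kappa$-name for a $T^*$-proper forcing poset, and let $\langle \dot D_\alpha : \alpha < \omega_1 \rangle$ name a sequence of dense subsets. Using the $\kappa$-c.c. and the fact that $\p_\kappa$ forces $2^\omega = \omega_1$, one argues that $\dot\p$ and the sequence of dense sets are (forced equal to something) of hereditary size less than $\kappa$, hence captured by a name living in some $H(\lambda)$ with $\lambda < \kappa$; I would then pick $\lambda$ large and an embedding $j : V \to M$ with $\crit(j) = \kappa$, ${}^\lambda M \subseteq M$, $j(\kappa) > \lambda$, and $j(\ell)(\kappa)$ equal to the relevant name for the pair $(\dot\p, \langle \dot D_\alpha : \alpha<\omega_1\rangle)$. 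Then in $M$ the iteration $j(\p_\kappa)$ has, at stage $\kappa$, iterand forcing with (a tail containing) $\dot\p$ followed by the auxiliary $\mathrm{CU}$-forcing; and since $M$ is closed under $\lambda$-sequences, $T^*$-properness of $\dot\p$ is correctly computed in $M$, so Theorem 2.2 applies inside $M$ to show $j(\p_\kappa)$ is $T^*$-proper there. Working above a condition that forces stage $\kappa$ to be this $\dot\p$-step, in $V[G]$ one factors $j(\p_\kappa)$ as $\p_\kappa * \dot\p * \dot{\mathrm{CU}} * (\text{tail})$ and builds, by a $\sigma$-closed master-condition / tail-genericity argument (the tail is $\kappa$-closed in the relevant sense over $M[G * g]$, and has size bounded so that $2^\kappa$-many dense sets can be met), a generic $H$ for the tail over $M[G][g]$ meeting the $\omega_1$-many dense sets $j(\dot D_\alpha)[G] = \dot D_\alpha[G]$; lifting $j$ to $j : V[G] \to M[G][g][H]$ and pulling back the generic filter on the $\dot\p$-coordinate gives, in $V[G]$, a filter on $\p = \dot\p[G]$ meeting every $D_\alpha$. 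This is exactly the desired instance of the forcing axiom.

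The main obstacle I expect is the bookkeeping/closure interplay in the reflection step: one must verify simultaneously that (a) the auxiliary $\mathrm{CU}$-forcings interleaved to repair $T^*$ do not themselves destroy properness of the full iteration nor introduce a stationary antichain that is never subsequently killed — this is handled by Theorem 2.2 and the $\omega_2$-c.c. bookkeeping, just as in Proposition 2.3 — and that (b) in the lifting argument the tail of $j(\p_\kappa)$ above stage $\kappa$ is sufficiently closed over the intermediate model $M[G][g]$ that one can diagonalize against the $\omega_1$ dense sets to produce the generic $H$, while $M[G][g]$ still sees enough of $V[G]$ (via ${}^\lambda M \subseteq M$ and the $\kappa$-c.c. of $\p_\kappa$) for the pullback filter to be a genuine filter meeting the $D_\alpha$'s in $V[G]$. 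A secondary subtlety is making the Laver function guess $\p_\kappa$-names rather than ground-model objects, which is routine but must be stated carefully since the class of forcings quantified over in $\textsf{PFA}(T^*)$ is defined relative to $T^*$ and this definition must be absolute enough between $V$ and $M$; the $\lambda$-closure of $M$ and Lemma 1.8 together make this go through. Modulo these points the argument is the familiar one, and I would largely cite the analogous treatment in \cite{PFAS} for the $\textsf{PFA}(S)$ case, indicating precisely where Theorem 2.2 and Proposition 2.3 replace the Suslin-tree-specific preservation lemmas.
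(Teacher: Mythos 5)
Your proposal takes essentially the same approach as the paper: a countable support iteration of length $\kappa$ guided by a Laver function, interleaving $T^*$-proper iterands of size $<\kappa$ with $\mathrm{CU}$-forcings to destroy stationary antichains, using Theorem 2.2 for $T^*$-properness of the whole iteration, Lemma 1.9 and the bookkeeping for $T^*$-preservation, and the standard supercompact reflection argument. One small imprecision: you assert that ``nice-name counting using the $\kappa$-c.c.'' shows $\p_\kappa$ forces $\kappa=\omega_2$, but that only gives preservation of $\kappa$ and of cardinals above it; to collapse the cardinals strictly between $\omega_1$ and $\kappa$ you also need, as the paper notes, that $\omega_1$-closed collapses such as $\col(\omega_1,\lambda)$ are themselves $T^*$-proper (Corollary 1.12) and hence anticipated by the Laver function.
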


The proof is nearly identical to the usual construction of a model of \textsf{PFA}. 
We define a forcing iteration of length $\kappa$ with countable support, where at each stage 
we force with either a forcing which is $T^*$-proper and has size less than $\kappa$, 
or with a forcing of the form 
$\textrm{CU}(\omega_1 \setminus S)$, where $S = \{ \h_{T^*}(z) : z \in I \}$ for some 
antichain $I \subseteq T^*$. 
We use a Laver function as a bookkeeping mechanism to anticipate all possible $T^*$-proper 
forcings and all possible antichains of $T^*$ in the final model. 
By Theorem 2.2, such a forcing iteration is $T^*$-proper. 
Also, standard arguments show that it is $\kappa$-c.c., 
and by Lemma 1.9 and the construction, $T^*$ is still an $\omega_1$-Aronszajn tree with no stationary antichain 
after forcing with it. 
Since $\omega_1$-closed forcings are $T^*$-proper, and in particular 
the collapse $\col(\omega_1,\omega_2)$ is $T^*$-proper, the iteration forces that 
$\kappa$ equals $\omega_2$. 
Now the same elementary embedding argument used in the original proof for \textsf{PFA} 
works the same in this context to prove \textsf{PFA}($T^*$).

\section{Strong properness and club guessing}

In this section, we will prove that any strongly 
proper forcing is $T^*$-proper. 
It will follow that \textsf{PFA}($T^*$) implies the failure 
of weak club guessing. 
We then show by a more intricate argument that \textsf{PFA}($T^*$) implies the failure 
of very weak club guessing.

We recall some definitions. 
Let $\p$ be a forcing poset. 
For a set $N$, a condition $q \in \p$ is said to be 
\emph{strongly $(N,\p)$-generic} if 
for any dense subset $D$ of the poset $N \cap \p$, 
$D$ is predense below $q$. 
This property is equivalent to saying that for all $r \le q$, there is $u \in N \cap \p$ 
such that for all $v \le u$ in $N \cap \p$, $r$ and $v$ are compatible. 
We say that $\p$ is \emph{strongly proper} if for all large enough regular cardinals $\theta$ 
with $\p \in H(\theta)$, for club many countable $N \in P_{\omega_1}(H(\theta))$, 
for all $p \in N \cap \p$, there is $q \le p$ which is strongly $(N,\p)$-generic.

\begin{proposition}
	Suppose that $\p$ is a strongly proper forcing poset. 
	Then $\p$ is $T^*$-proper.
\end{proposition}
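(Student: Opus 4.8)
The plan is to verify the technical characterization of $(N,\p,T^*)$-genericity given after Definition 1.6, exploiting the stronger indestructibility of $(N,T^*)$-genericity established in Lemma 1.5. So fix a large regular $\theta$, and let $N \prec (H(\theta),\in,T^*,\p)$ be countable with the property that every $p \in N \cap \p$ has a strongly $(N,\p)$-generic extension; the set of such $N$ is club by strong properness and by elementarity (one can arrange $T^* \in N$ on a club). Let $\delta := N \cap \omega_1$. Given $p \in N \cap \p$, fix $q \le p$ strongly $(N,\p)$-generic. Since strongly $(N,\p)$-generic conditions are in particular $(N,\p)$-generic, it suffices to show $q$ is $(N,\p,T^*)$-generic. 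So fix $x \in T^*_\delta$ (which is $(N,T^*)$-generic by Lemma 1.5), a $\p$-name $\dot A \in N$ for a subset of $T^*$, a relation $R \in \{<_{T^*}, <_{T^*}^-\}$, and $r \le q$ forcing $x \in \dot A$; I must produce $y \ R \ x$ and $s \le r$ with $s \Vdash y \in \dot A$.

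First I would pass to the set $B$ of all $z \in T^*$ such that some condition in $N \cap \p$ forces $z \in \dot A$; then $B \in N$ by elementarity. The key claim is that $x \in B$, i.e. that \emph{some} condition in $N \cap \p$ forces $x \in \dot A$. This is where strong properness does the work: since $q$ is strongly $(N,\p)$-generic and $r \le q$, there is $u \in N \cap \p$ such that every $v \le u$ in $N \cap \p$ is compatible with $r$. The set $D$ of conditions in $N \cap \p$ deciding the statement ``$x \in \dot A$'' — wait, $x \notin N$, so this needs care: instead work inside $N$ with a name for the generic node of $T^*_\delta$, or more simply, note that the set $D_u$ of $v \le u$ in $N\cap\p$ such that $v$ forces ``$\check z \in \dot A$'' for some specific $z \in T^*\cap N$, together with those forcing the negation for all such $z$, is dense below $u$ in $N \cap \p$. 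Take $v \le u$ in this dense set; by strong genericity $v$ is compatible with $r$, and $r$ forces $x \in \dot A$, so $v$ cannot force $\check z \notin \dot A$ for every node — one concludes (using that $r$ and $v$ share an extension which forces $x \in \dot A$, and $x \in T^*_\delta$) that $B$ meets $T^*_\delta$ at $x$. Hence $x \in B$.

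Now $x \in B$ and $x$ is $(N,T^*)$-generic, so there is $y \ R \ x$ with $y \in B$; then $y \in N$ by elementarity (as $B \in N$ and $\h_{T^*}(y) < \delta$), and by definition of $B$ some $w \in N \cap \p$ forces $\check y \in \dot A$. It remains to get such a $w$ compatible with $r$: apply strong genericity once more along the dense set of conditions in $N \cap \p$ below $u$ that decide ``$\check y \in \dot A$'' — the one forcing $\check y \in \dot A$ is compatible with $r$, and taking $s \le r, w$ finishes. The main obstacle, and the point requiring the most care, is the $x \notin N$ issue in the previous paragraph: one must phrase the density/decision arguments over $N$ using only nodes of $T^*$ that lie in $N$ (equivalently, using a name for ``the'' generic top node), and then transfer back to the actual node $x \in T^*_\delta$ via the fact that $T^*_\delta \subseteq \overline{T^* \cap N}$ in the relevant sense — this is exactly the bookkeeping that makes ``$x \in B$'' go through, and it is the heart of why strong properness (rather than mere properness) suffices.
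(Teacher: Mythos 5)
Your overall scaffold is the same as the paper's (use strong properness to obtain $u \in N \cap \p$ with all extensions of $u$ in $N$ compatible with $r$, then find $y \ R \ x$ via $(N,T^*)$-genericity of $x$ applied to a suitable subset of $T^*$), but your set $B$ is defined incorrectly and the two ``patching'' arguments in the middle and end paragraphs have real gaps.

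Concretely: you define $B$ as the set of $z \in T^*$ such that some condition in $N \cap \p$ forces $z \in \dot A$. This set is parameterized by $N$, so it is \emph{not} in $N$ by elementarity; only the version defined using all of $\p$ is. More seriously, your ``key claim'' that $x \in B$ (some $N \cap \p$ condition forces $x \in \dot A$) is not true in general, and your attempted density argument does not prove it: taking $v \le u$ in $N \cap \p$ deciding whether various nodes $z \in N$ are in $\dot A$, and using compatibility with $r$, only tells you $v$ cannot force $\dot A = \emptyset$; it does not tell you anything about the node $x$, which is not in $N$ and cannot be mentioned by any $v \in N$. Finally, in your last paragraph, after obtaining $y \in B \cap N$ you need a witness $w$ compatible with $r$; you appeal to a dense set of conditions below $u$ deciding ``$y \in \dot A$'', and then assert ``the one forcing $\check y \in \dot A$'' exists, but nothing you have shown rules out that every such $v \le u$ in $N$ forces $y \notin \dot A$.

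The missing idea is to fold $u$ into the definition of $B$: take $B$ to be the set of $z \in T^*$ such that some condition \emph{below $u$} forces $z \in \dot A$. Then $B \in N$ because $u \in N$; $x \in B$ is immediate (any common extension of $u$ and $r$ is below $u$ and forces $x \in \dot A$); applying $(N,T^*)$-genericity of $x$ gives $y \ R \ x$ with $y \in B \cap N$; and elementarity (with parameters $y, u, \dot A, B$ all in $N$) produces a witness $v \le u$ in $N \cap \p$ forcing $y \in \dot A$, which is then automatically compatible with $r$ by the choice of $u$. This single change to $B$ makes both of the steps you struggle with go through trivially, and it is what strong properness (as opposed to mere properness) buys you.
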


\begin{proof}
	Fix a large enough regular cardinal $\theta$.  
	Let $N$ be a countable elementary substructure of $(H(\theta),\in,T^*,\p)$ such that 
	every member of $N \cap \p$ has a strongly $(N,\p)$-generic extension. 
	Let $\delta := N \cap \omega_1$. 
	Consider $p \in N \cap \p$. 
	Fix $q \le p$ which is strongly $(N,\p)$-generic. 
	Then in particular, $q$ is $(N,\p)$-generic.
	
	We claim that $q$ is $(N,\p,T^*)$-generic. 
	Let $x \in T^*_{\delta}$, $\dot A \in N$ a $\p$-name 
	for a subset of $T^*$, and $R \in \{ <_{T^*}, <_{T^*}^- \}$. 
	Assume that $r \le q$ and $r$ forces that $x \in \dot A$. 
	We will find $s \le r$ and $y \ R \ x$ such that $s$ forces that $y \in \dot A$.
	
	Since $q$ is strongly $(N,\p)$-generic, we can fix $u \in N \cap \p$ such that 
	for all $v \le u$ in $N \cap \p$, $r$ and $v$ are compatible. 
	Define $B$ as the set of $z \in T^*$ for which there exists a condition below $u$ 
	which forces that $z \in \dot A$. 
	Note that $B \in N$ by elementarity. 
	Since $u$ and $r$ are compatible, and any common extension of them forces that $x \in \dot A$, 
	it follows that $x \in B$. 
	As $x$ is $(N,T^*)$-generic, there exists $y \ R \ x$ such that $y \in B$. 
	Then $y \in N$. 
	By the definition of $B$ and elementarity, we can find $v \le u$ 
	in $N \cap \p$ which forces that $y \in \dot A$. 
	By the choice of $u$, $v$ and $r$ are compatible. 
	Fix $s \le v, r$. 
	Then $s \le r$ and $s$ forces that $y \in \dot A$.
\end{proof}

\begin{corollary}
	$\textsf{PFA}(T^*)$ implies that for any strongly proper forcing poset $\p$, 
	whenever $\langle D_\alpha : \alpha < \omega_1 \rangle$ is a sequence of dense 
	subsets of $\p$, then there exists a filter $G$ on $\p$ such that for all 
	$\alpha < \omega_1$, $G \cap D_\alpha \ne \emptyset$.
	\end{corollary}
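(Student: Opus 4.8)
The plan is to deduce this corollary directly from Proposition 3.1 together with the characterization of $\textsf{PFA}(T^*)$ given in Proposition 2.5. The statement to be proved is essentially the ``$T^*$-proper'' form of the forcing axiom restricted to strongly proper forcings, so very little work remains once the earlier machinery is in place.

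First I would observe that if $\p$ is a strongly proper forcing poset, then by Proposition 3.1, $\p$ is $T^*$-proper. Now apply Proposition 2.5: since $\textsf{PFA}(T^*)$ holds and $\p$ is $T^*$-proper, for any sequence $\langle D_\alpha : \alpha < \omega_1 \rangle$ of dense subsets of $\p$ there exists a filter $G$ on $\p$ meeting every $D_\alpha$. That is literally the conclusion we want, so the proof is just these two invocations chained together.

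The only conceivable obstacle is a bookkeeping one: Proposition 2.5 is stated as an equivalence with $\textsf{PFA}(T^*)$, so I should be careful to invoke the direction ``$\textsf{PFA}(T^*)$ implies the $T^*$-proper form,'' which is exactly the nontrivial direction proved there via Proposition 2.3. There is no genuine difficulty here — the substance was already carried out in Proposition 3.1 (strongly proper $\Rightarrow$ $T^*$-proper) and in the proof of Proposition 2.5 (absorbing a $T^*$-proper forcing into a proper $T^*$-preserving two-step iteration). So the write-up is a two-sentence proof, and I expect the referee to find nothing to object to.

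Concretely, the proof reads: Let $\p$ be strongly proper and $\langle D_\alpha : \alpha < \omega_1 \rangle$ a sequence of dense subsets of $\p$. By Proposition 3.1, $\p$ is $T^*$-proper. By Proposition 2.5 (using $\textsf{PFA}(T^*)$), there is a filter $G$ on $\p$ with $G \cap D_\alpha \ne \emptyset$ for all $\alpha < \omega_1$, as desired.
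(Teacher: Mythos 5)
Your proof is correct and is exactly the argument the paper intends, since Corollary 3.2 is stated immediately after Proposition 3.1 with no proof given: one applies Proposition 3.1 to see that $\p$ is $T^*$-proper and then invokes the equivalence in Proposition 2.5. Nothing further is needed.
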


Recall that a \emph{ladder system} is a sequence $\vec L = \langle L_\alpha : \alpha \in \lim(\omega_1) \rangle$ 
such that for each $\alpha$, $L_\alpha$ is a cofinal subset of $\alpha$ with order type $\omega$. 
Let $\vec L$ be such a ladder system. 
Consider the following club guessing properties of $\vec L$, which are ordered 
in decreasing strength:
\begin{itemize}
	\item for any club $C \subseteq \omega_1$, there exists $\alpha \in \lim(\omega_1)$ 
	such that $L_\alpha \subseteq C$;
	\item for any club $C \subseteq \omega_1$, there exists $\alpha \in \lim(\omega_1)$ 
	such that $L_\alpha \subseteq^* C$;\footnote{Throughout the paper, $A \subseteq^* B$ will 
	mean that $A \setminus B$ is finite.}
	\item for any club $C \subseteq \omega_1$, there exists $\alpha \in \lim(\omega_1)$ 
	such that $L_\alpha \cap C$ is infinite.
	\end{itemize}
We say that \emph{club guessing}, \emph{weak club guessing}, or \emph{very weak club guessing} 
holds if there exists a ladder system satisfying (1), (2), or (3) respectively.

It is well-known that \textsf{PFA} implies the failure of very weak club guessing 
(\cite[Theorem 3.6]{baumgartnerproper}). 
We will prove that the same holds under \textsf{PFA}($T^*$). 
We use the forcing poset $\textrm{CU}(A)$ described in Section 2.

\begin{lemma}
	For any club $D \subseteq \omega_1$, 
	the forcing poset $\textrm{CU}(D)$ is strongly proper.
	\end{lemma}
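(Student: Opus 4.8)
The plan is to show directly that $\textrm{CU}(D)$ is strongly proper by verifying that, for a club of countable $N \prec (H(\theta), \in, D, \textrm{CU}(D))$, every condition $f \in N \cap \textrm{CU}(D)$ has a strongly $(N,\textrm{CU}(D))$-generic extension. Let $\delta := N \cap \omega_1$; since $D$ is a club and $D \in N$, we have $\delta \in D$. Given $f \in N \cap \textrm{CU}(D)$, I claim that $f$ itself (or, if one prefers a condition that ``marks'' $\delta$, the condition $f \cup \{(\delta, \max(\delta, \text{something below the next point of }\dom(f)))\}$ when that makes sense, but in fact $f$ alone suffices) is strongly $(N,\textrm{CU}(D))$-generic. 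The key point is that for any $g \le f$ in $\textrm{CU}(D)$, I can split $g$ at $\delta$: write $g^- := g \restriction \delta$ and $g^+ := g \restriction (\delta, \omega_1)$, so $g = g^- \cup g^+$ and $g^- \in \textrm{CU}(D)$ has domain a finite subset of $D \cap \delta \subseteq N$, hence $g^- \in N \cap \textrm{CU}(D)$.

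The heart of the argument is then the compatibility analysis. Using the strong-properness characterization ``for all $r \le q$ there is $u \in N \cap \p$ such that every $v \le u$ in $N \cap \p$ is compatible with $r$'', I would take $u := g^- = g \restriction \delta$ for a given $g \le f$. Now suppose $v \le g^-$ with $v \in N \cap \textrm{CU}(D)$. Every element of $\dom(v)$ lies in $D \cap \delta$, so in particular every element of $\dom(v)$ is below $\delta$, and hence strictly below every element of $\dom(g^+) = \dom(g) \cap (\delta, \omega_1)$. I then need to check that $v \cup g$ is a condition: conditions (1) and (2) of Definition~2.1 must be verified. Condition (1) is clear since $\dom(v \cup g) = \dom(v) \cup \dom(g) \subseteq D$ is finite. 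For condition (2), the only new adjacent pairs $\alpha < \beta$ to worry about are those where $\alpha \in \dom(v)$, $\beta \in \dom(g^+)$, or where a gap in $\dom(v)$ gets filled by points of $\dom(g)$; but since $v \le g^- = g \restriction \delta$ we have $g \restriction \delta \subseteq v$, so on the interval below $\delta$ the function $v\cup g$ agrees with $v$, which is already a condition, while all points of $\dom(g^+)$ are above $\delta > \sup(\dom(v))$, and the value $g(\beta)$ for the least such $\beta$ satisfies $\beta \le g(\beta)$ automatically and $g(\beta) < \beta'$ for the next point. The one spot requiring a tiny bit of care is the ``seam'': for the least $\beta \in \dom(g^+)$ and the largest $\alpha \in \dom(v)$ with no point of $\dom(g^-)$ between them, we need $\alpha \le (v\cup g)(\alpha) < \beta$, which holds because $(v\cup g)(\alpha) = v(\alpha) < \delta \le \beta$. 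Thus $v \cup g \le v, g$, so $v$ and $g$ are compatible, as required.

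I expect the main obstacle — really the only subtle point — to be pinning down exactly why $g \restriction \delta$ lands inside $N$ and why extending it within $N$ does not interfere with the ``upper part'' $g \restriction (\delta,\omega_1)$; this is the standard finite-conditions-side-by-side trick, and the finiteness of domains together with the requirement $\alpha \le f(\alpha) < \beta$ (which keeps the filled-in value strictly below the next coordinate) is precisely what makes the two pieces amalgamate. Once this compatibility lemma is in hand, strong $(N,\textrm{CU}(D))$-genericity of $f$ follows immediately from the characterization quoted in the section, and since the set of suitable $N$ (countable elementary substructures containing $D$ and $\textrm{CU}(D)$) is club in $P_{\omega_1}(H(\theta))$, we conclude that $\textrm{CU}(D)$ is strongly proper. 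No use of $T^*$ is needed here; this lemma feeds into the later application via Proposition~3.1, which upgrades strong properness to $T^*$-properness.
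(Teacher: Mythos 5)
Your claim that $f$ alone is strongly $(N,\textrm{CU}(D))$-generic is false, and the gap is exactly at the step where you assert $g^- := g\restrict\delta \in N$. Having $\dom(g^-) \subseteq D \cap \delta \subseteq N$ is not enough; the \emph{values} of $g^-$ must also lie below $\delta$, and nothing forces this. If $\alpha$ is the largest element of $\dom(g)\cap\delta$ and the next element of $\dom(g)$ (if any) is $\ge\delta$, then clause (2) of Definition~2.1 only requires $\alpha \le g(\alpha) <$ that next point, so $g(\alpha)\ge\delta$ is permitted, and then $g^-\notin N$. Concretely, take $f=\emptyset$ and $g = \{(\gamma,\delta)\}$ for some $\gamma\in D\cap\delta$: this is a condition below $f$, yet no $u \in N\cap\p$ can witness the strong-genericity characterization at $g$, because for any such $u$ one may choose $\gamma'\in D\cap N$ with $\gamma < \gamma' < \delta$ and $\gamma' > \max(\dom(u)\cup\ran(u))$, set $v := u\cup\{(\gamma',\gamma')\}\in N\cap\p$, and observe that any common refinement $w$ of $v$ and $g$ would have $\gamma<\gamma'$ in $\dom(w)$ and hence $w(\gamma)<\gamma'<\delta = g(\gamma) = w(\gamma)$, a contradiction. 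So $f$ is not strongly $(N,\p)$-generic.

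The fix is precisely the step you flagged as optional: take $q := f\cup\{(\delta,\delta)\}$ (which is a condition since $\delta\in D$ and $f$ maps $\dom(f)\subseteq\delta$ into $\delta$). Once $\delta\in\dom(h)$ for every $h\le q$, clause (2) forces $h(\alpha)<\delta$ for all $\alpha<\delta$ in $\dom(h)$, so $h\restrict\delta$ really does land in $N\cap\p$. Your amalgamation argument from that point — that for $v\le h\restrict\delta$ in $N\cap\p$ the union $v\cup h$ is a condition, since $v$ maps into $\delta$ while $\dom(h)\setminus\delta$ sits entirely at or above $\delta$ — is correct and is exactly the paper's proof. The ``marking'' of $\delta$ is essential, not cosmetic.
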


\begin{proof}
	Let $\p := \textrm{CU}(D)$. 
	Consider any regular cardinal $\theta$ such that $\p \in H(\theta)$, and let 
	$N$ be a countable elementary substructure of $(H(\theta),\in,\p,D)$. 
	Let $\delta := N \cap \omega_1$. 
	Then $\delta \in D$. 
	Consider $f \in N \cap \p$. 
	Define $g := f \cup \{ (\delta,\delta) \}$. 
	If $h \le g$, then since $\delta \in \dom(h)$, for all $\alpha < \delta$ in $\dom(h)$, 
	$h(\alpha) < \delta$. 
	It easily follows that $h \restrict \delta \in N \cap \p$, 
	and for all $h_1 \le h \restrict \delta$ in 
	$N \cap \p$, $h_1 \cup h$ is a condition below $h_1$ and $h$.
\end{proof}

\begin{proposition}
	$\textsf{PFA}(T^*)$ implies that for any sequence $\langle A_\alpha : \alpha < \omega_1 \rangle$ 
	of infinite subsets of $\omega_1$, there exists a club $E \subseteq \omega_1$ 
	such that for all $\alpha < \omega_1$, $A_\alpha \setminus E$ is infinite.
\end{proposition}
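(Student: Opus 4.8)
The plan is to obtain $E$ as $\bigcup\{\dom f : f \in G\}$ for a suitably chosen filter $G$ on the forcing poset $\textrm{CU}(\omega_1)$ from Section 2, noting that $\omega_1$ is a club subset of itself. By Lemma 3.3 this poset is strongly proper, hence $T^*$-proper by Proposition 3.1, so by Corollary 3.2 it suffices to exhibit $\omega_1$ many dense subsets of $\textrm{CU}(\omega_1)$ whose being met by $G$ forces $E$ to be a club with $A_\alpha \setminus E$ infinite for every $\alpha < \omega_1$.

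First I would set up the mechanism for keeping ordinals out of $E$. Say that $\eta$ is \emph{$f$-excludable} if $\eta \notin \dom f$, there is some $\gamma \in \dom f$ below $\eta$, and $\eta \le f(\gamma)$ for the largest such $\gamma$. The crucial point, proved by a short induction following condition (2) of Definition 2.1 along an extension $g \le f$, is that $f$-excludability implies $g$-excludability; in particular an $f$-excludable $\eta$ never enters $\dom g$ for any $g \le f$, so if $f \in G$ then $\eta \notin E$. For $\alpha < \omega_1$ and $n < \omega$ let $D_{\alpha,n}$ be the set of $f$ for which at least $n$ elements of $A_\alpha$ are $f$-excludable. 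Density of $D_{\alpha,n}$ is exactly where the hypothesis that $A_\alpha$ is infinite is used: given $f$, choose $n$ elements of $A_\alpha$ above $\max(\ran f) + 1$ and extend $f$ by a single new top point whose value is the largest of these, which makes all $n$ of them $f$-excludable. If $G$ meets every $D_{\alpha,n}$, then $A_\alpha \setminus E$ is infinite for all $\alpha$.

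Next I would add the standard dense sets for $\textrm{CU}$ that force $E$ to be a club: for $\gamma < \omega_1$ the (obviously dense) set $U_\gamma := \{f : \max\dom f > \gamma\}$, ensuring $E$ is unbounded; and for each limit ordinal $\lambda < \omega_1$ the set $D_\lambda := \{f : \lambda \in \dom f \text{ or } (\exists \beta \in \dom f)(\beta < \lambda \wedge f(\beta) \ge \lambda)\}$. A routine case split shows $D_\lambda$ is dense: if its second disjunct fails, then either $\lambda$ can be adjoined to $\dom f$ directly, or, when $\dom f \cap \lambda = \emptyset$, a small new point with value $\lambda$ can be. If $G$ meets every $D_\lambda$, then $E$ is closed: supposing $\lambda$ is a limit point of $E$ and picking $f \in G \cap D_\lambda$ with $\lambda \notin \dom f$, one checks the witnessing $\beta$ must actually be $\max(\dom f \cap \lambda)$, and then the excludability argument shows no extension of $f$ can put a point into $(\beta, \lambda]$, so $E \cap \lambda \subseteq \beta + 1$ is bounded below $\lambda$ — a contradiction. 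Altogether this is $\omega_1$ many dense sets, so Corollary 3.2 produces a filter $G$ meeting all of them, and $E := \bigcup\{\dom f : f \in G\}$ is as required.

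The step I expect to demand the most care is the preservation of $f$-excludability under extension and the closure argument built on it; the remaining density verifications are the routine bookkeeping typical of finite-condition club forcings, and nothing further about $T^*$ is needed beyond the already-established $T^*$-properness of $\textrm{CU}(\omega_1)$.
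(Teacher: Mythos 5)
Your overall architecture matches the paper's: apply $\textsf{PFA}(T^*)$ to $\textrm{CU}(\omega_1)$ (strongly proper by Lemma 3.3, hence $T^*$-proper, and covered by Corollary 3.2) and recover Baumgartner's argument by choosing suitable dense sets. The $f$-excludability mechanism and its preservation under extension are correct, as is the closure argument via $D_\lambda$. However, your density argument for $D_{\alpha,n}$ has a genuine gap: you "choose $n$ elements of $A_\alpha$ above $\max(\ran f)+1$", which requires $A_\alpha$ to be cofinal in $\omega_1$, but the proposition's hypothesis is only that each $A_\alpha$ is infinite. Bounded infinite $A_\alpha$ are not a corner case here --- they are the main case. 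The very next thing the paper does with this proposition is deduce the failure of weak club guessing, where one applies it to a ladder system $\langle L_\alpha \rangle$; each $L_\alpha$ has order type $\omega$ and is bounded below $\omega_1$, so your density argument would never get off the ground for the sets that matter.

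The gap is fixable, but it requires adding a point \emph{inside} a gap of $f$ rather than a new top point. Concretely: given $f$ and $n$, either at least $n$ elements of $A_\alpha$ are already $f$-excludable (the $f$-excludable ordinals are exactly $\bigcup_{\beta \in \dom f}(\beta, f(\beta)]$), or else infinitely many elements of $A_\alpha$ lie outside $\dom f \cup \bigcup_{\beta\in\dom f}(\beta,f(\beta)]$. Since there are only finitely many maximal intervals complementary to that set (the ``gaps'' of $f$, including the one above $\max(\ran f)$ and the one below $\min(\dom f)$), one such gap $(\mu,\nu)$ contains $n+1$ elements $\eta_0 < \cdots < \eta_n$ of $A_\alpha$; then $g := f \cup \{(\eta_0,\eta_n)\}$ is a legal condition (check condition (2) of Definition 2.1 against the neighboring domain points of $f$, using $\mu < \eta_0$ and $\eta_n < \nu$) and makes $\eta_1,\dots,\eta_n$ $g$-excludable. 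With this replacement your proof is complete; as written, the step ``choose elements above $\max(\ran f)+1$'' fails exactly on the inputs the proposition is designed for.
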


To prove the proposition, we apply $\textsf{PFA}(T^*)$ to the forcing poset 
$\textrm{CU}(\omega_1)$, which is strongly proper and hence $T^*$-proper. 
The argument is essentially the same as that of \cite[Theorem 3.4]{baumgartnerproper}, which drew 
the same conclusion from $\textsf{PFA}$.

It easily follows from the last proposition that 
\textsf{PFA}($T^*$) implies the failure of weak club guessing. 
But we can do better.

\begin{thm}
	$\textsf{PFA}(T^*)$ implies the failure of very weak club guessing.
	\end{thm}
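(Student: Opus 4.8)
The plan is to fix a ladder system $\vec L = \langle L_\alpha : \alpha \in \lim(\omega_1)\rangle$ and build a $T^*$-proper forcing poset $\p$ that adds a club $E \subseteq \omega_1$ which eventually escapes every $L_\alpha$, i.e. such that $L_\alpha \cap E$ is finite for each $\alpha$. Applying $\textsf{PFA}(T^*)$ to $\p$ with an appropriate $\omega_1$-sequence of dense sets then produces such a club in the ground model, witnessing that $\vec L$ does not satisfy very weak club guessing. Since $\vec L$ was arbitrary, very weak club guessing fails. This is exactly the strategy of \cite[Theorem 3.6]{baumgartnerproper}, so the work is in adapting the forcing so that it is $T^*$-proper rather than merely proper.

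The natural forcing to use is a finite-condition poset for adding a club $E$ together with a ``sealing'' component that, for each $\alpha \in \lim(\omega_1)$ put into $E$, records a finite initial segment of $L_\alpha$ above which $L_\alpha$ must avoid $E$; more precisely a condition is a finite approximation $(e, g)$ where $e$ is a finite closed set of countable ordinals (a promise about $E$) and $g$ is a finite partial function assigning to $\alpha \in e \cap \lim(\omega_1)$ a bound $g(\alpha) < \alpha$, with the coherence requirement that for $\alpha \in e$ no further element of $e$ above $g(\alpha)$ lands in $L_\alpha$. One checks the usual amalgamation and density facts: for each $\beta < \omega_1$ there are densely many conditions with $\sup e > \beta$, so the generic $E$ is unbounded (and closure is built into conditions), and for each $\alpha$ densely many conditions put $\alpha$ into $e$ or put a point above $\alpha$ into $e$, pinning down the bound so that $L_\alpha \cap E$ is finite. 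Properness is the standard argument: given a countable $N \prec H(\theta)$ with $\delta = N \cap \omega_1$, a condition $(e,g) \in N$ is extended by adding $\delta$ to $e$ (legitimate since $L_\delta$ is cofinal in $\delta$ and below every finite bound only finitely much of it appears, and all relevant earlier bounds are $< \delta$), and this extension is $(N,\p)$-generic because any condition below it restricts into $N$.

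The main obstacle, and the heart of the proof, is upgrading properness to $T^*$-properness: given an $(N,T^*)$-generic node $x \in T^*_\delta$, a $\p$-name $\dot A \in N$ for a subset of $T^*$, a relation $R \in \{<_{T^*}, <^-_{T^*}\}$, and a condition $r \le q$ forcing $x \in \dot A$, I must find $y \mathrel{R} x$ and $s \le r$ forcing $y \in \dot A$. The difficulty is that $\p$ is not strongly proper, so I cannot simply appeal to Proposition 3.1; instead I expect to exploit that the generic condition $q$ obtained above has a canonical ``residue'' in $N$ (namely $q \restriction \delta$ together with the requirement that the next point of $E$ be $\ge \delta$), and that any $r \le q$ is compatible with every extension in $N$ of that residue that does not add new points of $E$ below $\delta$. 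Letting $B$ be the set of $z \in T^*$ forced into $\dot A$ by some condition in $N$ extending the residue, one gets $B \in N$ and $x \in B$; applying $(N,T^*)$-genericity of $x$ yields $y \mathrel{R} x$ in $B \cap N$, hence a witnessing condition $v \in N$, and a final amalgamation of $v$ with $r$ (possible by the compatibility just noted, since $v$ lives below $\delta$) gives the desired $s$. Making the compatibility-with-the-residue claim precise — in particular verifying that the coherence requirements of $\p$ never force an incompatibility between $r$ and such a $v$ — is the step I would expect to require the most care.
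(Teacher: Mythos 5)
Your overall strategy is the paper's: fix $\vec L$, build a $T^*$-proper forcing adding a club that meets each $L_\alpha$ in a finite set, and apply $\textsf{PFA}(T^*)$.  Your poset is also essentially the paper's (a Baumgartner-style finite club approximation paired with a finite ``sealing'' component), modulo a minor issue: a finite set $e$ is automatically closed, so ``closure is built into conditions'' does not by itself make the generic union closed — one needs a mechanism like the $\alpha \le f(\alpha) < \beta$ constraint of $\textrm{CU}(\omega_1)$, which the paper uses.  That is a small fix.  The genuine gap is in the properness and $T^*$-properness arguments, and it is exactly the step you flag as ``requiring the most care.''

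The claim that ``any condition below $q$ restricts into $N$'' and hence $q$ is $(N,\p)$-generic is the argument for a \emph{strongly} proper forcing, and this forcing is not strongly proper.  If $r \le q$ and $\alpha \in x_r \setminus \delta$ is a sealed ordinal above $\delta$, then $L_\alpha \cap \delta$ is a nonempty finite subset of $\delta$; a condition $v \in N$ extending the residue $r \restriction N$ may freely place new limit points of its domain into $L_\alpha \cap \delta$, and any such $v$ is incompatible with $r$.  So the residue of $r$ in $N$ does not carry enough information to decide compatibility with $r$, and your set $B$ is either empty of $x$ (if its witnesses must lie in $N$, since no condition in $N$ can force $x \in \dot A$) or admits witnesses $v$ that cannot be amalgamated with $r$.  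Either way, the argument breaks exactly where you predicted it would require care.

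The paper handles this with a substantially more involved construction.  Rather than reflecting only the residue of $r$, it reflects the entire configuration $(N,r)$: it defines a set $B_0 \in N$ of nodes $z$ witnessed by a countable model $M$ (with $M \cap \omega_1 = \h_{T^*}(z)$) together with a condition $s$ whose trace on $M$ matches the trace of $r$ on $N$.  Applying $(N,T^*)$-genericity to $x$ produces $z <_{T^*} x$ with witnesses $M, s \in N$.  It then builds, inside $M$, a carefully chosen condition $v$ whose $f$-component contains pairs $(\alpha_\beta, \beta)$ for each ``problematic'' $\beta$ (roughly, the members of $\bigcup\{L_\alpha \cap \delta_0 : \alpha \in x_r \setminus \delta\}$); the $\textrm{CU}$ ordering then \emph{forces} every extension of $v$ in $M$ to avoid those $\beta$'s, so that (I) $v$ is compatible with $s$ and (II) every $M$-extension of $v$ is compatible with $r$.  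Finally it applies $(M,T^*)$-genericity to $z$ (a second round of genericity, now inside $M$) to produce $y \mathrel{R} z$ forced into $\dot A$ by some $w \le v$ in $M \cap D$, and amalgamates $w$ with $r$; the composition $y \mathrel{R} z <_{T^*} x$ gives $y \mathrel{R} x$.  The two-model, two-step structure and the blocking trick for $v$ are the content you would need to supply, and they are not implicit in your sketch.
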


\begin{proof}
	Fix a ladder system 
	$\vec L = \langle L_\alpha : \alpha \in \lim(\omega_1) \rangle$, and we will prove that 
	under \textsf{PFA}($T^*$), $\vec L$ is not a very weak club guessing sequence. 
	We may assume without loss of generality that for all $\alpha \in \lim(\omega_1)$, 
	$\alpha \setminus L_\alpha$ is cofinal in $\alpha$. 
	For if all ladder systems with this property are not very weak club guessing sequences, 
	then it is easy to argue that the same is true for all ladder systems.
	
	Define $\p$ as the forcing poset whose conditions are pairs $(f,x)$ such that 
	$f \in \textrm{CU}(\omega_1)$ and $x \subseteq \lim(\omega_1)$ is finite, 
	ordered by $(g,y) \le (f,x)$ if $x \subseteq y$, $f \subseteq g$, and for all 
	$\alpha \in x$, 
	$$
	(\dom(g) \setminus \dom(f)) \cap L_\alpha \cap \lim(\omega_1) = \emptyset.
	$$
	We will write $p = (f_p,x_p)$ for any $p \in \p$.

	Assuming that $\p$ preserves $\omega_1$, straightforward arguments prove that 
	$\p$ introduces a club $C \subseteq \omega_1$ such that for all $\alpha \in \lim(\omega_1)$, 
	$C \cap L_\alpha$ is finite, namely, the club of limit ordinals which belong to 
	the club set $\bigcup \{ \dom(f_p) : p \in \dot G_\p \}$. 
	So assuming that $\p$ is $T^*$-proper, a routine selection of dense sets shows that under 
	\textsf{PFA}($T^*$), $\vec L$ is not a very weak club guessing sequence. 

	It remains to prove that $\p$ is $T^*$-proper. 
	Fix a regular cardinal $\theta$ such that $T^*$ and $\p$ are members of $H(\theta)$. 
	Let $\mathcal X$ denote the set of countable elementary substructures of 
	$(H(\theta),\in,T^*,\p)$. 
	Observe that by Lemma 1.5, if $M \in \mathcal X$, then for all $x \in T^*_{M \cap \omega_1}$, 
	$x$ is $(M,T^*)$-generic.
	
	Let $N$ be a countable elementary substructure of $(H(\theta),\in,T^*,\p,\mathcal X)$. 
	Observe that $N \in \mathcal X$. 
	Let $\delta := N \cap \omega_1$. 	
	Consider $p \in N \cap \p$. 
	Define $q$ by $f_q := f_p \cup \{ (\delta,\delta) \}$ and $x_q := x_p$. 
	Then $q$ is a condition and $q \le p$. 
	We claim that $q$ is $(N,\p,T^*)$-generic. 
	So let $D \in N$ be a dense open subset of $\p$, $x \in T^*_\delta$, $\dot A \in N$ a $\p$-name 
	for a subset of $T^*$, and $R \in \{ <_{T^*}, <_{T^*}^- \}$.
	
	Let $r \le q$. 
	By extending further if necessary, we may assume without loss of generality 
	that $r \in D$ and $r$ decides whether or not $x \in \dot A$. 
	If $r$ forces that $x \notin \dot A$, then 
	replace $\dot A$ in what follows with the canonical $\p$-name for $T^*$. 
	Hence, without loss of generality, we may assume that $r$ forces that $x \in \dot A$. 
	Observe that since $\delta \in \dom(f_r)$, $f_r \restrict \delta$ maps into $\delta$, 
	and so is a member of $N$.

	Let $B_0$ denote the set of $z \in T^*$ for which there exists some $M \in \mathcal X$ such that 
	$D$ and $\dot A$ are members of $M$, 
	$M \cap \omega_1 = \h_{T^*}(z)$, and there exists some condition $s \in D$ such that 
	$f_s \restrict (M \cap \omega_1) = f_r \restrict \delta$, $M \cap \omega_1 \in \dom(f_s)$, 
	$x_s \cap M = x_r \cap \delta$, and $s$ forces that $z \in \dot A$. 
	Since all of the parameters mentioned in the definition of $B_0$ are members of $N$, 
	$B_0 \in N$ by elementarity. 
	Also, $x \in B_0$ as witnessed by $N$ and $r$.
	
	Since $x$ is $(N,T^*)$-generic, 
	we can fix $z <_{T^*} x$ which is in $B_0$. 
	Let $M$ and $s$ witness that $z \in B_0$. 
	Since $z \in N$, we may assume by elementarity that $M$ and $s$ are in $N$. 
	Let $\delta_0 := M \cap \omega_1$. 
	Note that since $\h_{T^*}(z) = M \cap \omega_1 = \delta_0$, 
	we have that $z$ is $(M,T^*)$-generic. 
	As $\delta_0 \in \dom(f_s)$, $f_s \restrict \delta_0 = f_r \restrict \delta$ maps 
	into $\delta_0$, and hence is a member of $M$.  
	(As a point of clarity, we do not claim that $r$ and $s$ are compatible; the argument 
	in what follows is more subtle.)
	
	Fix a limit ordinal $\xi < \delta_0$ which is large enough so that:
	\begin{itemize}
		\item $\dom(f_r) \cap \delta = \dom(f_s) \cap \delta_0$ is a subset of $\xi$;
		\item $\ran(f_r \restrict \delta) = \ran(f_s \restrict \delta_0) \subseteq \xi$;
		\item $x_r \cap \delta = x_s \cap \delta_0$ is a subset of $\xi$;
		\item for all $\beta \in x_r \setminus \delta$, $L_\beta \cap \delta_0 \subseteq \xi$;
		\item for all $\beta \in x_s \setminus (\delta_0 + 1)$, $L_\beta \cap \delta_0 \subseteq \xi$.
		\end{itemize}
	This is possible since $\delta_0$ is a limit of limit ordinals and all of the finitely many sets 
	mentioned above are finite subsets of $\delta_0$.
		
	Define $X$ as the set of $\beta$ satisfying:
	\begin{enumerate}
		\item[(a)] $\beta < \xi$ is a limit ordinal;
		\item[(b)] for all $\gamma \in \dom(f_r) \cap \delta$, it is not the case that 
		$\gamma \le \beta \le f_r(\gamma)$ (in particular, 
		$\beta \notin \dom(f_r) \cap \delta$);
		\item[(c)] there exists $\alpha \in x_r \setminus \delta$ 
		such that $\beta \in L_\alpha$.
		\end{enumerate}
	Observe that since $f_r \restrict \delta = f_s \restrict \delta_0$, (b) implies that 
	for all $\gamma \in \dom(f_s) \cap \delta_0$, it is not the case that 
	$\gamma \le \beta \le f_s(\gamma)$ (in particular, $\beta \notin \dom(f_s) \cap \delta_0$). 
	Note that $X$ is a finite subset of $\delta_0$, and hence is a member of $M$.
	
	For each $\beta \in X$, choose $\alpha_\beta < \beta$ large enough 
	so that:
	\begin{enumerate}
		\item $x_r \cap \beta$, $\dom(f_r) \cap \beta$, 
		$\ran(f_r) \cap \beta$, and $X \cap \beta$ are subsets of $\alpha_\beta$;
		\item for all $\alpha \in x_r \cup x_s$ 
		strictly larger than $\beta$, $L_\alpha \cap \beta \subseteq \alpha_\beta$;
		\item $\alpha_\beta \notin L_\beta$.
	\end{enumerate}
	This is possible since the finitely many 
	sets described in (1) and (2) are finite, 
	and by our assumption about the ladder system,  
	$\beta \setminus L_\beta$ is cofinal in $\beta$. 
	Observe that since 
	$f_r \restrict \delta = f_s \restrict \delta_0$ 
	and $x_r \cap \delta = x_s \cap \delta_0$, 
	(1) implies that 
	$x_s \cap \beta$, $\dom(f_s) \cap \beta$, and 
	and $\ran(f_s) \cap \beta$ are subsets of 
	$\alpha_\beta$. 
	Also note that by the definition of $X$, for all $\gamma \in \dom(f_r) \cap \beta = \dom(f_s) \cap \beta$, 
	$f_r(\gamma) = f_s(\gamma) < \beta$, and therefore $f_r(\gamma) = f_s(\gamma) < \alpha_\beta$.
	
	Define $v = (f_v,x_v)$ by letting $x_v := x_r \cap \delta$ 
	and 
	$$
	f_v := (f_{r} \restrict \delta) \cup \{ (\alpha_\beta,\beta) : \beta \in X \}.
	$$
	Then also $x_v = x_s \cap \delta_0$ and 
	$f_v = (f_{s} \restrict \delta_0) \cup \{ (\alpha_\beta,\beta) : \beta \in X \}$. 
	Clearly, $v$ is in $M$. 
	For $\beta \in X$, 
	for all $\gamma \in \dom(f_r) \cap \beta$, $f_r(\beta) < \alpha_\beta$, and also 
	if $\beta_1 < \beta_2$ are in $X$, then $\beta_1 < \alpha_{\beta_2}$. 
	It easily follows that $f_v \in \textrm{CU}(\omega_1)$, and hence $v \in \p$. 
	So $v \in M \cap \p$.
	
	We make two claims about $v$: (I) $v$ and $s$ are compatible, and (II) 
	any extension of $v$ in $M$ is compatible with $r$. 
	To see that $v$ and $s$ are compatible, let 
	$t := (f_v \cup f_s,x_v \cup x_s)$. 
	Since $f_s \restrict \delta_0 \subseteq f_v \in M$ and for all 
	$\gamma \in \dom(f_v) \cap \delta_0$, $f_v(\gamma) < \delta_0$, we have that 
	$f_v \cup f_s$ is in $\textrm{CU}(\omega_1)$. 
	Hence, $t$ is a condition. 
	To see that $t \le v$, we only need to show that 
	for all $\alpha \in x_v$, the set $\dom(f_t) \setminus \dom(f_v)$ is disjoint from 
	$L_\alpha \cap \lim(\omega_1)$. 
	But $x_v \subseteq \delta_0$, whereas 
	$\dom(f_t) \setminus \dom(f_v) \subseteq \omega_1 \setminus \delta_0$, so this is immediate.
		
	To show that $t \le s$, it is enough to show that for all 
	$\alpha \in x_s$, 
	$$
	(\dom(f_v) \setminus \dom(f_s)) \cap L_\alpha 
	\cap \lim(\omega_1) = \emptyset.
	$$
	So let $\alpha \in x_s$ and 
	let $\zeta$ be a limit ordinal in 
	$\dom(f_v) \setminus \dom(f_s)$, and we will prove that  
	$\zeta \notin L_\alpha$. 
	By the definition of $v$, 
	$\zeta = \alpha_\beta$ for some $\beta \in X$. 
	By property (2) in the definition of $\alpha_\beta$, 
	if $\beta < \alpha$ then 
	$L_\alpha \cap \beta \subseteq \alpha_\beta$, and thus 
	$\zeta \notin L_\alpha$. 
	So assume that $\alpha \le \beta$. 
	If $\alpha = \beta$, then by (3) in the definition 
	of $\alpha_\beta$, $\zeta \notin L_\alpha$. 
	Finally, if $\alpha < \beta$, then 
	by (1), $\alpha < \zeta$. 
	In any case, $\zeta \notin L_\alpha$. 
	This completes the proof that $v$ and $s$ 
	are compatible.
	
	Now we claim that for all $w \le v$ in $M \cap \p$, 
	$w$ and $r$ are compatible. 
	Fix $w \le v$ in $M \cap \p$. 
	Define $t := (f_w \cup f_r,x_w \cup x_r)$. 
	Since $f_r \restrict \delta \subseteq f_v \subseteq f_w$, 
	and for all $\gamma \in \dom(f_w)$, 
	$f_w(\gamma) < \delta$, we have that 
	$f_t \in \mathrm{CU}(\omega_1)$. 
	So $t$ is a condition. 
	As $x_w \subseteq \delta$ and 
	$\dom(f_t) \setminus \dom(f_w) \subseteq \omega_1 \setminus \delta$, 
	it easily follows that $t \le w$.
	
	It remains to show that $t \le r$. 
	It suffices to prove that for all $\alpha \in x_r$, 
	$\dom(f_t) \setminus \dom(f_r)$ is disjoint from $L_\alpha \cap \lim(\omega_1)$. 
	Fix $\alpha \in x_r$. 
	Let $\zeta \in \dom(f_t) \setminus \dom(f_r)$ be a 
	limit ordinal, and we will show that $\zeta \notin L_\alpha$. 
	Note that $\zeta \in \dom(f_w) \setminus \dom(f_r)$. 
	
	We split the argument into two cases. 
	First, assume that $\alpha < \delta$. 
	Then $\alpha \in x_r \cap \delta \subseteq x_v$. 
	Suppose for a contradiction that $\zeta \in L_\alpha$. 
	Then we cannot have $\zeta \in \dom(f_w) \setminus \dom(f_v)$, since $w \le v$. 
	Thus, $\zeta \in \dom(f_v)$. 
	As $\zeta \notin \dom(f_r)$, by the definition of 
	$v$ it must be the case that 
	$\zeta = \alpha_\beta$ for some $\beta \in X$. 
	Since $\zeta \in L_\alpha$, $\zeta < \alpha$. 
	If $\alpha < \beta$, then by (1) in the definition of $\alpha_\beta$, 
	we have that $\alpha < \alpha_\beta = \zeta$, which is false. 
	By (3) in the definition of $\alpha_\beta$, 
	if $\alpha = \beta$ then $\zeta \notin L_\alpha$. 
	Thus, $\beta < \alpha$. 
	But then by (2) in the definition of $\alpha_\beta$, 
	$L_\alpha \cap \beta \subseteq \alpha_\beta = \zeta$, so $\zeta \notin L_\alpha$. 
	This contradiction proves that $\zeta \notin L_\alpha$ 
	in the case that $\alpha < \delta$.
	
	Secondly, assume that $\delta \le \alpha$. 
	Suppose for a contradiction that $\zeta \in L_\alpha$. 
	We claim that $\zeta \in X$. 
	We know that $\zeta$ is a limit ordinal. 
	By the choice of $\xi$, the fact that 
	$\zeta \in L_\alpha$ implies that $\zeta < \xi$. 
	Let $\gamma \in \dom(f_r) \cap \delta$, and we will 
	prove that it is not the case that 
	$\gamma \le \zeta \le f_r(\gamma)$. 
	Suppose for a contradiction that 
	$\gamma \le \zeta \le f_r(\gamma)$. 
	Observe that $\gamma \in \dom(f_v)$ by the definition of $v$. 
	So we have that $\zeta \in \dom(f_w)$ and $\gamma \le \zeta \le f_w(\gamma)$. 
	Since $f_w \in \textrm{CU}(\omega_1)$, this implies that $\zeta = \gamma$. 
	So $\zeta \in \dom(f_r)$, which contradicts our assumption that $\zeta$ 
	is in $\dom(f_w) \setminus \dom(f_r)$. 
	Thus, indeed it is not the case that $\gamma \le \zeta \le f_r(\gamma)$. 
	Now by our current assumption, $\zeta \in L_\alpha$ where 
	$\alpha \in x_r \setminus \delta$. 
	This concludes the argument that $\zeta \in X$. 
	By the definition of $v$, 
	$f_v(\alpha_\zeta) = \zeta$, where 
	$\alpha_\zeta < \zeta$. 
	Since $w \le v$, 
	$f_w(\alpha_\zeta) = \zeta$. 
	But $\zeta \in \dom(f_w)$, so as 
	$f_w \in \textrm{CU}(\omega_1)$, 
	$f_w(\alpha_\zeta) < \zeta$, which is a contradiction. 
	This completes the proof that $\zeta \notin L_\alpha$, and hence 
	that $t \le w, r$.
	
	We can now complete the proof. 
	Let $B_1$ be the set of nodes $y \in T^*$ such that for some $w \le v$ in $D$, 
	$w$ forces that $y \in \dot A$. 
	Then $B_1 \in M$ by elementarity, and $z \in B_1$ as witnessed by any common lower bound of $v$ and $s$. 
	Since $z$ is $(M,T^*)$-generic, we can find $y \ R \ z$ in $B_1$. 
	Note that $y \ R \ z$ and $z <_{T^*} x$ imply that $y \ R \ x$. 
	As $y$ and $B_1$ are in $M$, by elementarity there is $w \in M \cap D$ such that 
	$w \le v$ and $w$ forces that $y \in \dot A$. 
	Since every extension of $v$ in $M \cap \p$ is compatible with $r$, 
	$w$ and $r$ are compatible. 
	Fix $t \le r, w$. 
	Then $y \ R \ x$ and $t$ forces that $y \in \dot A$. 
	Also, $t \le w$ and $w \in N \cap D$.
	\end{proof}

\section{Stationarily Knaster and cardinal characteristics}

We will prove in this section that \textsf{PFA}($T^*$) 
implies $\textsf{MA}_{\omega_1}$($\sigma$-linked), and in particular, that 
all of the standard cardinal characteristics of the continuum are greater than $\omega_1$. 
This result will factor through a property of forcing posets 
called stationarily Knaster. 
We will also prove that under \textsf{PFA}($T^*$), 
there is a Knaster forcing which is not 
stationarily Knaster, whereas under 
\textsf{MA}$_{\omega_1}$, every $\omega_1$-c.c.\! forcing 
is stationarily Knaster.

Let $\p$ be a forcing poset. 
A set $B \subseteq \p$ is \emph{linked} if any two members of $B$ are compatible, and 
is \emph{centered} if any finitely many members of $B$ have a lower bound. 
The forcing poset $\p$ is \emph{Knaster} if whenever 
$\{ p_i : i < \omega_1 \}$ is a family of conditions in $\p$, then there exists an 
uncountable set $X \subseteq \omega_1$ such that the collection 
$\{ p_i : i \in X \}$ is linked. 
Oftentimes in proving that a particular forcing is Knaster using pushing down arguments, 
a stronger form of the Knaster property can be derived.

\begin{definition}[\cite{SK}]
	A forcing poset $\p$ is \emph{stationarily Knaster} if whenever $\{ p_i : i \in S \}$ is a 
	family of conditions in $\p$, where $S \subseteq \omega_1$ is stationary, then there is a 
	stationary set $T \subseteq S$ such that the collection 
	$\{ p_i : i \in T \}$ is linked.
	\end{definition}

\begin{proposition}
	Let $\p$ be a stationarily Knaster forcing poset. 
	Then $\p$ is $T^*$-proper.
	\end{proposition}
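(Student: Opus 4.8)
The plan is to avoid verifying the definition of $T^*$-properness directly. Since a stationarily Knaster forcing is $\omega_1$-c.c.\ and hence proper, Lemma 1.8 reduces the task to showing that $\p$ is $T^*$-preserving, i.e.\ that $\p$ forces $T^*$ to remain an $\omega_1$-Aronszajn tree with no stationary antichain. The chain condition preserves $\omega_1$ and leaves the levels, the order, and the height function of $T^*$ unchanged, so the only substance is to show that $\p$ adds neither an uncountable branch through $T^*$ nor a stationary antichain.

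For the branch, I would argue by contradiction: suppose $p \in \p$ forces that $\dot b$ is an uncountable chain of $T^*$, which we may assume is a cofinal branch through $T^*$. For each $\alpha < \omega_1$ choose, by density below $p$, a condition $p_\alpha \le p$ and a node $t_\alpha \in T^*_\alpha$ with $p_\alpha$ forcing $t_\alpha \in \dot b$. The Knaster property (a consequence of stationary Knasterness) yields an uncountable $X \subseteq \omega_1$ with $\{ p_\alpha : \alpha \in X \}$ linked. For $\alpha < \beta$ in $X$, a common extension of $p_\alpha$ and $p_\beta$ forces $t_\alpha, t_\beta \in \dot b$ and forces $\dot b$ to be a chain, hence forces $t_\alpha <_{T^*} t_\beta$; since the order of the ground-model tree $T^*$ is absolute, $t_\alpha <_{T^*} t_\beta$ holds in $V$. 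Thus $\{ t_\alpha : \alpha \in X \}$ is an uncountable chain of $T^*$ in $V$, contradicting that $T^*$ is Aronszajn.

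For the stationary antichain, suppose $p$ forces that $\dot I$ is an antichain of $T^*$ whose set of heights $\dot S$ is stationary. I would first show that the set $S'$ of $\alpha < \omega_1$ for which some $w \le p$ forces $\dot I \cap T^*_\alpha \neq \emptyset$ is stationary in $V$: if not, fix a club $C \in V$ disjoint from $S'$; since $\p$ is $\omega_1$-c.c., $C$ remains a club in the extension, and for every $\alpha \in C$ no extension of $p$ forces a node of the (countable, ground-model) level $T^*_\alpha$ into $\dot I$, so $p$ forces $\dot I \cap T^*_\alpha = \emptyset$, i.e.\ $\alpha \notin \dot S$; then $p$ forces $\dot S \cap C = \emptyset$, contradicting that $p$ forces $\dot S$ stationary. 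Now for each $\alpha \in S'$ pick $p_\alpha \le p$ and $t_\alpha \in T^*_\alpha$ with $p_\alpha$ forcing $t_\alpha \in \dot I$, and apply the stationarily Knaster property to $\{ p_\alpha : \alpha \in S' \}$ to get a stationary $T \subseteq S'$ with $\{ p_\alpha : \alpha \in T \}$ linked. For distinct $\alpha, \beta \in T$, a common extension forces $t_\alpha, t_\beta \in \dot I$ and forces $\dot I$ to be an antichain, hence forces $t_\alpha$ and $t_\beta$ to be incomparable; by absoluteness they are incomparable in $V$. So $\{ t_\alpha : \alpha \in T \}$ is an antichain of $T^*$ whose set of heights is $T$, a stationary antichain of $T^*$ in $V$, a contradiction. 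Having shown that $\p$ is $T^*$-preserving, Lemma 1.8 completes the proof.

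The hard part, if there is one, is the reflection step producing the stationary set $S'$: one must use the chain condition to argue that a putative club witnessing the non-stationarity of $S'$ survives into the extension and there contradicts the stationarity of $\dot S$. Everything else is routine chain-condition bookkeeping, the only repeated point being that the order and height relations of the fixed tree $T^*$ are absolute, which is precisely what converts the "forced" tree relations among the $t_\alpha$'s, obtained from a linked family, into genuine relations holding in $V$.
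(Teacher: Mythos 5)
Your proof is correct, and it takes a genuinely different route from the paper. The paper verifies the definition of $T^*$-proper directly: fix a countable $N \prec H(\theta)$, note that $\omega_1$-c.c.\ makes every condition $(N,\p)$-generic, and then, to show a condition $q$ is $(N,\p,T^*)$-generic, suppose it fails and use elementarity to produce a stationary set $S$ of heights of ``bad witnesses'' $z_\alpha$ with conditions $s_\alpha$; applying the stationarily Knaster property to $\{s_\alpha : \alpha \in S\}$ then produces either a stationary antichain or an uncountable chain of $T^*$, a contradiction. You instead observe that stationarily Knaster $\Rightarrow$ $\omega_1$-c.c.\ $\Rightarrow$ proper, and reduce via Lemma 1.8 to showing that $\p$ is $T^*$-preserving, i.e.\ that $\p$ adds neither an uncountable branch through $T^*$ nor a stationary antichain of $T^*$. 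Your branch argument and your antichain argument both use the same essential mechanism as the paper's: thin the witnesses to a (stationary) linked family, use absoluteness of the ground-model tree relations, and contradict the hypotheses on $T^*$. The trade-offs: your version leans on Lemma 1.8 (and hence implicitly on Lemma 1.5, which is hidden inside its proof), and you need the small reflection step showing the set $S'$ is stationary in $V$ before you can even apply the stationarily Knaster hypothesis; the paper's version gets that stationarity for free from elementarity of $N$. On the other hand, your decomposition is more modular and makes the role of the ``no stationary antichain'' and ``Aronszajn'' hypotheses more transparent, since they appear as two separate preservation claims rather than as two cases inside a single elementary-submodel argument. Both proofs are valid and of comparable length.
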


\begin{proof}
	Fix a regular cardinal $\theta$ such that $T^*$ and $\p$ are members of $H(\theta)$. 
	Let $N$ be a countable elementary substructure of $(H(\theta),\in,T^*,\p)$. 
	Let $\delta := N \cap \omega_1$. 
	Since $\p$ is $\omega_1$-c.c., standard arguments show that 
	every condition in $\p$ is $(N,\p)$-generic. 
	We will show that every condition in $\p$ is $(N,\p,T^*)$-generic.
	
	Let $p \in \p$, $x \in T^*_{\delta}$, $\dot A \in N$ a $\p$-name for a subset of $T^*$, 
	and $R \in \{ <_{T^*}, <_{T^*}^- \}$. 
	Assume that $q \le p$ and $q$ forces that $x \in \dot A$. 
	We claim that there exists $y \ R \ x$ and $r \le q$ such that $r$ forces that $y \in \dot A$. 
	Suppose for a contradiction that the claim fails. 
	Then for all $y \ R \ x$, $q$ forces that $y \notin \dot A$.
	
	Define $B$ as the set of $z \in T^*$ such that for some $s \in \p$, $s$ forces that $z \in \dot A$, 
	but for all $y \ R \ z$, $s$ forces that $y \notin \dot A$. 
	Then $B \in N$ by elementarity, and $x \in B$ as witnessed by $q$. 
	Define $S := \{ \h_{T^*}(z) : z \in B \}$, which is in $N$. 
	Since $x \in B$, $\delta = \h_{T^*}(x)$ is in $S$. 
	But $\delta$ is a member of every club subset of $\omega_1$ which lies in $N$, so by 
	elementarity, $S$ is a stationary subset of $\omega_1$.
	
	For each $\alpha \in S$, fix a node $z_\alpha \in B$ with height $\alpha$ and 
	a condition $s_\alpha$ which witnesses that $z_\alpha \in B$. 
	Then $s_\alpha$ forces that $z_\alpha \in \dot A$, but for all $y \ R \ z_\alpha$, 
	$s_\alpha$ forces that $y \notin \dot A$. 
	We have that $\{ s_\alpha : \alpha \in S \} \subseteq \p$ and $S \subseteq \omega_1$ is stationary. 
	Since $\p$ is stationarily Knaster, we can find a stationary set $T \subseteq S$ 
	such that for all $\alpha < \beta$ in $T$, $s_\alpha$ and $s_\beta$ are compatible.
	
	Consider $\alpha < \beta$ in $T$. 
	Fix $r \le s_\alpha, s_\beta$. 
	Then $r$ forces that $z_\alpha$ and $z_\beta$ are in $\dot A$. 
	But $s_\beta$, and hence $r$, forces that any $y \ R \ z_\beta$ is not in $\dot A$. 
	So it must be the case that $z_\alpha$ is not $R$-below $z_\beta$.

	We consider two cases. 
	First, assume that $R$ equals $<_{T^*}$. 
	Then for all $\alpha < \beta$ in $T$, $z_\alpha \not <_{T^*} z_\beta$. 
	It follows that $z_\alpha$ and $z_\beta$ are incomparable in $T^*$. 
	Thus, $\{ z_\alpha : \alpha \in T \}$ is an antichain of $T^*$. 
	But the collection of heights of nodes in this antichain is exactly $T$, which 
	is stationary. 
	So $\{ z_\alpha : \alpha \in T \}$ is a stationary antichain, 
	which contradicts that $T^*$ has no stationary antichain.
			
	Secondly, assume that $R$ equals $<_{T^*}^-$. 
	Then for all $\alpha < \beta$ in $T$, it is not the case that 
	$z_\alpha <_{T^*}^- z_\beta$. 
	Since $\h_{T^*}(z_\alpha) = \alpha < \beta = \h_{T^*}(z_\beta)$, this means that 
	$z_\alpha <_{T^*} z_\beta$. 	
	Thus, $\{ z_\alpha : \alpha \in T \}$ is an uncountable chain of $T^*$, 
	which contradicts that $T^*$ is an $\omega_1$-Aronszajn tree. 
\end{proof}

\begin{corollary}
	$\textsf{PFA}(T^*)$ implies $\textsf{MA}_{\omega_1}(\textrm{stationarily Knaster})$.
	\end{corollary}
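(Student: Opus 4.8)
The plan is to combine the two main results of this section with the characterization of $\textsf{PFA}(T^*)$ from Section 2. Let $\p$ be a stationarily Knaster forcing poset, and let $\langle D_\alpha : \alpha < \omega_1 \rangle$ be a sequence of dense subsets of $\p$. First I would invoke Proposition 4.3 to conclude that $\p$ is $T^*$-proper. Then, since by Proposition 2.6 $\textsf{PFA}(T^*)$ is equivalent to the statement that for every $T^*$-proper forcing poset and every $\omega_1$-sequence of dense subsets there is a filter meeting all of them, I would apply this directly to $\p$ and $\langle D_\alpha : \alpha < \omega_1 \rangle$, obtaining a filter $G$ on $\p$ with $G \cap D_\alpha \ne \emptyset$ for all $\alpha < \omega_1$. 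This is precisely $\textsf{MA}_{\omega_1}(\textrm{stationarily Knaster})$.

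There is essentially no obstacle in this argument: all of the content sits in Proposition 4.3 (the pushing-down argument, using the absence of a stationary antichain and the non-existence of an uncountable branch, that shows a stationarily Knaster forcing is $T^*$-proper) and in Proposition 2.6 (which is what lets us work with mere $T^*$-properness rather than having to verify $T^*$-preservation of $\p$ itself). The one bookkeeping point worth noting is that $\textsf{MA}_{\omega_1}$ relative to a class of forcings concerns filters meeting $\omega_1$-many dense sets, and a stationarily Knaster poset is in particular Knaster, hence $\omega_1$-c.c., so the conclusion produced by Proposition 2.6 is exactly the statement $\textsf{MA}_{\omega_1}(\textrm{stationarily Knaster})$ and nothing more needs to be said.

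Finally, as remarked at the start of the section, a standard pushing-down argument shows that every $\sigma$-linked (indeed every $\sigma$-centered) forcing poset is stationarily Knaster, so this corollary in particular yields $\textsf{MA}_{\omega_1}(\sigma\text{-linked})$ under $\textsf{PFA}(T^*)$, from which it follows that all of the standard cardinal characteristics of the continuum are greater than $\omega_1$.
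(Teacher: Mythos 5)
Your argument is correct and is exactly the paper's intended route: the corollary is stated immediately after the proposition that stationarily Knaster posets are $T^*$-proper precisely because it follows directly from that proposition together with the characterization of $\textsf{PFA}(T^*)$ in terms of $T^*$-proper forcings. (The only quibble is bookkeeping: those two results are Proposition 4.2 and Proposition 2.5 in the paper, not 4.3 and 2.6, but the content you are invoking is right.)
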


Recall that a forcing poset is \emph{$\sigma$-centered} if 
it is the union of countably many centered subsets, and is \emph{$\sigma$-linked} 
if it is the union of countably many linked subsets. 
Obviously, $\sigma$-centered implies $\sigma$-linked.

\begin{lemma}
	If $\p$ is a $\sigma$-linked forcing poset, then $\p$ is stationarily Knaster.
	\end{lemma}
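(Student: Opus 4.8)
The plan is to use a straightforward pigeonhole argument. Write $\p = \bigcup_{n < \omega} B_n$, where each $B_n \subseteq \p$ is linked. Suppose $\{ p_i : i \in S \}$ is a family of conditions in $\p$ with $S \subseteq \omega_1$ stationary. For each $i \in S$, since $p_i \in \p$, fix some $n_i < \omega$ such that $p_i \in B_{n_i}$ (any such $n_i$ will do if $p_i$ lies in more than one $B_n$). For each $n < \omega$, let $S_n := \{ i \in S : n_i = n \}$. Then $S = \bigcup_{n < \omega} S_n$.

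Since the nonstationary ideal on $\omega_1$ is countably complete, a countable union of nonstationary sets is nonstationary; as $S$ is stationary, it follows that $S_n$ is stationary for some $n < \omega$. Fix such an $n$ and let $T := S_n$. Then $T \subseteq S$ is stationary, and for every $i \in T$ we have $p_i \in B_n$. Since $B_n$ is linked, any two members of $\{ p_i : i \in T \}$ are compatible, so this collection is linked. Hence $\p$ is stationarily Knaster. The argument is entirely routine; the only point worth stating explicitly is the countable completeness of $\mathrm{NS}_{\omega_1}$, which guarantees that one of the countably many pieces $S_n$ remains stationary.
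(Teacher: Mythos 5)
Your proof is correct and follows essentially the same route as the paper: decompose $\p$ into countably many linked pieces, map each index to a piece containing its condition, and use countable completeness of the nonstationary ideal to find a stationary subset landing in a single piece. The paper phrases this as finding a stationary set on which the induced function $S \to \omega$ is constant, which is the same pigeonhole step.
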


\begin{proof}
	Let $\p = \bigcup \{ X_n : n < \omega \}$, where each $X_n$ is linked. 
	Consider a collection $\{ p_i : i \in S \}$ of conditions in $\p$, where $S \subseteq \omega_1$ is stationary. 
	If we define a function which maps each $\alpha \in S$ to the least $n < \omega$ 
	such that $p_\alpha \in X_n$, then there exists a stationary set $T \subseteq S$ 
	on which this function is constant. 
	Then for some $n < \omega$, $\{ p_i : i \in T \} \subseteq X_n$, and hence 
	$\{ p_i : i \in T \}$ is linked.
	\end{proof}

\begin{corollary}
	$\textsf{PFA}(T^*)$ implies \textsf{MA}$_{\omega_1}(\sigma$-\textrm{linked}).
	\end{corollary}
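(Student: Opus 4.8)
The plan is to observe that this is an immediate consequence of the two results just established, Lemma 4.6 and Corollary 4.5. First I would unwind the definition of $\textsf{MA}_{\omega_1}(\sigma\text{-linked})$: one is given a $\sigma$-linked forcing poset $\p$ together with a sequence $\langle D_\alpha : \alpha < \omega_1 \rangle$ of dense subsets of $\p$, and one must produce a filter $G$ on $\p$ meeting every $D_\alpha$.

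The key step is to pass from the hypothesis ``$\p$ is $\sigma$-linked'' to ``$\p$ is stationarily Knaster'' using Lemma 4.6, and then to apply Corollary 4.5, which asserts precisely that $\textsf{PFA}(T^*)$ yields a filter meeting any $\omega_1$-sized family of dense sets in a stationarily Knaster forcing. (Corollary 4.5 itself rests on Proposition 4.3, that stationarily Knaster forcings are $T^*$-proper, combined with the characterization of $\textsf{PFA}(T^*)$ in Proposition 2.6.) Chaining these gives the desired filter $G$, and hence $\textsf{MA}_{\omega_1}(\sigma\text{-linked})$.

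There is essentially no obstacle here: all the real work has already been done in establishing that stationarily Knaster forcings are $T^*$-proper and that $\sigma$-linked implies stationarily Knaster. The only minor point worth stating explicitly is that $\sigma$-centered forcings are in particular $\sigma$-linked, so the same conclusion applies to $\textsf{MA}_{\omega_1}(\sigma\text{-centered})$, and from this one reads off that all the standard cardinal characteristics of the continuum handled by $\textsf{MA}_{\omega_1}(\sigma\text{-centered})$ or $\textsf{MA}_{\omega_1}(\sigma\text{-linked})$ are greater than $\omega_1$ under $\textsf{PFA}(T^*)$.
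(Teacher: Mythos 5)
Your argument is exactly the paper's: chain Lemma 4.4 ($\sigma$-linked implies stationarily Knaster) with Corollary 4.3 ($\textsf{PFA}(T^*)$ implies $\textsf{MA}_{\omega_1}(\textrm{stationarily Knaster})$), the latter resting on Proposition 4.2 and Proposition 2.5. Your numbering of the cited results is shifted (you write ``Lemma 4.6'' and ``Corollary 4.5'' for what are Lemma 4.4 and Corollary 4.3), but the mathematical content you describe is correct and matches the paper's intended deduction.
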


We can now conclude that \textsf{PFA}($T^*$) implies that all of the standard cardinal characteristics 
of the continuum are larger than $\omega_1$. 
Since the pseudo-intersection number $\mathfrak p$ is the smallest among such cardinal characteristics, 
it suffices to show that \textsf{PFA}($T^*$) implies that $\mathfrak p > \omega_1$. 
But $\mathfrak{p} > \omega_1$ is equivalent to 
the forcing axiom \textsf{MA}$_{\omega_1}$($\sigma$-centered) (\cite{bell}). 
And by Lemma 4.4, 
every $\sigma$-centered forcing poset is stationarily Knaster, and hence $T^*$-proper.

\begin{corollary}
	$\textsf{PFA}(T^*)$ implies that $\mathfrak p > \omega_1$.
\end{corollary}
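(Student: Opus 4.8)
The plan is to reduce the statement to a known forcing axiom and then invoke a classical characterization of $\mathfrak{p}$. Recall that the pseudo-intersection number $\mathfrak{p}$ is provably the least among the standard cardinal characteristics of the continuum, so it suffices to show that $\textsf{PFA}(T^*)$ implies $\mathfrak{p} > \omega_1$. By Bell's theorem \cite{bell}, the inequality $\mathfrak{p} > \omega_1$ is equivalent to the forcing axiom $\textsf{MA}_{\omega_1}(\sigma\text{-centered})$, which asserts that for every $\sigma$-centered forcing poset $\p$ and every sequence $\langle D_\alpha : \alpha < \omega_1 \rangle$ of dense subsets of $\p$, there is a filter on $\p$ meeting every $D_\alpha$. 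Thus it is enough to derive $\textsf{MA}_{\omega_1}(\sigma\text{-centered})$ from $\textsf{PFA}(T^*)$.

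For this, I would chain together the results already in hand. Every $\sigma$-centered forcing poset is $\sigma$-linked, hence stationarily Knaster by Lemma 4.4, hence $T^*$-proper by Proposition 4.2. Given a $\sigma$-centered $\p$ together with dense sets $\langle D_\alpha : \alpha < \omega_1 \rangle$, one applies the characterization of $\textsf{PFA}(T^*)$ from Proposition 2.5 — legitimate since $\p$ is $T^*$-proper — to obtain a filter $G$ on $\p$ with $G \cap D_\alpha \ne \emptyset$ for all $\alpha < \omega_1$. This is precisely an instance of $\textsf{MA}_{\omega_1}(\sigma\text{-centered})$; equivalently, it is exactly Corollary 4.5 applied to $\p$, using that $\sigma$-centered implies $\sigma$-linked. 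Combining this with Bell's theorem yields $\mathfrak{p} > \omega_1$.

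Since each step here is an appeal to a lemma already proved in the paper or to a standard fact, there is no genuine obstacle. The only points requiring a little care are the correct invocation of Bell's characterization of $\mathfrak{p}$ (namely that $\mathfrak{p} > \omega_1$ is equivalent to $\textsf{MA}_{\omega_1}(\sigma\text{-centered})$) and the trivial observation that $\sigma$-centered forcings are a special case covered by Corollary 4.5.
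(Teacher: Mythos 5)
Your proposal is correct and matches the paper's argument essentially verbatim: both reduce via Bell's theorem to $\textsf{MA}_{\omega_1}(\sigma\text{-centered})$ and then observe that $\sigma$-centered $\Rightarrow$ $\sigma$-linked $\Rightarrow$ stationarily Knaster (Lemma 4.4) $\Rightarrow$ $T^*$-proper (Proposition 4.2), so that Proposition 2.5 yields the required filter.
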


It is a natural question whether the property of being Knaster is equivalent to being stationarily Knaster. 
Under \textsf{MA}$_{\omega_1}$, the answer is yes.

\begin{proposition}
	Assume $\textsf{MA}_{\omega_1}$. 
	Then every $\omega_1$-c.c.\! forcing poset is stationarily Knaster.
\end{proposition}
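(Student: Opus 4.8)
The plan is to reduce to finding a stationary $T \subseteq S$ for which $\{p_i : i \in T\}$ is \emph{centered} (not merely linked), and to produce such a $T$ by applying \textsf{MA}$_{\omega_1}$ to a carefully chosen $\omega_1$-c.c.\ forcing. The crux is getting stationarity rather than merely uncountability: I will have the forcing also generically build a closed unbounded $C \subseteq \omega_1$ and arrange that $T = C \cap S^*$ for a suitable stationary $S^* \subseteq S$. Since the filter produced by \textsf{MA}$_{\omega_1}$ lies in the ground model, $C$ will be a genuine ground model club, whence $C \cap S^*$ is stationary there.

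\emph{The stationary set $S^*$.} Call a finite $F \subseteq S$ \emph{good} if $\{p_i : i \in F\}$ has a lower bound $v_F$ in $\p$ (singletons are always good). I claim there is a stationary $S^* \subseteq S$ such that for every good nonempty $F \subseteq S^*$ and every $\beta < \omega_1$ there is $\gamma \in S^* \setminus \beta$ with $p_\gamma$ compatible with $v_F$. If not, recursing on $\alpha < \omega_1$ one builds a decreasing sequence $\langle S_\alpha : \alpha < \omega_1 \rangle$ of stationary subsets of $S$ --- with $S_0 = S$, each $S_{\alpha+1}$ equal to $S_\alpha$ minus a bounded set, and $S_\lambda = \bigcap_{\alpha<\lambda} S_\alpha$ at limits, still stationary since it is $S$ minus a bounded set --- together with good nonempty $F_\alpha \subseteq S_\alpha$ such that $p_\gamma$ is incompatible with $v_{F_\alpha}$ for every $\gamma \in S_{\alpha+1}$. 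Then for $\alpha < \beta$, picking any $j \in F_\beta \subseteq S_{\alpha+1}$ we have $v_{F_\beta} \leq p_j$ and $p_j$ incompatible with $v_{F_\alpha}$, so $v_{F_\beta}$ is incompatible with $v_{F_\alpha}$; thus $\{v_{F_\alpha} : \alpha < \omega_1\}$ is an uncountable antichain of $\p$, contradicting the $\omega_1$-c.c.

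\emph{The forcing and the application of \textsf{MA}$_{\omega_1}$.} Let $\q$ consist of all pairs $(F,f)$ with $f \in \textrm{CU}(\omega_1)$, $F = \dom(f) \cap S^*$, and $F$ good, ordered by coordinatewise reverse inclusion. I would check: (a) $\q$ is $\omega_1$-c.c.: given uncountably many conditions, first thin them (by a $\Delta$-system argument, as in the standard proof that Baumgartner's club forcing is $\omega_1$-c.c.) so that the $\textrm{CU}(\omega_1)$-coordinates become pairwise compatible, then apply the $\omega_1$-c.c.\ of $\p$ to a choice of lower bounds $v_{F_\xi}$ to find two conditions with compatible $\p$-sides; the coordinatewise union of those two conditions is then a common extension in $\q$. (b) For each $\beta < \omega_1$, the set $\{(F,f)\in\q : \max\dom(f) \geq \beta\}$ is dense: given $(F_0,f_0)$, adjoin a new point $\gamma$ far above both $\beta$ and $\ran(f_0)$ to $\dom(f_0)$ --- if $\omega_1 \setminus S^*$ is unbounded take $\gamma \notin S^*$, and otherwise use the property of $S^*$ to take $\gamma \in S^*$ with $p_\gamma$ compatible with $v_{F_0}$, adding $\gamma$ to $F_0$ as well. (c) There are $\omega_1$-many further dense sets whose meeting forces $\bigcup\{\dom(f) : (F,f)\in\dot G_\q\}$ to be \emph{closed}, using once more the property of $S^*$ to supply the needed new points of $S^*$ compatible with the relevant lower bounds. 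Now apply \textsf{MA}$_{\omega_1}$ to $\q$ with the dense sets from (b) and (c) to get a filter $H$, and set $C := \bigcup\{\dom(f) : (F,f)\in H\}$ and $T := \bigcup\{F : (F,f)\in H\}$. By (b) and (c), $C$ is a ground model club; by the coherence requirement $F = \dom(f)\cap S^*$, we get $T = C \cap S^*$, which is therefore stationary. Finally, if $i_1,\dots,i_n \in T$ then, $H$ being directed, some $(F,f)\in H$ has $\{i_1,\dots,i_n\}\subseteq F$, so $p_{i_1},\dots,p_{i_n}$ have a lower bound; hence $\{p_i : i \in T\}$ is centered, as required.

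I expect the essential difficulty to be exactly the passage from ``uncountable'' to ``stationary'': running \textsf{MA}$_{\omega_1}$ directly on the poset of finite good subsets of $S$ (which is $\omega_1$-c.c.\ by the reasoning in (a), and whose generic is easily seen to be uncountable) only yields an uncountable centered subfamily, and there is no room to meet a dense set for each of the $2^{\omega_1}$ clubs. Coupling with $\textrm{CU}(\omega_1)$ is what reduces matters to building one ground model club, and the preliminary pressing-down is what makes the coupled forcing rich enough for the requisite $\omega_1$-many dense sets to be dense; the most technical point is (c), i.e.\ guaranteeing that the generic club is genuinely closed, so that $T$ itself --- and not merely its closure intersected with $S^*$ --- is stationary.
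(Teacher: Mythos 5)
Your pressing-down argument producing the stationary set $S^*$ is fine, and your diagnosis of where the difficulty lies (``uncountable'' versus ``stationary'') is accurate. But step (a) is wrong: the forcing $\q$ you build is \emph{not} $\omega_1$-c.c., so $\textsf{MA}_{\omega_1}$ cannot be applied to it, and the whole construction collapses.

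Concretely, for each $\xi < \omega_1$ the pair $(\{0\}\cap S^*,\ \{(0,\xi)\})$ is a condition of $\q$: the $\mathrm{CU}(\omega_1)$ clause imposes no constraint on a one-point domain, and $\{0\}\cap S^*$ is either empty or a singleton, hence good. Any two of these conditions disagree on the value at $0$, so they are pairwise incompatible. Thus $\q$ (indeed already $\mathrm{CU}(\omega_1)$) has an uncountable antichain. The parenthetical appeal to ``the standard proof that Baumgartner's club forcing is $\omega_1$-c.c.'' is a misremembering; that forcing is proper (indeed strongly proper, as in Lemma~3.3 of this paper) but not $\omega_1$-c.c., and the $\Delta$-system thinning you invoke cannot even align the root values, let alone handle the interval-promise interaction between the private parts of different conditions. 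The obstruction is not cosmetic: the conditions of $\mathrm{CU}$ must carry unbounded-range ``promises'' of the form $f(\alpha)$ in order for the generic domain to be closed, and it is exactly those promises that generate uncountable antichains; so replacing $\mathrm{CU}(\omega_1)$ by any similar finite-condition club forcing will run into the same problem.

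For comparison, the paper's proof is entirely different and much shorter: given $\{p_i : i \in S\}$ with $S$ stationary, pass to a subposet $\p'$ of size $\omega_1$ containing all $p_i$ and closed under compatibility witnesses (so it is still $\omega_1$-c.c.). By the well-known fact that $\textsf{MA}_{\omega_1}$ makes every $\omega_1$-c.c.\ poset of size $\omega_1$ $\sigma$-centered, $\p'$ is $\sigma$-linked, and Lemma~4.4 (a one-line Fodor pressing-down on the colour) then yields a stationary $T \subseteq S$ with $\{p_i : i \in T\}$ linked. In particular, the passage from ``uncountable'' to ``stationary'' is absorbed into the $\sigma$-decomposition rather than engineered by an auxiliary club-adding forcing.
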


This easily follows from Lemma 4.4 and the well-known fact that 
under \textsf{MA}$_{\omega_1}$, every $\omega_1$-c.c.\! forcing poset 
of size $\omega_1$ is $\sigma$-centered.

The rest of this section is devoted to proving that under \textsf{PFA}($T^*$), 
the forcing poset consisting of finite antichains of $T^*$ ordered by reverse inclusion 
is Knaster but not stationarily Knaster. 

Recall the result of Baumgartner-Malitz-Reinhardt \cite{baumgartner} that 
if $T$ is a tree of height $\omega_1$ 
with no uncountable chains, and $\langle x_\alpha : \alpha < \omega_1 \rangle$ 
is a pairwise disjoint sequence of finite subsets of $T$, then there exist $\alpha < \beta$ 
such that every node in $x_\alpha$ is incomparable in $T$ with every node in $x_\beta$. 
This implies an apparently stronger conclusion.

\begin{lemma}
	Let $T$ be a tree of height $\omega_1$ with no uncountable chains, and let 
	$\langle x_\alpha : \alpha < \omega_1 \rangle$ 
	be a pairwise disjoint sequence of finite subsets of $T$. 
	Then there is a countably infinite set $A \subseteq \omega_1$ such that for all 
	$\alpha < \beta$ in $A$, every member of $x_\alpha$ is incomparable in $T$ 
	with every member of $x_\beta$.
	\end{lemma}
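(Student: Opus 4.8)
The plan is to bootstrap the single-pair statement of Baumgartner--Malitz--Reinhardt into the existence of an infinite set by a straightforward $\Delta$-system plus iteration argument. First I would apply the $\Delta$-system lemma to the pairwise disjoint sequence $\langle x_\alpha : \alpha < \omega_1 \rangle$. Since the $x_\alpha$ are pairwise disjoint finite sets, the $\Delta$-system lemma gives an uncountable set $I \subseteq \omega_1$ and a fixed finite $r \subseteq T$ with $x_\alpha \cap x_\beta = r$ for distinct $\alpha,\beta \in I$; by pairwise disjointness, $r = \emptyset$, so in fact on $I$ the sets remain pairwise disjoint with no further intersection structure needed. (Alternatively one may skip the $\Delta$-system step entirely since the $x_\alpha$ are already pairwise disjoint — I would keep the argument lean and use the sequence directly.)

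The heart of the argument is a recursive construction of the set $A = \{ \alpha_n : n < \omega \}$. I would build $\alpha_0 < \alpha_1 < \cdots$ by recursion so that at each stage, having chosen $\alpha_0,\dots,\alpha_n$ with the incomparability property among themselves, I need to find $\alpha_{n+1} > \alpha_n$ so that every member of $x_{\alpha_{n+1}}$ is incomparable with every member of $x_{\alpha_i}$ for all $i \le n$. To do this, let $y := x_{\alpha_0} \cup \cdots \cup x_{\alpha_n}$, a fixed finite subset of $T$. Consider the sequence $\langle x_\beta : \beta > \alpha_n \rangle$. Each $x_\beta$ has only finitely many members, and each member $t \in x_\beta$ has only countably many nodes of $T$ comparable with it below height $\h_T(t)$... rather, the clean way: apply the Baumgartner--Malitz--Reinhardt lemma to the sequence consisting of the single block $y$ together with the tail $\langle x_\beta : \beta > \alpha_n \rangle$. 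More precisely, I would re-index: set $w_0 := y$ and enumerate $\langle w_{1+\xi} : \xi < \omega_1 \rangle := \langle x_\beta : \alpha_n < \beta < \omega_1 \rangle$. This is a pairwise disjoint sequence of finite subsets of $T$ indexed by $\omega_1$ (using that $y$ is disjoint from all later $x_\beta$, which holds since $y \subseteq \bigcup_{i\le n} x_{\alpha_i}$ and the original sequence is pairwise disjoint). By the Baumgartner--Malitz--Reinhardt result there are indices $\mu < \nu$ with every node of $w_\mu$ incomparable with every node of $w_\nu$. This does not immediately force $\mu = 0$, so instead I would argue: for each fixed $\beta > \alpha_n$, let $Z_\beta$ be the set of $\gamma > \alpha_n$ such that some node of $x_\gamma$ is comparable with some node of $y$; I claim $Z := \{\gamma > \alpha_n : \text{some node of } x_\gamma \text{ is comparable with some node of } y\}$ is countable, because $y$ is finite, $T$ has no uncountable chains, and for each fixed $t \in y$ the set of $\gamma$ such that some node of $x_\gamma$ is comparable with $t$ is countable (the nodes comparable with $t$ form a union of a chain below $t$ and the subtree above $t$; the subtree above $t$ restricted to a choice of one node per relevant $x_\gamma$ would be an antichain... here I need the no-uncountable-chains hypothesis more carefully).

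Let me restate the key claim cleanly, as this is the main obstacle. \textbf{Claim:} for a fixed finite $y \subseteq T$ and a pairwise disjoint family $\langle x_\gamma : \gamma \in W\rangle$ with $W$ uncountable, there is an uncountable $W' \subseteq W$ such that every node of $x_\gamma$ ($\gamma \in W'$) is incomparable with every node of $y$. Granting the Claim, the recursion goes through: at stage $n$, apply the Claim with $y = \bigcup_{i\le n} x_{\alpha_i}$ to thin the tail to an uncountable set avoiding comparability with $y$, then pick $\alpha_{n+1}$ from that set above $\alpha_n$; incomparability among $\alpha_0,\dots,\alpha_{n+1}$ follows since each pair $(\alpha_i,\alpha_{n+1})$ with $i \le n$ has $x_{\alpha_{n+1}}$ nodes incomparable with $x_{\alpha_i} \subseteq y$. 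To prove the Claim, suppose toward a contradiction that for uncountably many $\gamma$ some node of $x_\gamma$ is comparable with some node of $y$; since $y$ is finite, by pigeonhole fix $t \in y$ and an uncountable $W_0$ such that for each $\gamma \in W_0$ some node $s_\gamma \in x_\gamma$ is comparable with $t$. The nodes comparable with $t$ split into those below $t$ (a chain of length $\h_T(t) < \omega_1$, hence only countably many, absorbing at most countably many $\gamma$) and those $\ge t$; so uncountably many $\gamma \in W_0$ have $s_\gamma \ge_T t$, i.e. $s_\gamma$ lies in the subtree $T_{\ge t}$. But the $s_\gamma$ are distinct (the $x_\gamma$ are pairwise disjoint), and I would then invoke the Baumgartner--Malitz--Reinhardt lemma once more on the singletons $\langle \{s_\gamma\} : \gamma \in W_0'\rangle$ inside $T$ — wait, that lemma gives incomparability, not a contradiction. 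The correct finish: the subtree $T_{\ge t}$ has height $\omega_1$ and no uncountable chains, so it is an $\omega_1$-tree-like object; having uncountably many distinct nodes in it is no contradiction by itself. I therefore need the stronger observation that one should instead directly apply Baumgartner--Malitz--Reinhardt to get incomparability with $t$: consider the pairwise disjoint sequence $\langle \{t\} \cup x_\gamma$... no. The honest main obstacle is exactly this Claim, and the cleanest route is: iterate the single-pair lemma $\omega$ times but at each step applying it to a sequence whose $\mu$-th term can be forced to be the ``accumulated'' block by a closing-off argument — specifically, given the accumulated finite $y$, well-order the tail and use the single-pair lemma repeatedly to extract an increasing $\omega$-sequence $\beta_0 < \beta_1 < \cdots$ from the tail with $x_{\beta_j}$ pairwise incomparable across $j$, then note that any fixed node of $y$ is comparable with at most one node among a chain and with nodes in at most ... this still needs care. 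I would present the Claim as the technical lemma, prove it by: the relation ``some node of $x_\gamma$ is comparable with $t$'' for fixed $t$ partitions into ``below $t$'' (countably many $\gamma$) and ``above $t$''; among the ``above $t$'' ones, the map $\gamma \mapsto$ (the unique node of $x_\gamma$ above $t$, if it exists and is unique after thinning) embeds an uncountable set into $T_{>t}$, and then apply the \emph{original} Baumgartner--Malitz--Reinhardt conclusion to the singleton blocks to find two indices $\gamma < \gamma'$ with their nodes incomparable — contradicting that both are above $t$, hence comparable with each other. That is the contradiction. So the proof is: $\Delta$-system / pairwise disjointness setup, the Claim proved via this last contradiction, then the $\omega$-step recursion. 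The main obstacle is organizing the Claim's proof so that the ``above $t$'' case genuinely yields a contradiction from Baumgartner--Malitz--Reinhardt applied to singletons within the chain-comparable family.
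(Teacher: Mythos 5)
There is a genuine gap, and in fact the key Claim you isolate is false. Take any $t \in T$ with $T_{\ge t}$ uncountable (such $t$ exists since $T$ has height $\omega_1$), set $y = \{t\}$, and let $\langle x_\gamma : \gamma \in W \rangle$ be an uncountable pairwise disjoint family of singletons $x_\gamma = \{s_\gamma\}$ with every $s_\gamma \ge_T t$. Then every node of every $x_\gamma$ is comparable with the node $t \in y$, so no uncountable (indeed, no nonempty) $W' \subseteq W$ avoids comparability with $y$. Since in your recursion the accumulated block $y$ can easily be of this form (e.g.\ when $x_{\alpha_0}$ happens to contain a low node of the tree), the recursion cannot be started.

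Moreover, your proposed proof of the Claim contains a false tree-theoretic step. You write that finding $\gamma < \gamma'$ with $s_\gamma$ and $s_{\gamma'}$ incomparable ``contradict[s] that both are above $t$, hence comparable with each other.'' But two nodes above a common node $t$ in a tree are \emph{not} in general comparable --- they can lie on different branches splitting above $t$. That is exactly the branching that makes a tree a tree. So no contradiction is obtained, and the ``above $t$'' case is not handled.

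The paper's actual proof avoids any recursion or accumulation altogether. It defines the $2$-coloring $F(\alpha,\beta) = 0$ iff some member of $x_\alpha$ is comparable with some member of $x_\beta$, and applies the Dushnik--Miller partition relation $\omega_1 \to (\omega_1,\omega)^2$. An uncountable $0$-homogeneous set would directly contradict the Baumgartner--Malitz--Reinhardt single-pair result, so the theorem yields a countably infinite $1$-homogeneous set, which is precisely the desired $A$. This route bypasses the (false) need to make the thinned family avoid the accumulated block, which is the point at which your approach breaks. If you want to keep a recursive flavor, you would have to track much finer information than the finite set $y$ --- in effect you would be re-proving $\omega_1 \to (\omega_1,\omega)^2$ --- so invoking Dushnik--Miller is both the shortest and the conceptually correct step here.
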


\begin{proof}
	Define a function $F : [\omega_1]^2 \to 2$ by $F(\alpha,\beta) = 0$ if there is a member of 
	$x_\alpha$ which is comparable in $T$ with a member of $x_\beta$, and 
	$F(\alpha,\beta) = 1$ otherwise. 
	By the Dushnik-Miller theorem $\omega_1 \to (\omega_1,\omega)^2$, 
	there exists either an uncountable subset of $\omega_1$ 
	all of whose pairs get value $0$, or a countably infinite subset of $\omega_1$ all of whose 
	pairs get value $1$. 
	Note that the first possibility contradicts the result of Baumgartner-Malitz-Reinhardt. 
	So the second possibility holds, which gives us the conclusion of the lemma.	
	\end{proof}

The next three results show 
that under \textsf{PFA}($T^*$), we can strengthen ``countably infinite'' 
in the last lemma to ``uncountable''.

\begin{lemma}
	Let $T$ be an $\omega_1$-tree. 
	Suppose that $\langle y_n : n < \omega \rangle$ is a sequence satisfying:
	\begin{enumerate}
		\item each $y_n$ is a finite subset of $T$;
		\item for all $m < n$, for all $x \in y_m$ and $z \in y_n$, 
		$\h_{T}(x) < \h_{T}(z)$ and $x$ and $z$ are incomparable in $T$.
		\end{enumerate}
	Let $y$ be a finite subset of $T$ such that for all 
	$n < \omega$, every node of $y_n$ has height less than the height of every 
	node in $y$. 
	Then there exists $n < \omega$ such that every member of $y_n$ is incomparable in 
	$T$ with every member of $y$.
	\end{lemma}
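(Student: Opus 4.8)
The plan is to argue by contradiction, assuming that for every $n < \omega$ some member of $y_n$ is comparable in $T$ with some member of $y$. Since $y$ is finite, I would apply the pigeonhole principle to find a single node $w \in y$ and an infinite set $I \subseteq \omega$ such that for each $n \in I$ there is a node $z_n \in y_n$ with $z_n$ comparable to $w$. The key observation is that for $n \in I$, the height of $z_n$ is less than $\h_T(w)$ (by the hypothesis on $y$), so comparability of $z_n$ with $w$ forces $z_n <_T w$; that is, $z_n$ lies on the branch of $T$ below $w$, which is a chain.

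Now I would use property (2) of the sequence $\langle y_n : n < \omega\rangle$ to derive a contradiction. For $m < n$ in $I$, the nodes $z_m \in y_m$ and $z_n \in y_n$ are incomparable in $T$ by (2). But both $z_m$ and $z_n$ are $<_T$-below $w$, hence both lie on the single branch through $w$, and any two such nodes are comparable in $T$. This contradicts their incomparability, so the assumption fails and there is some $n < \omega$ with every member of $y_n$ incomparable with every member of $y$.

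The argument is entirely elementary once the pigeonhole step is set up; there is no real obstacle, and in particular no use of $T^*$-properness or of the forcing axiom is needed — this is a purely combinatorial fact about $\omega_1$-trees. The only point requiring a small amount of care is the reduction to a single comparable pair: one first shrinks to a fixed $w \in y$ via pigeonhole on the infinitely many $n$, and then within that situation selects $z_n \in y_n$ comparable to $w$. After that, the height hypothesis does all the work, turning ``comparable'' into ``strictly below $w$'' and thereby placing all the $z_n$ on one chain, where property (2) cannot be satisfied by two of them.
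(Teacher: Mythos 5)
Your proof is correct and matches the paper's argument essentially step for step: contradiction, pigeonhole on the finite set $y$ to find a single $w\in y$ comparable to some $z_n\in y_n$ for infinitely many $n$, the height constraint forcing $z_n <_T w$, and the tree property (predecessors of $w$ form a chain) contradicting condition~(2). No meaningful difference from the paper's proof.
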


\begin{proof}
	Suppose for a contradiction that the conclusion of the lemma fails. 
	Define a function $f : \omega \to y$ by letting $f(n) \in y$ be some node which  
	is above some member of $y_n$ in $T$. 
	Then there is some $z \in y$ such that for infinitely many $n$, $f(n) = z$. 
	Fix $m < n$ such that $f(m) = f(n) = z$. 
	Pick $z_m \in y_m$ and $z_n \in y_n$ such that $z_m <_{T} z$ and $z_n <_{T} z$. 
	Since $T$ is a tree, it follows that $z_m <_{T} z_n$, which contradicts our assumption about $y_m$ and $y_n$.
	\end{proof}

\begin{thm}
	Let $T$ be a tree of height $\omega_1$ with no uncountable chains, and let 
	$\langle x_\alpha : \alpha < \omega_1 \rangle$ 
	be a pairwise disjoint sequence of finite subsets of $T$. 
	Then there exists a $T^*$-proper forcing poset which adds an 
	uncountable set $A \subseteq \omega_1$ such that for all 
	$\alpha < \beta$ in $A$, every member of $x_\alpha$ is incomparable in $T$ 
	with every member of $x_\beta$.
	\end{thm}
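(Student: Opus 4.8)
The plan is to prove the theorem by producing a forcing poset $\p$ with finite conditions that approximate the desired set $A$, using countable elementary submodels as side conditions, and proving it $T^*$-proper by imitating the two-model reflection argument in the proof of Theorem 3.5. Fix a large regular cardinal $\theta$, and let $\mathcal X$ be the set of countable elementary substructures of $(H(\theta),\in,T^*,T,\langle x_\alpha : \alpha < \omega_1 \rangle)$. Call a finite $a \subseteq \omega_1$ \emph{coherent} if every member of $x_\alpha$ is incomparable in $T$ with every member of $x_\beta$ whenever $\alpha < \beta$ are in $a$, and call a coherent $a$ \emph{extendible} if $W(a) := \{ \gamma : a \cup \{\gamma\} \text{ is coherent} \}$ is uncountable; note $\emptyset$ is extendible since $W(\emptyset) = \omega_1$. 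Let $\p$ consist of pairs $p = (a_p,\mathcal N_p)$ where $a_p$ is a finite extendible set, $\mathcal N_p$ is a finite $\in$-chain of models in $\mathcal X$, and $a_p \cap M \in M$ for each $M \in \mathcal N_p$, ordered by $q \le p$ iff $a_p \subseteq a_q$, $\mathcal N_p \subseteq \mathcal N_q$, and $(a_q \setminus a_p) \cap M = \emptyset$ for each $M \in \mathcal N_p$. The generic object $A := \bigcup \{ a_p : p \in G \}$ has the required incomparability property by coherence, and one first checks the routine facts that conditions extend, that $\mathcal N_p$ may be enlarged by any $M \in \mathcal X$ with $p \in M$, and that $\p$ preserves $\omega_1$.

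The first real step is to show that $A$ is forced uncountable, i.e.\ that $D_\beta := \{ p \in \p : a_p \not\subseteq \beta \}$ is dense for each $\beta < \omega_1$. Given $p$ and $\beta$, enlarge $\mathcal N_p$ by some $N \in \mathcal X$ with $p, \beta \in N$, chosen $\in$-largest in the new chain; since $W(a_p) \in N$ and its defining parameters lie in $N$, by elementarity it suffices to find $\gamma \in N \cap W(a_p)$ with $\gamma > \beta$, with $\gamma$ above $M \cap \omega_1$ for every $M \in \mathcal N_p$, and with $a_p \cup \{\gamma\}$ again extendible, i.e.\ with $W(a_p) \cap W(\{\gamma\})$ uncountable. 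This is exactly where Baumgartner--Malitz--Reinhardt combinatorics enters: arguing in the spirit of Lemma 4.8 and Lemma 4.9 and using the Dushnik--Miller theorem, one shows that a tree with no uncountable chains carries no uncountable family of pairwise disjoint finite sets that is pairwise comparable (each pair having a comparable cross-pair of nodes). Applying this to $\{ x_\gamma : \gamma \in W(a_p) \}$ yields that $W(a_p) \cap W(\{\gamma\})$ is uncountable for unboundedly many $\gamma \in W(a_p)$, so such a $\gamma$ above the required bounds exists and can be found inside $N$; adjoining it to $a_p$ gives an extension of $p$ in $D_\beta$.

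The bulk of the work, and the main obstacle, is to prove that $\p$ is $T^*$-proper, which I would do by following the proof of Theorem 3.5. Let $N$ be a countable elementary substructure of $(H(\theta),\in,T^*,T,\langle x_\alpha \rangle,\p,\mathcal X)$, with $\delta := N \cap \omega_1$; note $N \in \mathcal X$. Given $p \in N \cap \p$, set $q := (a_p,\mathcal N_p \cup \{N\})$; then $q \le p$ and $q$ is $(N,\p)$-generic. To see $q$ is $(N,\p,T^*)$-generic, fix $x \in T^*_\delta$, which is $(N,T^*)$-generic by Lemma 1.5, a dense open $D \in N$, a $\p$-name $\dot A \in N$ for a subset of $T^*$, and $R \in \{ <_{T^*}, <_{T^*}^- \}$, and take $r \le q$ in $D$ deciding membership in $\dot A$; without loss of generality $r$ forces $x \in \dot A$. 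Let $B_0$ be the set of $z \in T^*$ for which there is $M \in \mathcal X$ with $D,\dot A \in M$, $M \cap \omega_1 = \h_{T^*}(z)$, and a condition $s \in D$ whose restriction below $M \cap \omega_1$ copies that of $r$ below $\delta$ (same real part, same chain of side-condition models, same extendibility data) and which forces $z \in \dot A$. Then $B_0 \in N$ and $x \in B_0$ as witnessed by $N$ and $r$; since $x$ is $(N,T^*)$-generic, pick $z <_{T^*} x$ in $B_0$, and pull witnesses $M, s$ for $z$ into $N$ by elementarity, so $z$ is $(M,T^*)$-generic by Lemma 1.5. One then constructs a ``glue'' condition $v \in M \cap \p$, uses that $z$ is $(M,T^*)$-generic on the set $B_1$ of nodes forced into $\dot A$ by extensions of $v$ in $D$ to obtain $y \ R \ z$ (hence $y \ R \ x$, since $z <_{T^*} x$) together with $w \in M \cap D$ below $v$ forcing $y \in \dot A$, and takes a common extension of $w$ and $r$, which lies below an element of $N \cap D$.

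The delicate heart of this last step, and what I expect to be the most technical part of the proof, is the construction of $v$ and the verification that every extension of $v$ inside $M$ is compatible with $r$. As in the proof of Theorem 3.5, $v$ should have real part $a_r \cap \delta = a_s \cap (M \cap \omega_1)$ together with finitely many indices relocated below $M \cap \omega_1$, chosen so as to remain coherent with the ``high'' nodes occurring in the $x_\beta$ for $\beta \in a_r \setminus \delta$ and to keep the real part extendible. Controlling the interaction with those high nodes, whose traces below $M \cap \omega_1$ are determined but not visible inside $M$, is what forces the conditions to carry, beyond the side-condition models, a finite record of the nodes of $T$ that future elements of $A$ must be incomparable with --- playing the role that the component $x_p$ played in the proof of Theorem 3.5. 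Reconciling the coherence, extendibility, and avoidance constraints of a low $M$-condition with those of $r$, and invoking the Baumgartner--Malitz--Reinhardt lemma once more relative to $M$ to preserve extendibility of the amalgamated real part, is where the argument should demand the most care.
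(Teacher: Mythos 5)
Your overall strategy — a finite-condition side-condition forcing, density of the uncountability sets via Baumgartner--Malitz--Reinhardt combinatorics, and a reflection argument for $T^*$-properness — is the right shape, and your density step is essentially sound. But your properness argument has a genuine gap at precisely the point you flag as ``the delicate heart,'' and the template you propose to resolve it cannot work.

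You transplant the two-model glue-condition strategy from Theorem 3.5: build $v \in M \cap \p$ such that \emph{every} extension of $v$ in $M$ is compatible with $r$, then use $(M,T^*)$-genericity of $z$ to pull out $w \le v$ in $M \cap D$. In the club-guessing forcing this works because the obstruction coming from $r$ --- the traces $L_\alpha \cap \delta_0$ for $\alpha \in x_r \setminus \delta$ --- is a finite set, and the finitely many guard entries $(\alpha_\beta,\beta)$ in $f_v$ suffice to exclude it. In the present setting the analogous obstruction is the set of indices $i < M \cap \omega_1$ for which some node of $x_i$ is $T$-comparable with some node of $\bigcup\{x_j : j \in a_r \setminus \delta\}$. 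Under the stratification hypothesis this set is typically countably infinite (for each high node $t$ there is a chain of heights below $\delta_0$, each potentially meeting a different $x_i$), so no finite side condition $v$ can shield against it. You half-recognize this and suggest adding ``a finite record of the nodes of $T$ that future elements of $A$ must be incomparable with,'' but that record would have to be an element of $M$ while the nodes in question have height $\ge M \cap \omega_1$ and are not in $M$; the traces of those nodes below $M \cap \omega_1$, which \emph{are} determined, are infinite. No modification along these lines closes the gap.

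The paper takes a materially different route that sidesteps the issue. Rather than seeking a $v$ compatible with all of $M$'s extensions, it works with a single side-condition model $N$ and a poset whose conditions already carry ``genericity'' and ``highness'' constraints relative to each model in the side-condition chain: in particular property (6) forces every index $i$ added above a model $K$ in the chain to dominate $f(K \cap \omega_1)$ for all $f : \omega_1 \to \omega_1$ in $K$. This feeds a function $F$ measuring, for each candidate node $z \in B_0$, the supremum of the minimal jump $\min(a_s \setminus \h_{T^*}(z))$ over witness conditions $s$; property (6) of $r$ then forces $F(x) = \omega_1$. Restricting to $B_1 := \{z : F(z) = \omega_1\}$, one picks $y \mathrel{R} x$ in $B_1$, builds inside $N$ a sequence $\langle s_i \rangle$ of witnesses with strictly increasing high parts, and then invokes Lemmas 4.8 and 4.9 to find a \emph{single} $s = s_\beta$ whose high part $\bigcup\{x_\alpha : \alpha \in a_{s_\beta} \setminus \delta_0\}$ is nodewise incomparable with $r$'s high part. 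That one $s$ amalgamates directly with $r$. The ask is far weaker than universal compatibility of extensions of a glue condition, and it is exactly what BMR combinatorics provides. Your poset lacks the analogue of property (6), so the $F$-function argument is not available in your setting either; without it, I do not see how to complete the amalgamation step.
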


\begin{proof}
	By thinning out and relabelling if necessary, we may assume without loss of generality that 
	for all $\alpha < \beta$, every node of $x_\alpha$ has height at least $\alpha$ and 
	less than the height of any member of $x_\beta$.
	
	Let us say that a set $N$ is a \emph{suitable model} if it is a countable elementary 
	substructure of $H(\omega_2)$ which contains $T$ as an element. 
	Define a forcing poset $\p$ as follows. 
	Conditions in $\p$ are pairs $(a,X)$ satisfying:
	\begin{enumerate}
		\item $a \subseteq \omega_1$ is finite;
		\item for all $i < j$ in $a$, every node in $x_i$ is incomparable in $T$ 
		with every node in $x_j$;
		\item $X$ is a finite $\in$-chain of suitable models;
		\item for all $i < j$ in $a$, there is $N \in X$ such that $i < N \cap \omega_1 < j$;
		\item if $N \in X$ and $i \in a \setminus N$, then for any set $Z \subseteq \omega_1$ 
		in $N$ with $i \in Z$, there are uncountably many $j \in Z$ such that every member of $x_i$ 
		is incomparable in $T$ with every member of $x_j$;
		\item if $N \in X$ and $i \in a \setminus N$, then for any function $f : \omega_1 \to \omega_1$ 
		in $N$, $f(N \cap \omega_1) < i$;
		\item if $a \ne \emptyset$, then $X \ne \emptyset$ and there is some $K \in X$ 
		such that $K \cap \omega_1 < \min(a)$.
		\end{enumerate}
	Let $(b,Y) \le (a,X)$ if $a \subseteq b$ and $X \subseteq Y$.
	For any condition $p \in \p$, we will write $p = (a_p,X_p)$.

	We begin with the claim that for all $p \in \p$, 
	there are uncountably many $\xi < \omega_1$ such that for all 
	$i \in a_p$, every member of $x_i$ is incomparable in $T$ with every member of $x_\xi$. 
	If $a_p = \emptyset$, then the claim holds vacuously. 
	Otherwise fix $n < \omega$ such that $|a_p| = n+1$.
	
	Let $i^*$ be the largest member of $a_p$.
	By the definition of $\p$, there exists $N \in X$ such that 
	$a_p \setminus \{ i^* \} \subseteq N$ and $i^* \notin N$. 
	Let $Z$ be the set of $\xi < \omega_1$ such that for all $i \in a_p \setminus \{ i^* \}$, 
	every member of $x_i$ is incomparable in $T$ with every member of $x_\xi$ (for clarity, 
	if $n = 0$ then $Z = \omega_1$, which is fine). 
	Note that $Z \in N$ by elementarity, and $i^* \in Z$. 
	By the definition of $\p$, 
	there are uncountably many $\xi \in Z$ such that 
	every member of $x_{i^*}$ is incomparable in $T$ with every member of $x_\xi$, which 
	proves the claim.
	
	Assume for a moment that $\p$ is $T^*$-proper. 
	Let $\dot A$ be a $\p$-name for the set 
	$$
	\bigcup \{ a_p : p \in \dot G_\p \}.
	$$
	Then by the definition of $\p$, it is forced that 
	for all $i < j$ in $\dot A$, every node 
	in $x_i$ is incomparable in $T$ with every node in $x_j$. 
	
	We claim that $\dot A$ is forced to be uncountable, which completes 
	the proof of the theorem 
	under the assumption that $\p$ is $T^*$-proper. 
	To prove this, consider a condition $p \in \p$ and $\gamma < \omega_1$. 
	We will find $q \le p$ so that $a_q \setminus \gamma \ne \emptyset$. 
	Fix a suitable model $N$ such that $p$ and $\gamma$ 
	are members of $N$. 
	Then easily $(a_p,X_p \cup \{ N \})$ is a condition below $p$. 
	Let $Y$ be the set of $\xi < \omega_1$ such that for all $i \in a_p$, 
	every member of $x_i$ is incomparable in $T$ with every member of $x_\xi$. 
	Then $Y \in N$ by elementarity, and $Y$ is uncountable by the claim above.
	
	Let $Y'$ be the set of $\xi \in Y$ which satisfy the property that for 
	all $Z \subseteq \omega_1$ in $N$ with $\xi \in Z$, there are uncountably many 
	$j \in Z$ such that every member of $x_\xi$ is incomparable in $T$ 
	with every member of $x_j$. 
	We claim that $Y'$ is uncountable. 
	If not, then there exists $\beta < \omega_1$ such that for all 
	$\xi \in Y \setminus \beta$, there is $Z_\xi \in N$ with $\xi \in Z_\xi$ 
	and $\delta_\xi < \omega_1$ such that for all 
	$j \in Z_\xi \setminus \delta_\xi$, there is a member of $x_\xi$ which is comparable in $T$ 
	with some member of $x_j$. 
	Since $N$ is countable, we can fix $Z \in N$ such that 
	there are uncountably many $\xi \in Y \setminus \beta$ for which $Z_\xi = Z$. 
	Now we can construct by induction an increasing sequence of ordinals 
	$\langle \xi_i : i < \omega_1 \rangle$ in $Y \setminus \beta$ such that for all 
	$i < \omega_1$, $Z_{\xi_i} = Z$, and for all $i < j < \omega_1$, $\delta_{\xi_i} \le \xi_j$. 
	But now for all $i < j$, $\xi_j \in Z_{\xi_i} \setminus \delta_{\xi_i}$, 
	so there is a member of $x_{\xi_i}$ which is comparable 
	in $T$ with a member of $x_{\xi_j}$. 
	Thus, the sequence $\langle x_{\xi_i} : i < \omega_1 \rangle$ is a counterexample to the 
	result of Baumgartner-Malitz-Reinhart, which is a contradiction.
		
	Since $Y'$ is uncountable, we can fix 
	$\xi \in Y'$ which is larger than $N \cap \omega_1$, $\gamma$, and 
	$f(N \cap \omega_1)$ for all $f : \omega_1 \to \omega_1$ in $N$. 
	Then by the definition of $Y'$, it is easy to check that 
	$(a_p \cup \{ \xi \}, X_p \cup \{ N \} )$ is a condition below $p$. 
	This completes the proof that $\dot A$ is forced to be uncountable.
	
	It remains to prove that $\p$ is $T^*$-proper. 
	Fix a regular cardinal $\theta > \omega_2$ such that $T^*$, $T$, and $\p$ 
	are members of $H(\theta)$, and let 
	$N$ be a countable elementary substructure of $(H(\theta),\in,T^*,T,\p)$. 
	Let $\delta := N \cap \omega_1$. 
	Consider $p \in N \cap \p$. 
	Define $q := (a_p,X_p \cup \{ N \cap H(\omega_2) \})$. 
	Then $q$ is a condition and $q \le p$. 

	We claim that $q$ is $(N,\p,T^*)$-generic. 	
	Fix a dense set $D \in N$, $x \in T^*_\delta$, 
	a $\p$-name $\dot A \in N$ for a subset 
	of $T^*$, and a relation $R \in \{ <_{T^*}, <_{T^*}^- \}$. 
	Let $r \le q$. 
	By extending further if necessary, we may assume that $r \in D$, $r$ decides whether 
	or not $x \in \dot A$, and $a_r \setminus \delta \ne \emptyset$. 
	If $r$ forces that $x \notin \dot A$, then replace $\dot A$ in what follows by the 
	canonical $\p$-name for $T^*$. 
	Thus, we may assume without loss of generality that 
	$r$ forces that $x \in \dot A$. 
		
	Our goal is to find $t \le r$ which is below some member of $N \cap D$ and a node 
	$y \ R \ x$ such that $t$ forces that $y \in \dot A$. 
	Let $r_N := (a_r \cap N,X_r \cap N)$. 
	It is easy to check that $r_N$ is in $N \cap \p$ and $r \le r_N$. 
	Let $B_0$ denote the set of nodes $z \in T^*$ 
	for which there exists a condition $s \in D$ 
	such that:
	\begin{enumerate}
		\item $s \le r_N$;
		\item $a_s \cap \h_{T^*}(z) = a_r \cap \delta$;
		\item $s$ forces that $z \in \dot A$.
		\end{enumerate}
	Observe that $B_0 \in N$ by elementarity, and $x \in B_0$ as witnessed by $r$.
	
	For each $z \in B_0$, let $B_0(z)$ denote the 
	set of conditions $s \in D$ which witness that $z \in B_0$. 
	The function $z \mapsto B_0(z)$ is in $N$ by elementarity, and $r \in B_0(x)$. 
	Define a function $F :  B_0 \to \omega_1 + 1$ 
	by 
	$$
	F(z) := \sup \{ \min(a_s \setminus \h_{T^*}(z)) : s \in B_0(z) \}.
	$$
	Note that $F \in N$ by elementarity. 
	Define 
	$$
	X := \{ z \in B_0 : F(z) < \omega_1 \}.
	$$
	Define $H : \omega_1 \to \omega_1$ by 
	$$
	H(\alpha) := \sup \{ F(z) : z \in X, \ \h_{T^*}(z) = \alpha \}.
	$$
	Then $X$ and $H$ are in $N$ by elementarity.
	
	Property (6) in the definition of $r$ being a condition implies that 
	$H(\delta) < \min(a_r \setminus \delta)$. 
	In particular, $x$ cannot be in $X$. 
	For otherwise $F(x)$ would be in the set for which $H(\delta)$ is the supremum, 
	which implies that $\min(a_r \setminus \delta) \le F(x) \le H(\delta)$, giving a contradiction. 
	Therefore, $F(x) = \omega_1$.
	
	Let $B_1 := \{ z \in B_0 : F(z) = \omega_1 \}$. 
	Then $B_1 \in N$ by elementarity, and $x \in B_1$ as just proven. 
	Since $x$ is $(N,T^*)$-generic, there exists $y \ R \ x$ such that $y \in B_1$. 
	Let $\delta_0 := \h_{T^*}(y)$. 
	Since $F$ and $y$ are in $N$ and $F(y) = \omega_1$,  
	it is straightforward to construct in $N$ a sequence 
	$\langle s_i : i < \omega_1 \rangle$ of conditions in $B_0(y)$ such that 
	for all $i < j < \omega_1$, 
	$\max(a_{s_i}) < \min(a_{s_j} \setminus \delta_0)$.
	
	For each $i < \omega_1$, let 
	$y_i := \bigcup \{ x_\alpha : \alpha \in a_{s_i} \setminus \delta_0 \}$. 
	Then $\langle y_i: i < \omega_1 \rangle$ is a pairwise disjoint sequence of 
	finite subsets of $T$. 
	Note that for all $i < j$, every node in $y_i$ has height in $T$ less than the height of every node in $y_j$. 
	Apply Lemma 4.8 inside $N$ to find a set $z \subseteq \omega_1$ in $N$ with order type $\omega$ 
	such that for all $i < j$ in $z$, every member of $y_i$ is incomparable in $T$ with every member of $y_j$. 
	Define $y := \bigcup \{ x_i : i \in a_r \setminus \delta \}$. 
	Note that since $z \in N$, for all $i \in z$, every node in $y_i$ has height less than $\delta$, 
	whereas every node in $y$ has height at least $\delta$. 
	So we can apply Lemma 4.9 to $\langle y_i : i \in z \rangle$ and $y$ 
	to find $\beta \in z$ such that every node in $y_\beta$ is incomparable in $T$ with every node in $y$.
	
	Let $s := s_\beta$. 
	We claim that $r$ and $s$ are compatible. 
	Define $t := (a_r \cup a_{s},X_r \cup X_{s})$. 
	We will show that $t$ is in $\p$. 
	Then clearly $t \le r, s$. 
	It is easy to check that $t$ satisfies all of the properties of being in $\p$ except possibly (2).
	
	Consider $i < j$ in $a_t$, and we will prove that every node in $x_i$ is 
	incomparable in $T$ with every node in $x_j$. 
	If $i$ and $j$ are either both in $a_r$ or both in $a_{s}$, then 
	the desired conclusion is true since 
	$r$ and $s$ are conditions. 
	Otherwise $i \in a_{s} \setminus a_r$ and 
	$j \in a_r \setminus a_s$. 
	Since $s \in B_0(y)$, $a_s \cap \delta_0 = a_r \cap \delta$. 
	Thus, $i \in a_s \setminus \delta_0$ and $j \in a_r \setminus \delta$. 
	So $x_i \subseteq y_\beta$ and $x_j \subseteq y$. 
	By the choice of $\beta$, 
	every node of $x_i$ is incomparable in $T$ with every node of $x_j$.
	
	So we have that $t \in \p$ and $t \le r, s$. 
	In particular, $t$ is below a member of $N \cap D$, namely $s$. 
	Also $y \ R \ x$ and $s$ forces that $y \in \dot A$. 
	So $t \le r$ and $t$ forces that $y \in \dot A$, 
	which completes the proof that $q$ is $(N,\p,T^*)$-generic.
	\end{proof}

\begin{corollary}
	Assume $\textsf{PFA}(T^*)$. 
	Let $T$ be a tree of height $\omega_1$ with no uncountable chains, and let 
	$\langle x_\alpha : \alpha < \omega_1 \rangle$ be a 
	pairwise disjoint sequence of finite subsets of $T$. 
	Then there exists an uncountable set $A \subseteq \omega_1$ such that for all 
	$\alpha < \beta$ in $A$, every member of $x_\alpha$ is incomparable in $T$ with every member of $x_\beta$.
	\end{corollary}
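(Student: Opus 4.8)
The plan is to derive this immediately from Theorem 4.10 by an application of \textsf{PFA}($T^*$). By that theorem there is a $T^*$-proper forcing poset $\p$ whose conditions are pairs $(a_p,X_p)$ as described there, together with a $\p$-name $\dot A$ for the set $\bigcup \{ a_p : p \in \dot G_\p \}$, and it is forced that for all $i < j$ in $\dot A$ every node in $x_i$ is incomparable in $T$ with every node in $x_j$. Since $\p$ is $T^*$-proper, by Proposition 2.5 the conclusion of \textsf{PFA}($T^*$) applies to $\p$ together with any $\omega_1$-indexed family of dense subsets of $\p$.

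First I would isolate the dense sets needed to make the generated set uncountable. For each $\gamma < \omega_1$ let
$$
D_\gamma := \{ q \in \p : a_q \setminus \gamma \neq \emptyset \}.
$$
That each $D_\gamma$ is dense is precisely the argument already carried out in the proof of Theorem 4.10: given $p \in \p$ and $\gamma < \omega_1$, one fixes a suitable model $N$ containing $p$ and $\gamma$, passes to the uncountable sets $Y$ and $Y'$, and chooses $\xi \in Y'$ above $N \cap \omega_1$, $\gamma$, and all values $f(N \cap \omega_1)$ for $f \colon \omega_1 \to \omega_1$ in $N$, obtaining a condition $(a_p \cup \{\xi\}, X_p \cup \{N\}) \leq p$ in $D_\gamma$. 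So I would simply cite that computation rather than repeat it.

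Next I would apply \textsf{PFA}($T^*$) to $\p$ and the sequence $\langle D_\gamma : \gamma < \omega_1 \rangle$ to obtain a filter $G$ on $\p$ with $G \cap D_\gamma \neq \emptyset$ for all $\gamma < \omega_1$, and set $A := \bigcup \{ a_q : q \in G \}$. Because $G$ meets every $D_\gamma$, the set $A$ is unbounded in $\omega_1$, hence uncountable. For the incomparability property, given $\alpha < \beta$ in $A$ I would pick $q_1, q_2 \in G$ with $\alpha \in a_{q_1}$ and $\beta \in a_{q_2}$, use that $G$ is a filter to find $q \in G$ with $q \leq q_1, q_2$ so that $\alpha, \beta \in a_q$, and then invoke clause (2) in the definition of $\p$ to conclude that every member of $x_\alpha$ is incomparable in $T$ with every member of $x_\beta$. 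This completes the argument.

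I do not expect a real obstacle in this corollary, since all the combinatorial difficulty resides in the construction and $T^*$-properness of $\p$ in Theorem 4.10. The only points that warrant care are invoking Proposition 2.5 so as to legitimately apply \textsf{PFA}($T^*$) to a forcing that has only been shown to be $T^*$-proper (rather than verified directly to be $T^*$-preserving), and noting that it is enough to meet the density requirements $D_\gamma$: clause (2) automatically delivers incomparability for any pair of ordinals lying in a single condition, and the filter property supplies the common extensions needed to bring two given elements of $A$ into one condition.
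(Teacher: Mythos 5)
Your proposal is correct and follows essentially the same route as the paper: both arguments rely entirely on Theorem 4.10 for the combinatorial content, then apply \textsf{PFA}$(T^*)$ (via Proposition 2.5) to a suitable $\omega_1$-sequence of dense sets. The only difference is a minor one in bookkeeping: you use the dense sets $D_\gamma = \{q \in \p : a_q \setminus \gamma \neq \emptyset\}$ (whose density is explicitly established in the proof of Theorem 4.10) and read $A$ off directly as $\bigcup\{a_q : q \in G\}$, invoking directedness of the filter and clause (2) of the definition of $\p$ to get incomparability; the paper instead uses the dense sets of conditions deciding the $\alpha$-th member of $\dot A$ and extracts $A$ as the set of decided values, appealing to absoluteness of the forced incomparability statements. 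Your version is slightly more self-contained in that it avoids the absoluteness appeal and works with the generic object's definition directly, but the two are interchangeable and have the same logical content.
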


\begin{proof}
	Let $\p$ be the $T^*$-proper forcing from Theorem 4.10, and let $\dot A$ be a $\p$-name for an 
	uncountable subset of $\omega_1$ such that for all $\alpha < \beta$ in $\dot A$, 
	every member of $x_\alpha$ is incomparable in $T$ with every member of $x_\beta$. 
	For each $\alpha < \omega_1$, let $D_\alpha$ be the dense set of conditions in $\p$ 
	which decide the $\alpha$-th member of $\dot A$. 
	Let $G$ be a filter on $\p$ which meets each dense set $D_\alpha$. 
	For each $\alpha$, let $\gamma_\alpha$ be the unique ordinal such that for some $p \in D_\alpha \cap G$, 
	$p$ forces that $\gamma_\alpha$ is the $\alpha$-th member of $\dot A$. 
	By the fact that $G$ is a filter, it is easy to check that for all $\alpha < \beta$, 
	every member of $x_{\gamma_\alpha}$ is incomparable in $T$ with every member of $x_{\gamma_\beta}$.
	\end{proof}

If we ignore $T^*$, then Theorem 4.10 shows more generally that there exists a proper forcing 
which adds a set $A$ as described in the theorem. 
Consequently, the conclusion of Corollary 4.11 follows from \textsf{PFA}.

\begin{corollary}
	$\textsf{PFA}(T^*)$ implies Suslin's hypothesis.
	\end{corollary}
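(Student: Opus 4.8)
The plan is to read off Suslin's hypothesis directly from Corollary 4.11 applied to singletons, so essentially all the work has already been done. Suppose toward a contradiction that Suslin's hypothesis fails, and fix an $\omega_1$-Suslin tree $T$, i.e.\ a tree of height $\omega_1$ with no uncountable chains and no uncountable antichains. In particular $T$ is a tree of height $\omega_1$ with no uncountable chains, so it meets the hypotheses of Corollary 4.11.

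Next I would manufacture a suitable pairwise disjoint family of finite subsets of $T$. Since $T$ has height $\omega_1$, there are nodes on $\omega_1$ many distinct levels, so I can fix a sequence $\langle t_\alpha : \alpha < \omega_1 \rangle$ of pairwise distinct nodes of $T$ (for instance, one node from each level $T_\alpha$). Setting $x_\alpha := \{ t_\alpha \}$, the sequence $\langle x_\alpha : \alpha < \omega_1 \rangle$ is a pairwise disjoint sequence of finite subsets of $T$, as required by the corollary.

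Applying Corollary 4.11 to $T$ and this sequence, under $\textsf{PFA}(T^*)$ there is an uncountable set $A \subseteq \omega_1$ such that for all $\alpha < \beta$ in $A$, every member of $x_\alpha$ is incomparable in $T$ with every member of $x_\beta$; since the $x_\alpha$ are singletons, this says precisely that $t_\alpha$ and $t_\beta$ are incomparable in $T$ whenever $\alpha < \beta$ are in $A$. Hence $\{ t_\alpha : \alpha \in A \}$ is an uncountable antichain of $T$, contradicting the Suslinity of $T$. Therefore no $\omega_1$-Suslin tree exists, which is Suslin's hypothesis. (Note this shows in particular that the fixed tree $T^*$ is itself non-Suslin under $\textsf{PFA}(T^*)$, consistently with the results quoted in the introduction — this is a conclusion, not an assumption.)

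I do not expect any real obstacle here: the substantive content is entirely contained in Theorem 4.10 and Corollary 4.11, and the only thing to keep straight is that the corollary is being invoked with singleton sets, so that ``incomparable members of $x_\alpha$ and $x_\beta$'' translates exactly into the statement that the chosen nodes form an antichain. (As the paper already observes after Corollary 4.11, the same argument with \textsf{PFA} in place of $\textsf{PFA}(T^*)$ recovers the classical fact that \textsf{PFA} implies Suslin's hypothesis.)
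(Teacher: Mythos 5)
Your proof is correct and follows essentially the same route as the paper: pick one node per level, form singleton sets $x_\alpha = \{t_\alpha\}$, and apply Corollary~4.11 to extract an uncountable antichain. The only cosmetic difference is that you frame it as a reductio beginning with a Suslin tree (so the no-uncountable-chains hypothesis of Corollary~4.11 is explicitly available), whereas the paper states it directly for an arbitrary $\omega_1$-tree; both amount to the same argument.
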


\begin{proof}
	Let $T$ be an $\omega_1$-tree. 
	For each $\alpha < \omega_1$, choose a node $t_\alpha$ of $T$ with height $\alpha$, and 
	define $x_\alpha := \{ t_\alpha \}$. 
	Apply Corollary 4.11 to the collection $\{ x_\alpha : \alpha < \omega_1 \}$ 
	to find an uncountable set $A \subseteq \omega_1$ so that 
	for all $\alpha < \beta$ in $A$, $t_\alpha$ and $t_\beta$ are incomparable in $T$. 
	Then $\{ t_\alpha : \alpha \in A \}$ is an uncountable antichain of $T$. 
	So $T$ is not an $\omega_1$-Suslin tree.
	\end{proof}

\begin{thm}
	Assume $\textsf{PFA}(T^*)$. 
	Let $\p$ be the forcing poset whose conditions are finite antichains of $T^*$, 
	ordered by reverse inclusion. 
	Then $\p$ is Knaster but not stationarily Knaster.
	\end{thm}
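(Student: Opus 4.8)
The plan is to prove the two assertions separately. The Knaster half will follow quickly from Corollary 4.11, and the other half will not even use the forcing axiom.

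For \emph{$\p$ is not stationarily Knaster}, I would argue straight from the standing hypothesis that $T^*$ has no stationary antichain. For each $\alpha < \omega_1$ fix a node $z_\alpha \in T^*_\alpha$ and set $p_\alpha := \{ z_\alpha \}$; a singleton is an antichain, so each $p_\alpha$ is a condition of $\p$. Suppose toward a contradiction that $\p$ is stationarily Knaster, and apply this to the stationary set $S := \omega_1$ and the family $\{ p_\alpha : \alpha \in S \}$ to obtain a stationary $T \subseteq \omega_1$ with $\{ p_\alpha : \alpha \in T \}$ linked. For $\alpha \ne \beta$ in $T$, compatibility of $p_\alpha$ and $p_\beta$ in $\p$ means that $p_\alpha \cup p_\beta = \{ z_\alpha, z_\beta \}$ is an antichain of $T^*$, i.e. $z_\alpha$ and $z_\beta$ are incomparable in $T^*$. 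Hence $\{ z_\alpha : \alpha \in T \}$ is an antichain of $T^*$ whose set of heights is exactly $T$, which is stationary; so it is a stationary antichain, contradicting the assumption on $T^*$.

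For \emph{$\p$ is Knaster}, I would combine the $\Delta$-system lemma with Corollary 4.11. Given conditions $\{ p_i : i < \omega_1 \}$, i.e. finite antichains of $T^*$, pass to an uncountable subfamily forming a $\Delta$-system with root $r$, so $p_i = r \cup x_i$ with the $x_i$ pairwise disjoint finite subsets of $T^*$. If uncountably many $x_i$ are empty, the corresponding conditions all equal $r$ and we are trivially done; otherwise discard the others and assume each $x_i \ne \emptyset$. Since $T^*$ is an $\omega_1$-Aronszajn tree (which persists under $\textsf{PFA}(T^*)$, as that axiom is stated under the standing assumption on $T^*$), it is in particular a tree of height $\omega_1$ with no uncountable chains, so Corollary 4.11 applied to $\langle x_i : i < \omega_1 \rangle$ produces an uncountable $A \subseteq \omega_1$ such that for all $\alpha < \beta$ in $A$, every member of $x_\alpha$ is incomparable in $T^*$ with every member of $x_\beta$. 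Then for any $\alpha < \beta$ in $A$ the set $p_\alpha \cup p_\beta = r \cup x_\alpha \cup x_\beta$ is an antichain of $T^*$: incomparabilities within $r$, within $x_\alpha$, within $x_\beta$, between $r$ and $x_\alpha$, and between $r$ and $x_\beta$ hold because $p_\alpha$ and $p_\beta$ are antichains, and incomparabilities between $x_\alpha$ and $x_\beta$ hold by the choice of $A$. Hence $p_\alpha \cup p_\beta$ is a common extension, so $\{ p_\alpha : \alpha \in A \}$ is linked and $\p$ is Knaster.

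I do not expect a genuine obstacle here: the real content — constructing a $T^*$-proper forcing adding an uncountable cross-incomparable index set, and deriving Corollary 4.11 from $\textsf{PFA}(T^*)$ — has already been carried out in Theorem 4.10 and Corollary 4.11. The only points needing a little care are the routine $\Delta$-system reduction (including the degenerate case of uncountably many empty petals) and the remark that $T^*$ remains an $\omega_1$-Aronszajn tree in a model of $\textsf{PFA}(T^*)$, so that Corollary 4.11 legitimately applies to it.
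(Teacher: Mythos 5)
Your proof is correct and follows essentially the same route as the paper: singletons $\{z_\alpha\}$ at each level to refute stationarily Knaster directly from the standing assumption on $T^*$, and a $\Delta$-system reduction combined with Corollary 4.11 to get the Knaster property. The only difference is your explicit handling of the degenerate case where uncountably many petals $x_i$ are empty, which the paper passes over silently; that is a harmless (and arguably prudent) extra sentence, since the thinning step in Theorem 4.10 tacitly assumes nonempty $x_\alpha$.
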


\begin{proof}
	Note that conditions $x$ and $y$ of $\p$ are compatible iff $x \cup y$ 
	is an antichain of $T^*$. 
	For each $\alpha < \omega_1$, pick an element $t_\alpha$ of $T^*$ with height $\alpha$, and let 
	$p_\alpha := \{ t_\alpha \}$. 
	Then $p_\alpha$ and $p_\beta$ are compatible in $\p$ iff $t_\alpha$ and $t_\beta$ 
	are incomparable in $T^*$. 
	
	If $S \subseteq \omega_1$, then $\{ p_\alpha : \alpha \in S \}$ is a linked subset of $\p$ 
	iff for all $\alpha < \beta$ in $S$, $t_\alpha$ and $t_\beta$ are incomparable in $T^*$ iff 
	$\{ t_\alpha : \alpha \in S \}$ is an antichain of $T^*$. 
	Since $T^*$ has no stationary antichains, if $S$ is stationary then 
	$\{ p_\alpha : \alpha \in S \}$ is not linked. 
	It follows that $\p$ is not stationarily Knaster.

	To see that $\p$ is Knaster assuming \textsf{PFA}($T^*$), 
	let $\{ q_\alpha : \alpha < \omega_1 \}$ be a family of conditions in $\p$. 
	By the $\Delta$-system lemma, we may assume without loss of generality that there is a finite 
	set $r \subseteq T^*$ such that for all $\alpha < \beta < \omega_1$, 
	$q_\alpha \cap q_\beta = r$. 

	Let $x_\alpha := q_\alpha \setminus r$ for all $\alpha < \omega_1$. 
	Then $\langle x_\alpha : \alpha < \omega_1 \rangle$ is a pairwise disjoint 
	sequence of finite subsets of $T^*$. 	
	By Corollary 4.11, there exists an uncountable set $A \subseteq \omega_1$ such that for all $\alpha < \beta$ 
	in $A$, every member of $x_\alpha$ is incomparable in $T^*$ with every member of $x_\beta$. 
	It immediately follows that for all $\alpha < \beta$ in $A$, $q_\alpha \cup q_\beta$ is an antichain of $T^*$, 
	and hence a condition below $q_\alpha$ and $q_\beta$.
	\end{proof}

\section{Digression: more on stationarily Knaster}

In the previous section we proved that under \textsf{PFA}($T^*$), there exists a forcing poset 
which is Knaster but not stationarily Knaster. 
We now prove that the same conclusion follows from $\Diamond^*$. 
Afterwards, we introduce and explore the property of having stationary precaliber $\omega_1$.

Recall that $\Diamond^*$ is the statement that there exists a sequence 
$\langle \mathcal S_\alpha : \alpha < \omega_1 \rangle$ satisfying:
\begin{enumerate}
	\item each $\mathcal S_\alpha$ is a collection of countably many subsets of $\alpha$;
	\item for all $X \subseteq \omega_1$, there exists a club $D \subseteq \omega_1$ such that 
	for all $\alpha \in D$, $X \cap \alpha \in \mathcal S_\alpha$.
	\end{enumerate}

\begin{thm}
	$\Diamond^*$ implies that there exists a forcing poset which is Knaster but not 
	stationarily Knaster.
	\end{thm}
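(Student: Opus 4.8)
The plan is to build, using $\Diamond^*$, an $\omega_1$-Aronszajn tree $T$ with no stationary antichain whose associated forcing poset $\p$ — the finite antichains of $T$ ordered by reverse inclusion — is Knaster. Granting such a $T$, the theorem is immediate: $\p$ is not stationarily Knaster by the argument in the proof of Theorem 4.13 (pick $t_\alpha\in T_\alpha$ and put $p_\alpha:=\{t_\alpha\}$; if $S$ is stationary and $\{p_\alpha:\alpha\in S\}$ is linked, then $\{t_\alpha:\alpha\in S\}$ is a stationary antichain of $T$, contradicting the choice of $T$), while $\p$ is Knaster by the construction. So all the work goes into building $T$.

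To make $\p$ Knaster it suffices, by the $\Delta$-system lemma, to arrange for $T$ the conclusion of Corollary 4.11: for every pairwise disjoint sequence $\langle x_\alpha:\alpha<\omega_1\rangle$ of finite subsets of $T$ there is an uncountable $A\subseteq\omega_1$ such that for all $\alpha<\beta$ in $A$, every node of $x_\alpha$ is incomparable in $T$ with every node of $x_\beta$ — for then, writing an arbitrary uncountable family of conditions as $q_\alpha=r\cup x_\alpha$ with root $r$ and $x_\alpha$ pairwise disjoint, the family $\{q_\alpha:\alpha\in A\}$ is linked, exactly as in the last two paragraphs of the proof of Theorem 4.13. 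Accordingly, I would construct $T$ by recursion on its levels, on an underlying set identified with $\omega_1$ via a continuous increasing assignment of $T\restrict\alpha$ to an initial segment of $\omega_1$, and use the $\Diamond^*$-sequence $\langle\mathcal S_\alpha:\alpha<\omega_1\rangle$ — transported along a pairing function so that it also guesses $\omega_1$-sequences of finite subsets — at limit stages $\alpha$. Among the countably many members of $\mathcal S_\alpha$, those guesses that code an initial segment of a pairwise disjoint sequence of finite subsets are used to devote some new level-$\alpha$ nodes to being incomparable with the finitely many relevant nodes, book-kept so that for a genuine potential counterexample these dedicated nodes accumulate into an uncountable cross-incomparable set; and those guesses that code antichains of $T\restrict\alpha$ are used — carefully, and not indiscriminately — to seal off would-be stationary antichains in the style of the $\Diamond$-construction of a Suslin tree, the point being that a stationary antichain of the final $T$, extended to a maximal one, is caught at stationarily many levels by the $\Diamond^*$ catching property together with the usual club of levels at which its restriction is a maximal antichain. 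One then verifies, level by level, that $T$ is an $\omega_1$-tree with no cofinal branch, that the sealing yields no stationary antichain, and that the dedicated nodes yield the strengthened Baumgartner–Malitz–Reinhardt property above.

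The hard part is the limit stage, where the two jobs pull against each other: forcing new branches through a guessed antichain constrains them, and sealing every guessed maximal antichain would produce a Suslin tree, for which $\p$ is not Knaster (the singleton family already fails property (b)); yet the Baumgartner–Malitz–Reinhardt requirement demands that each new level be rich in nodes incomparable with prescribed finite sets, so $T$ must be far from Suslin. The heart of the argument is to reconcile these — carrying out the incomparability on one block of new nodes and the (selective) sealing on a disjoint block, exploiting that only countably many guesses are active at each limit stage so that both can be done while keeping the levels countable and adding no cofinal branch, and setting up the book-keeping so that each potential counterexample sequence genuinely collects uncountably many mutually incomparable contributions from its club of capture points, while still ensuring that every stationary antichain meets a level at which it gets killed. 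Once $T$ is in hand, Knaster-ness of $\p$ and its failure of stationary Knaster-ness are both routine reductions to the arguments already given for Theorem 4.13.
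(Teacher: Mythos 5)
Your approach is fundamentally different from the paper's, and I do not believe it can be completed as sketched. The paper never touches trees at all here: it diagonalizes the $\Diamond^*$-sequence against functions $\omega_1\to\omega$ to build a colored ladder system $\langle (L_\alpha,g_\alpha):\alpha\in\lim(\omega_1)\rangle$ with $g_\alpha\neq f\restrict L_\alpha$ for every $f\in\mathcal F_\alpha$, takes $\p$ to be the Devlin--Shelah uniformization forcing for this colored ladder system, and shows $\p$ is Knaster by a $\Delta$-system pressing-down argument; the failure of stationary Knaster is then immediate, since a stationary linked family of the conditions $(\{\alpha\},g_\alpha)$ would glue into a uniformizing $F:\omega_1\to\omega$ that the $\Diamond^*$-sequence must capture on a club, contradicting the diagonalization. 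You instead propose to build an $\omega_1$-Aronszajn tree $T$ with no stationary antichain for which the finite-antichain forcing is Knaster, so that the construction in effect establishes, under $\Diamond^*$ alone, the conclusion of Corollary 4.11 for a concrete $T$ --- a statement the paper only obtains from the much stronger (and incompatible with $\Diamond^*$, since it implies $\mathfrak p>\omega_1$) hypothesis \textsf{PFA}$(T^*)$. That is a substantially stronger claim than the theorem being proved, and you do not establish it.

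The gap is concrete and lies exactly where you locate "the hard part." First, the Knaster-izing step does not go through conceptually: at a limit stage $\delta$ where $\Diamond^*$ catches an initial segment $\langle x_\gamma:\gamma<\delta\rangle$ of a pairwise disjoint sequence of finite subsets, you propose to add nodes at level $\delta$ incomparable with the finitely many relevant nodes and to "book-keep so that these dedicated nodes accumulate into an uncountable cross-incomparable set." But the uncountable set required by Corollary 4.11 is a set of \emph{indices} $A\subseteq\omega_1$ such that the \emph{given} $x_\alpha$'s for $\alpha\in A$ are mutually cross-incomparable; the $x_\gamma$'s for $\gamma\geq\delta$ are arbitrary finite subsets of the not-yet-built part of $T$, and putting fresh nodes at level $\delta$ does nothing to control the comparability relations among the future $x_\gamma$'s themselves, nor to force the sequence to pass through your dedicated nodes. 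Second, the "careful, not indiscriminate" sealing is left entirely unspecified: at stage $\delta$ you cannot yet tell whether a guessed maximal antichain of $T\restrict\delta$ will grow into a stationary antichain of the final $T$, and if you seal every such guess (which is what the $\Diamond^*$-catching would naturally suggest) you produce a Suslin tree, for which, as you yourself observe, the finite-antichain forcing is not even Knaster. No mechanism is given for choosing which antichains to seal in a way that both kills every stationary antichain and leaves enough uncountable antichains for Knasterness. So the central construction is not carried out, and the proposal as written is not a proof.
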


\begin{proof}
	Fix a sequence $\langle \mathcal F_\alpha : \alpha < \omega_1 \rangle$ satisfying:
	\begin{enumerate}
		\item each $\mathcal F_\alpha$ is a collection of countably many functions 
		from $\alpha$ to $\omega$;
		\item for any function $F : \omega_1 \to \omega$, 
		there exists a club $D \subseteq \omega_1$ such that for all $\alpha \in D$, 
		$F \restrict \alpha \in \mathcal F_\alpha$.
		\end{enumerate}
	Standard arguments show that $\Diamond^*$ implies the existence of such a sequence.
	
	For each limit ordinal $\alpha < \omega_1$, fix a cofinal subset $L_\alpha$ of $\alpha$ 
	with order type $\omega$. 
	Let $\alpha < \omega_1$ be a limit ordinal, and consider the countable 
	collection of functions $\{ f \restrict L_\alpha : f \in \mathcal F_\alpha \}$. 
	An easy diagonalization argument shows that there exists a function 
	$g_\alpha : L_\alpha \to \omega$ satisfying that for all $f \in \mathcal F_\alpha$, 
	there exists $\gamma < \alpha$ such that 
	for all $\beta \in L_\alpha \setminus \gamma$, $f(\beta) < g_\alpha(\beta)$. 
	In particular, for all $f \in \mathcal F_\alpha$, $g_\alpha \ne f \restrict L_\alpha$.
	
	We now consider the forcing poset from \cite{devlin} for adding a uniformisation of 
	the colored ladder system $\langle (L_\alpha,g_\alpha) : \alpha \in \lim(\omega_1) \rangle$. 
	Let $\p$ be the forcing poset whose conditions are pairs $(x,h)$ satisfying:
	\begin{enumerate}
		\item $x \subseteq \lim(\omega_1)$ is finite;
		\item $h : \bigcup \{ L_\alpha : \alpha \in x \} \to \omega$;
		\item for all $\alpha \in x$ there is $\gamma < \alpha$ such that for all 
		$\beta \in L_\alpha \setminus \gamma$, $h(\beta) = g_\alpha(\beta)$.
		\end{enumerate}	
	Let $(x_2,h_2) \le (x_1,h_1)$ if $x_1 \subseteq x_2$ and $h_1 \subseteq h_2$.
	
	We claim that $\p$ is Knaster but not stationarily Knaster. 
	To see that it is Knaster, we argue in a similar way as in \cite{devlin}. 
	Suppose that $\langle (x_i,h_i) : i < \omega_1 \rangle$ is a sequence of conditions in $\p$. 
	Applying the $\Delta$-system lemma and a standard thinning out argument, we can find 
	an uncountable set $Z \subseteq \omega_1$ such that for some set $r$, 
	for all $i < j$ in $Z$, $x_i \cap x_j = r$ and $\max(x_i) < \min(x_j \setminus r)$. 
	By relabeling the sequence if necessary, we may assume without loss of generality 
	that $Z = \omega_1$.
	
	For each $i < \omega_1$, choose a sequence $\langle \gamma^i_\alpha : \alpha \in r \rangle$ 
	such that for each $\alpha \in r$, $\gamma^i_\alpha < \alpha$ and 
	for all $\beta \in L_\alpha \setminus \gamma^i_\alpha$, 
	$h_i(\beta) = g_\alpha(\beta)$. 
	By a pressing-down argument, 
	we can find $Y_1 \subseteq \omega_1$ stationary so that for all $i < j$ in $Y_1$:
	\begin{enumerate}
		\item $\gamma^i_\alpha = \gamma^j_\alpha$ for all $\alpha \in r$;
		\item $h_i \restrict (L_\alpha \cap \gamma^i_\alpha) = 
		h_j \restrict (L_\alpha \cap \gamma^j_\alpha)$ for all $\alpha \in r$;
		\item $x_i \cap i = x_j \cap j = r$;
		\item $\max(x_i) < j$.
		\end{enumerate}
	Observe that (1) and (2) imply that for all $i < j$ in $Y_1$, 
	$h_i \restrict \bigcup \{ L_\alpha : \alpha \in r \} = 
	h_j \restrict \bigcup \{ L_\alpha : \alpha \in r \}$.
	
	For each $i \in Y_1$, let $i^+$ denote the least member of $Y_1$ strictly above $i$. 
	Then by construction, for each $i \in Y_1$, 
	$$
	\min(x_{i^+} \setminus r) > \max(x_i) \ge i.
	$$
	Consequently, the set $\bigcup \{ L_\alpha \cap i : \alpha \in x_{i^+} \setminus r \}$ 
	is a finite subset of $i$. 
	So we can find a stationary set $Y_2 \subseteq Y_1$ and a set $s$ 
	such that for all $i \in Y_2$, 
	$$
	\bigcup \{ L_\alpha \cap i : \alpha \in x_{i^+} \setminus r \} = s.
	$$	
	Now find $Y_3 \subseteq Y_2$ stationary such that for all $i < j$ in $Y_3$, 
	$i^+ < j$ and $h_{i^+} \restrict s = h_{j^+} \restrict s$.
	It is now easy to check that for all $i < j$ in $Y_3$, 
	$(x_{i^+} \cup x_{j^+},h_{i^+} \cup h_{j^+})$ is a lower bound of 
	$(x_{i^+},h_{i^+})$ and $(x_{j^+},h_{j^+})$. 
	This completes the proof that $\p$ is Knaster.
	
	Now we prove that $\p$ is not stationarily Knaster. 
	For each limit ordinal $\alpha < \omega_1$, 
	define $p_\alpha := (\{ \alpha \},g_\alpha)$. 
	Then obviously $p_\alpha \in \p$. 
	Suppose for a contradiction that there exists 
	a stationary set $T \subseteq \lim(\omega_1)$ 
	such that for all $i < j$ in $T$, $p_i$ and $p_j$ are compatible. 
	Then for all $i < j$ in $T$, 
	$(\{ i, j \}, g_i \cup g_j) \in \p$, and in particular, 
	$g_i \cup g_j$ is a function. 
	So for all $\gamma \in L_i \cap L_j$, $g_i(\gamma) = g_j(\gamma)$.
	
	Define a function $F : \omega_1 \to \omega$ so that for all $\gamma < \omega_1$, 
	$F(\gamma) = g_i(\gamma)$ if $i \in T$ and $\gamma \in L_i$, and if there is no such $i$, 
	then $F(\gamma) = 0$. 
	By the choice of $T$, $F$ is well-defined. 
	By the choice of the sequence 
	$\langle \mathcal F_\alpha : \alpha \in \lim(\omega_1) \rangle$, 
	there exists a club $D \subseteq \omega_1$ 
	such that for all $\alpha \in D$, $F \restrict \alpha \in \mathcal F_\alpha$. 
	Since $T$ is stationary, we can fix $\alpha \in T \cap D$. 
	Then $F \restrict \alpha = f$ for some $f \in \mathcal F_\alpha$. 
	So 
	$$
	f \restrict L_\alpha = F \restrict L_\alpha = g_\alpha
	$$
	But this contradicts the choice of $g_\alpha$.
	\end{proof}

Recall that a forcing poset $\p$ has \emph{precaliber $\omega_1$} if whenever 
$X \subseteq \p$ is uncountable, then there exists 
an uncountable set $Y \subseteq X$ which is centered. 
With the difference between Knaster and stationarily Knaster in mind, we introduce 
the following variation of this idea.

\begin{definition}
	A forcing poset $\p$ has \emph{stationary precaliber $\omega_1$} if whenever 
	$\{ p_i : i \in S \} \subseteq \p$, where $S \subseteq \omega_1$ is stationary, 
	then there exists a stationary set $T \subseteq \omega_1$ such that 
	$\{ p_i : i \in T \}$ is centered.
	\end{definition}

\begin{lemma}
	If $\p$ is a $\sigma$-centered forcing poset, then $\p$ has 
	stationary precaliber $\omega_1$.
\end{lemma}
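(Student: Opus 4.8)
The plan is to mimic the proof of Lemma 4.4 almost verbatim, replacing ``linked'' by ``centered'' throughout. First I would fix a decomposition $\p = \bigcup \{ X_n : n < \omega \}$ where each $X_n$ is centered, which exists by the assumption that $\p$ is $\sigma$-centered. Then, given a family $\{ p_i : i \in S \}$ of conditions in $\p$ with $S \subseteq \omega_1$ stationary, I would define a function $g : S \to \omega$ by letting $g(i)$ be the least $n < \omega$ such that $p_i \in X_n$.

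Next I would invoke the standard pigeonhole fact that $\omega_1$ is not the union of countably many non-stationary sets, so that a function from a stationary subset of $\omega_1$ into $\omega$ must be constant on a stationary set. Applying this to $g$ yields a stationary $T \subseteq S$ and some $n < \omega$ with $g(i) = n$ for all $i \in T$; hence $\{ p_i : i \in T \} \subseteq X_n$, and since $X_n$ is centered, so is $\{ p_i : i \in T \}$. This gives exactly the conclusion that $\p$ has stationary precaliber $\omega_1$. There is no real obstacle here: the statement is simply the ``centered'' analogue of Lemma 4.4, and the argument is the same single paragraph, the only substantive point being the pressing-down/pigeonhole observation about functions from stationary sets into $\omega$.
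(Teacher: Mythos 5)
Your proof is correct and is exactly the paper's intended argument: the paper's proof of Lemma 5.3 simply says ``Similar to the proof of Lemma 4.4,'' and you have spelled out that adaptation faithfully, replacing ``linked'' by ``centered'' and using the fact that a function from a stationary set into $\omega$ is constant on a stationary set.
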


\begin{proof}
	Similar to the proof of Lemma 4.4.
	\end{proof}

\begin{proposition}
	$\textsf{MA}_{\omega_1}$ implies that every $\omega_1$-c.c.\! forcing poset 
	has stationary precaliber $\omega_1$.
	\end{proposition}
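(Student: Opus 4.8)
The plan is to mirror the argument sketched after Proposition 4.6, using Lemma 5.3 in place of Lemma 4.4. Assume $\textsf{MA}_{\omega_1}$, let $\p$ be an $\omega_1$-c.c.\ forcing poset, and fix a family $\{ p_i : i \in S \}$ of conditions in $\p$ with $S \subseteq \omega_1$ stationary. The goal is to produce a stationary set $T \subseteq S$ such that $\{ p_i : i \in T \}$ is centered.

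The first step is to cut $\p$ down to a subposet of size at most $\omega_1$. Fix a sufficiently large regular cardinal $\theta$ and an elementary substructure $N$ of $(H(\theta),\in,\p,\langle p_i : i \in S \rangle)$ with $|N| = \omega_1$ and $\omega_1 \subseteq N$; such an $N$ can be taken to be the union of an increasing $\omega_1$-chain of countable elementary substructures. Let $\q := \p \cap N$, ordered as a suborder of $\p$. Since $\omega_1 \subseteq N$ and $\langle p_i : i \in S \rangle \in N$, each $p_i$ belongs to $\q$, and clearly $|\q| \le \omega_1$. The key observation is that $\q$ is again $\omega_1$-c.c.: if $p$ and $q$ are conditions in $\q$ with a common lower bound in $\p$, then, as $\p$, $p$, and $q$ all lie in $N$, elementarity gives a common lower bound inside $N$, hence inside $\q$; thus two members of $\q$ are incompatible in $\q$ precisely when they are incompatible in $\p$, so every antichain of $\q$ is an antichain of $\p$, and the $\omega_1$-chain condition of $\p$ passes to $\q$.

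Now I would invoke the well-known consequence of $\textsf{MA}_{\omega_1}$ already used for Proposition 4.6 --- that every $\omega_1$-c.c.\ forcing poset of size at most $\omega_1$ is $\sigma$-centered --- to conclude that $\q$ is $\sigma$-centered. By Lemma 5.3, $\q$ then has stationary precaliber $\omega_1$. Applying this to the family $\{ p_i : i \in S \} \subseteq \q$ produces a stationary set $T \subseteq S$ such that $\{ p_i : i \in T \}$ is centered in $\q$; and since $\q \subseteq \p$, any finite subset of $\{ p_i : i \in T \}$ with a lower bound in $\q$ has one in $\p$, so $\{ p_i : i \in T \}$ is centered in $\p$. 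Hence $\p$ has stationary precaliber $\omega_1$.

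The only point requiring a little care --- and what I would single out as the (minor) crux --- is the verification that the reduct $\q = \p \cap N$ inherits the $\omega_1$-chain condition, which is exactly where the elementarity of $N$ is used; this is the same reduction that is implicit when the $\sigma$-centeredness fact is applied in Proposition 4.6. Everything else is a direct citation of Lemma 5.3 and of that fact.
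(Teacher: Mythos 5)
Your proposal is correct and follows the same route the paper takes: reduce to a subposet of size $\omega_1$, invoke the $\textsf{MA}_{\omega_1}$ fact that $\omega_1$-c.c.\ posets of size $\omega_1$ are $\sigma$-centered, and apply Lemma 5.3. The paper leaves the reduction step implicit (both here and in Proposition 4.7), and you have simply spelled out that reduction via the elementary substructure $N$ and the observation that $\q = \p \cap N$ inherits the $\omega_1$-chain condition and that centeredness in $\q$ yields centeredness in $\p$.
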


\begin{proof}
	Similar to the proof of Proposition 4.7, but using 
	Lemma 5.3 instead of Lemma 4.4.
	\end{proof}

On the other hand, it is consistent that there exists a forcing poset which is stationarily 
Knaster but does not have stationary precaliber $\omega_1$.

\begin{proposition}
	Assume that $\mathfrak t = \omega_1$. 
	Then there exists a forcing poset which is stationarily Knaster but does not have 
	stationary precaliber $\omega_1$. 
	In particular, this is true if \textsf{CH} holds.
	\end{proposition}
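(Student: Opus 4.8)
The plan is to produce a $\sigma$-linked forcing poset $\p$ which does \emph{not} have stationary precaliber $\omega_1$. By Lemma 4.4 any such $\p$ is automatically stationarily Knaster, so this suffices; and since every $\sigma$-centered poset \emph{does} have stationary precaliber $\omega_1$ by Lemma 5.3, our example is necessarily $\sigma$-linked but not $\sigma$-centered. The hypothesis $\mathfrak t=\omega_1$ enters by supplying a $\subseteq^*$-decreasing tower out of which $\p$ is built; and since $\omega_1\le\mathfrak t\le\mathfrak c$, the assumption $\mathfrak t=\omega_1$ holds under \textsf{CH}, which takes care of the ``in particular'' clause.

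First I would fix a $\subseteq^*$-decreasing sequence $\vec A=\langle A_\alpha:\alpha<\omega_1\rangle$ of infinite subsets of $\omega$ with no pseudo-intersection, which exists since $\mathfrak t=\omega_1$; after thinning I may assume the convenient normalization $A_{\alpha+1}\subseteq A_\alpha$, and it is standard that then every uncountable subfamily of $\vec A$ again has no pseudo-intersection. The poset $\p$ will be a finite-approximation poset coding $\vec A$: conditions consist of a finite ``stem'' (a finite subset of $\omega$, or a finite partial function on $\omega$) together with a finite side condition drawn from $\omega_1$, and the order is arranged so that (i) any two conditions sharing a stem and a side-condition ``type'' are compatible --- this decomposes $\p$ into countably many linked pieces and so yields $\sigma$-linkedness --- while (ii) there is a stationary $S\subseteq\omega_1$ and a pairwise-compatible family $\langle p_\alpha:\alpha\in S\rangle$ such that for every stationary $T\subseteq S$ some finite subfamily of $\{p_\alpha:\alpha\in T\}$ has no common lower bound, so that no stationary subfamily is centered and $\p$ fails stationary precaliber $\omega_1$. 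The role of the tower is to make these ``bad'' finite configurations stationarily dense: arguing by contradiction, a stationary $T$ all of whose finite subfamilies $\{p_\alpha:\alpha\in F\}$ are compatible should, after a pressing-down argument inside a countable elementary submodel, yield an infinite subset of $\omega$ which is a pseudo-intersection of $\{A_\alpha:\alpha\in T\}$, contradicting the choice of $\vec A$. This is the same architecture as the $\Diamond^*$ construction in Theorem 5.1, with the tower $\vec A$ used --- as the $\Diamond^*$-sequence is used there --- to defeat the relevant stationary homogeneity, here stationary centeredness; and $\p$ may be taken of size $\omega_1$, so the bookkeeping threading these conditions together is available even when $\mathfrak c>\omega_1$.

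The main obstacle is satisfying (i) and (ii) simultaneously: the ordering on $\p$ must be loose enough that same-stem, same-type conditions always amalgamate (so that $\p$ is $\sigma$-linked, and in particular the chosen family $\langle p_\alpha\rangle$ is linked), yet the way $\vec A$ is coded into the side conditions must be tight enough that stationarily many of the $p_\alpha$'s cannot be centered --- i.e.\ cannot be amalgamated along a stationary set without manufacturing the forbidden pseudo-intersection. Finding the precise definition of $\p$ that strikes this balance is the delicate part. Once it is in hand, the remaining work --- checking transitivity of the order, exhibiting the countably many linked pieces, and running the pressing-down / elementary-submodel argument that converts a putative stationary centered subfamily into a pseudo-intersection of an uncountable sub-tower --- is routine.
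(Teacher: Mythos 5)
Your high-level reduction is correct and matches the paper's: it suffices to exhibit a $\sigma$-linked poset without stationary precaliber $\omega_1$, since Lemma 4.4 then gives stationarily Knaster for free (and you correctly note such a poset must fail to be $\sigma$-centered, by Lemma 5.3). Your observation that a tower of length $\omega_1$ is the natural combinatorial input is also on target. But there is a genuine gap: you never actually define the forcing poset. You describe what its properties should be --- finite stems, finite side conditions, same-stem/same-type amalgamation for $\sigma$-linkedness, a coding of $\vec A$ tight enough to defeat stationary centeredness --- and then explicitly concede that ``finding the precise definition of $\p$ that strikes this balance is the delicate part.'' That delicate part \emph{is} the proof; the architecture sketch does not substitute for it.

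The paper sidesteps this by citing a black-box result, \cite[Theorem 1.3]{TV}, which produces a $\sigma$-linked forcing poset of size $\mathfrak t$ with no centered subset of size $\mathfrak t$. Under $\mathfrak t = \omega_1$ this yields a $\sigma$-linked $\p$ of size $\omega_1$ with no centered subset of size $\omega_1$, from which the failure of stationary precaliber is immediate and uniform: take \emph{any} $\omega_1$-sequence $\langle p_i : i < \omega_1 \rangle$ of distinct conditions; every stationary $T$ has size $\omega_1$, so $\{p_i : i \in T\}$ cannot be centered. Note this is actually a stronger and cleaner target than what you aim for --- you try to engineer a specific stationary family such that no stationary subfamily is centered, whereas the cited poset has \emph{no} uncountable centered subfamily at all, so there is nothing to engineer. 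If you wanted a self-contained argument you would need to either reproduce the construction behind \cite[Theorem 1.3]{TV} or give a different explicit $\sigma$-linked-not-$\sigma$-centered poset of size $\omega_1$ built from your tower and then verify the two properties; as written, the proposal identifies the right shape of the argument but leaves the essential construction unfinished.
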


\begin{proof}
	By \cite[Theorem 1.3]{TV}, there exists a forcing poset of size $\mathfrak t$ 
	which is $\sigma$-linked but has no centered subsets of size $\mathfrak t$. 
	Assuming $\mathfrak t = \omega_1$, it follows that there exists a forcing poset $\p$ of size 
	$\omega_1$ which is 
	$\sigma$-linked but has no centered subsets of size $\omega_1$.
	
	Since $\p$ is $\sigma$-linked, $\p$ is stationarily Knaster by Lemma 4.4. 
	Fix a sequence $\langle p_i : i < \omega_1 \rangle$ of distinct conditions in $\p$. 
	Then for any stationary set $T \subseteq \omega_1$, 
	$\{ p_i : i \in T \}$ is not centered, since $\p$ 
	contains no centered subsets of size $\omega_1$. 
	Thus, $\p$ does not have stationary precaliber $\omega_1$.
	\end{proof}

Since $\Diamond^*$ implies \textsf{CH}, the following corollary is immediate from the previous results.

\begin{corollary}
	Assume $\Diamond^*$. 
	Then there exists a forcing poset which is Knaster but not stationarily Knaster, and there exists 
	a forcing poset which is stationarily Knaster but does not have stationary precaliber $\omega_1$.
	\end{corollary}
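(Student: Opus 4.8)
The plan is to simply combine Theorem 5.1 and Proposition 5.5, using the standard implications of $\Diamond^*$. For the first assertion, Theorem 5.1 already states that $\Diamond^*$ implies the existence of a forcing poset which is Knaster but not stationarily Knaster, so there is nothing further to do there.

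For the second assertion, I would first recall that $\Diamond^*$ implies $\mathsf{CH}$: indeed, given a $\Diamond^*$-sequence $\langle \mathcal S_\alpha : \alpha < \omega_1 \rangle$, every subset $X$ of $\omega$ satisfies $X = X \cap \omega \in \mathcal S_\omega$ (taking $\alpha = \omega$ in clause (2), noting $\omega$ belongs to every club), and $\mathcal S_\omega$ is countable, so there are only countably many subsets of $\omega$; combined with $2^\omega \geq \omega_1$ this gives $2^\omega = \omega_1$. Since $\omega_1 \leq \mathfrak t \leq 2^\omega$ always holds, $\mathsf{CH}$ yields $\mathfrak t = \omega_1$. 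I would then invoke Proposition 5.5 with this hypothesis to obtain a forcing poset which is stationarily Knaster but does not have stationary precaliber $\omega_1$, completing the proof.

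There is no real obstacle here: the corollary is a bookkeeping consequence of the two named results together with the elementary fact that $\Diamond^*$ implies $\mathsf{CH}$. The only point requiring a sentence of justification is precisely that implication (and the trivial inequality $\mathfrak t \leq 2^\omega$), and both are entirely standard. I would keep the write-up to two or three sentences.
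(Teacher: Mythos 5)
Your proposal takes the same route as the paper: combine Theorem 5.1 (for the first forcing poset) with Proposition 5.5 (for the second), via the standard facts that $\Diamond^*$ implies \textsf{CH} and \textsf{CH} implies $\mathfrak{t}=\omega_1$. The paper dispenses with this in one sentence, so structurally you are in agreement.

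However, your justification of ``$\Diamond^*$ implies \textsf{CH}'' contains an error: you claim that $\omega$ belongs to every club subset of $\omega_1$, which is false --- for instance, $\omega_1\setminus(\omega+1)$ is a club omitting $\omega$, as is the club of limit ordinals above $\omega$. Clause (2) of $\Diamond^*$ only guarantees that \emph{some} club $D_X$ works for each $X\subseteq\omega$, and you cannot take $\alpha=\omega$. The correct argument is: for each $X\subseteq\omega$, choose any $\alpha\in D_X$ with $\alpha\geq\omega$ (possible by unboundedness); then $X=X\cap\alpha\in\mathcal S_\alpha$, so every subset of $\omega$ lies in $\bigcup_{\omega\leq\alpha<\omega_1}\mathcal S_\alpha$, a set of size at most $\omega_1$, giving $2^\omega\leq\omega_1$. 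The difference is that you don't get all of $\mathcal P(\omega)$ captured in a single countable $\mathcal S_\omega$; you need to range over all $\alpha<\omega_1$. This is a minor slip in a standard implication that the paper takes for granted, so the overall structure of your proof stands.
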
	

\section{$P$-ideal dichotomy}

In this section we will prove that \textsf{PFA}($T^*$) implies the $P$-ideal dichotomy. 
This will allow us to draw strong consequences from \textsf{PFA}($T^*$), such as the 
failure of square principles.

We recall the relevant definitions.

\begin{definition}
	Let $S$ be an uncountable set and $\mathcal I$ an ideal who members are countable subsets of $S$ 
	and which contains all finite subsets of $S$. 
	We say that $\mathcal I$ is a \emph{$P$-ideal} over $S$ 
	if whenever $\{ A_n : n < \omega \} \subseteq \mathcal I$, then there is $B \in \mathcal I$ 
	such that for all $n < \omega$, $A_n \subseteq^* B$.
\end{definition}

\begin{definition}
	The \emph{$P$-ideal dichotomy} is the statement that whenever $\mathcal I$ is a 
	$P$-ideal over an uncountable set $S$, 
	then either there exists 
	an uncountable set $T \subseteq S$ such that $[T]^\omega \subseteq \mathcal I$, or 
	$S = \bigcup \{ S_n : n < \omega \}$, where for each $n < \omega$, 
	$[S_n]^\omega \cap \mathcal I = \emptyset$.
\end{definition}

Fix a $P$-ideal $\mathcal I$ over an uncountable set $S$, and we will prove that 
under \textsf{PFA}($T^*$), one of the two alternatives of the $P$-ideal dichotomy holds for $\mathcal I$. 
For a set $X \subseteq S$, we will write $X \perp \mathcal I$ to mean that 
$[X]^\omega \cap \mathcal I = \emptyset$. 
Note that for any finite set $x \subseteq S$, $x \perp \mathcal I$. 
If $S$ is equal 
to a union $\bigcup \{ S_n : n < \omega \}$, where for each $n$, $S_n \perp \mathcal I$, 
then we are done. 
So assume not. 

We will prove that there exists a $T^*$-proper forcing poset which adds an uncountable 
set $T \subseteq S$ such that $[T]^\omega \subseteq \mathcal I$. 
There are several proper forcing posets in the literature for adding such a set. 
We will prove that the version from 
\cite{pid2} is $T^*$-proper.

Recall that a set $K \subseteq \mathcal I$ is \emph{cofinal} 
if for all $x \in \mathcal I$, there is $y \in K$ 
such that $x \subseteq^* y$. 
For a cofinal set $K \subseteq \mathcal I$ and $a \in \mathcal I$, define 
$K \restrict a := \{ b \in K : a \subseteq b \}$.

\begin{definition}
Define $\p$ as the forcing poset consisting of pairs $(a,X)$, where $a \in \mathcal I$ and 
$X$ is a countable collection of cofinal subsets of $\mathcal I$, ordered by 
$(b,Y) \le (a,X)$ if $a \subseteq b$, $X \subseteq Y$, and for all $K \in X$, the set 
$K \restrict (b \setminus a)$ is in $Y$.
\end{definition}

For any condition $p \in \p$, we will write $p = (a_p,X_p)$.

Observe that if $p \in \p$ and $K \subseteq \mathcal I$ is cofinal, then $(a_p,X_p \cup \{ K \})$ 
is a condition below $p$.

The next two lemmas follow by straightforward 
arguments. See \cite{pid2} for the details.

\begin{lemma}
Let $p \in \p$. 
For any countable set $S_0 \subseteq S$, there is $q \le p$ such that 
$a_q \setminus S_0 \ne \emptyset$.
	\end{lemma}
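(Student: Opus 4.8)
The plan is to reduce the lemma to producing a single point $s \in S \setminus S_0$ by which $p$ can be extended. First I would dispose of the trivial case: if $a_p \not\subseteq S_0$ then $q := p$ already works, so assume $a_p \subseteq S_0$. Write $X_p = \{ K_n : n < \omega \}$. Since $\mathcal I$ contains all finite subsets of $S$ and is closed under finite unions, $a_p \cup \{s\} \in \mathcal I$ for \emph{every} $s \in S$; hence the only constraint imposed by the ordering on $\p$ when we add $s$ to the first coordinate is that each $K_n \restrict \{s\}$ be a cofinal subset of $\mathcal I$. So, granting that we can find $s \in S \setminus S_0$ with $K_n \restrict \{s\}$ cofinal for all $n$, I would set
$$ q := \bigl(a_p \cup \{s\},\; X_p \cup \{ K_n \restrict \{s\} : n < \omega \}\bigr), $$
and check, using $s \notin a_p$ (which holds since $a_p \subseteq S_0$ and $s \notin S_0$), that $a_q \setminus a_p = \{s\}$, whence $q \in \p$, $q \le p$, and $a_q \setminus S_0 = \{s\} \ne \emptyset$.

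Everything therefore reduces to the following claim, where for a cofinal $K \subseteq \mathcal I$ I write $N_K := \{ s \in S : K \restrict \{s\} \text{ is not cofinal in } \mathcal I \}$: for every cofinal $K$, we have $N_K \perp \mathcal I$. Granting this, the set $S_0 \cup \bigcup_{n < \omega} N_{K_n}$ is a countable union of subsets of $S$ orthogonal to $\mathcal I$ — indeed the countably many singletons comprising $S_0$ are each orthogonal to $\mathcal I$, being finite. Since we are in the case where $S$ is \emph{not} a countable union of sets orthogonal to $\mathcal I$, this union is a proper subset of $S$, and any $s$ in its complement is the point we need.

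The main obstacle is the claim that $N_K \perp \mathcal I$, and here is how I would argue it. Suppose toward a contradiction that $w \in [N_K]^\omega \cap \mathcal I$. For each $s \in w$, since $K \restrict \{s\}$ is not cofinal, fix a witness $x_s \in \mathcal I$: no $y \in K$ with $s \in y$ satisfies $x_s \subseteq^* y$ (if $K \restrict \{s\}$ happens to be empty, take $x_s := \emptyset$, and such an $s$ simply cannot be the one selected below). Apply the $P$-ideal property of $\mathcal I$ to the countable family $\{ w \} \cup \{ x_s : s \in w \}$ to obtain $B \in \mathcal I$ with $w \subseteq^* B$ and $x_s \subseteq^* B$ for all $s \in w$, and then, using that $K$ is cofinal, fix $y^* \in K$ with $B \subseteq^* y^*$. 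Then $w \subseteq^* y^*$ and $x_s \subseteq^* y^*$ for all $s \in w$; since $w \setminus y^*$ is finite and $w$ is infinite, pick $s \in w \cap y^*$. Now $s \in y^* \in K$ and $x_s \subseteq^* y^*$, contradicting the choice of $x_s$. This proves the claim, and with it the lemma.
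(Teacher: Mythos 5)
Your proof is correct. The paper itself does not prove this lemma (it remarks that it follows by a ``straightforward argument'' and defers to the cited reference \cite{pid2}), but the route you take — reducing to finding a single $s\in S\setminus S_0$ with each $K_n\restrict\{s\}$ cofinal, and establishing that the set $N_K$ of ``bad'' $s$ for a cofinal $K$ is orthogonal to $\mathcal I$ via the $P$-ideal property, so that the standing assumption on $S$ yields a good $s$ — is precisely the standard argument for this fact; all the details (including the transitivity of $\subseteq^*$ through $B$ and the handling of $s$ with $K\restrict\{s\}=\emptyset$) check out.
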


\begin{lemma}
	Suppose that $p \in \p$, $z \in S$, and $p$ forces that $z \in a_s$ for some $s \in \dot G_\p$. 
	Then $z \in a_p$. 
	\end{lemma}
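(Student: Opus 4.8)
The plan is to argue by contradiction. Write $\dot A$ for the canonical $\p$-name $\bigcup \{ a_s : s \in \dot G_\p \}$, so that the hypothesis says exactly that $p$ forces $z \in \dot A$. Suppose toward a contradiction that $z \notin a_p$. I will exhibit a condition $q \le p$ which forces $z \notin \dot A$, contradicting the choice of $p$.

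The key point is the cofinal set
$$
K_z := \{ b \in \mathcal I : z \notin b \}.
$$
This is cofinal in $\mathcal I$: given $x \in \mathcal I$, the set $x \setminus \{ z \}$ lies in $\mathcal I$, avoids $z$, and satisfies $x \subseteq^* x \setminus \{ z \}$. Let $q := (a_p, X_p \cup \{ K_z \})$. By the observation recorded after Definition 6.3, $q$ is a condition with $q \le p$; note that $a_q = a_p$, so $z \notin a_q$ while $K_z \in X_q$.

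Next I would show that no extension of $q$ has $z$ in its first coordinate. Suppose $q' \le q$ and $z \in a_{q'}$. Then $z \in a_{q'} \setminus a_q$, so by the definition of the ordering on $\p$ the set $K_z \restrict (a_{q'} \setminus a_q)$ must belong to $X_{q'}$, hence must be a cofinal subset of $\mathcal I$. But every member of that set contains $a_{q'} \setminus a_q$ and therefore contains $z$, whereas every member of $K_z$ omits $z$; thus $K_z \restrict (a_{q'} \setminus a_q) = \emptyset$, which is not cofinal since $\mathcal I \ne \emptyset$ — a contradiction.

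Finally I would conclude that $q$ forces $z \notin \dot A$. If $G$ were a generic filter with $q \in G$ and $z \in a_s$ for some $s \in G$, then choosing $q' \in G$ below both $q$ and $s$ (possible since $G$ is a filter) would give $z \in a_s \subseteq a_{q'}$, contradicting the previous paragraph. So $q$ forces $z \notin \dot A$ while $q \le p$ and $p$ forces $z \in \dot A$, which is absurd; hence $z \in a_p$. I expect no substantive obstacle: the whole argument is routine bookkeeping with the ordering of $\p$, and the only step requiring any insight is hitting on the cofinal set $K_z$ and noticing that forcing $z$ into the first coordinate of a condition would force $K_z$ to restrict to the empty set below it.
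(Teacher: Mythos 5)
Your proof is correct. The paper itself does not spell out an argument for this lemma (it defers to the reference \cite{pid2} for details), but the argument you give is the natural one and every step checks out: the set $K_z = \{b \in \mathcal I : z \notin b\}$ is cofinal because $\mathcal I$ is an ideal (so $x \setminus \{z\} \in \mathcal I$ for every $x \in \mathcal I$ and $x \subseteq^* x \setminus \{z\}$); adjoining $K_z$ to $X_p$ yields a condition $q \le p$ by the observation after Definition 6.3; and any $q' \le q$ with $z \in a_{q'}$ would have to place $K_z \restrict (a_{q'} \setminus a_q)$ — which is empty since $z \in a_{q'} \setminus a_q$ while every member of $K_z$ omits $z$ — into $X_{q'}$, violating the requirement that all members of $X_{q'}$ be cofinal (hence non-empty, since $\mathcal I \ne \emptyset$). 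The genericity bookkeeping in your last paragraph correctly converts this into $q$ forcing $z \notin \dot A$, contradicting $q \le p$. The only thing worth flagging is stylistic: the contradiction framing is slightly heavier than needed, since you have directly produced a $q \le p$ forcing $z \notin \dot A$ whenever $z \notin a_p$, and could phrase the whole thing contrapositively; but that is a matter of presentation, not correctness.
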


We will prove below that the forcing poset 
$\p$ is $T^*$-proper and countably distributive. 
Assume for a moment that this is true. 
Observe that since $\p$ is countably distributive, $\p$ is still a $P$-ideal over $S$ in $V^{\p}$. 
Let $\dot T$ be a $\p$-name for the set 
$$
\bigcup \{ a_p : p \in \dot G_\p \}.
$$
Then Lemma 6.4 implies that $\p$ 
forces that $\dot T$ is uncountable, and 
Lemma 6.5 implies that $\p$ forces that 
$[\dot T]^\omega \subseteq \mathcal I$. 

The next two lemmas, which follow from arguments appearing in \cite{pid2}, 
will be used to show that $\p$ is proper.

\begin{lemma}
	Let $K \subseteq \mathcal I$ be cofinal and $x \in \mathcal I$. 
	Then there exists a finite set $x_0 \subseteq x$ such that 
	$K \restrict (x \setminus x_0)$ is cofinal.	
\end{lemma}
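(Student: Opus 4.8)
The plan is to argue by contradiction, extracting from the $P$-ideal property a single member of $\mathcal I$ that dominates enough witnesses simultaneously. First I would dispose of the degenerate case: if $x$ is finite, then $x_0 := x$ works, since $K \restrict (x \setminus x) = K \restrict \emptyset = K$, which is cofinal by hypothesis. So assume $x$ is countably infinite, fix an enumeration $x = \{ \xi_n : n < \omega \}$, and for each $n < \omega$ put $x_0^n := \{ \xi_0, \dots, \xi_{n-1} \}$. If $x_0 \subseteq x_0'$ are finite subsets of $x$, then $x \setminus x_0 \supseteq x \setminus x_0'$, hence $K \restrict (x \setminus x_0) \subseteq K \restrict (x \setminus x_0')$; in particular it suffices to find some $n$ for which $K \restrict (x \setminus x_0^n)$ is cofinal, and then take $x_0 := x_0^n$.

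Suppose toward a contradiction that $K \restrict (x \setminus x_0^n)$ fails to be cofinal for every $n < \omega$. For each $n$ fix a witness $z_n \in \mathcal I$ such that no $b \in K$ with $x \setminus x_0^n \subseteq b$ satisfies $z_n \subseteq^* b$. Now apply the defining property of the $P$-ideal $\mathcal I$ to the countable family $\{ x \} \cup \{ z_n : n < \omega \} \subseteq \mathcal I$ to obtain $w \in \mathcal I$ with $x \subseteq^* w$ and $z_n \subseteq^* w$ for all $n$. Since $K$ is cofinal, fix $b \in K$ with $w \subseteq^* b$.

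Finally I would derive the contradiction by bookkeeping with almost-inclusion. Since $x \subseteq^* w \subseteq^* b$, the set $x \setminus b$ is finite, so for $n$ large enough $x \setminus b \subseteq x_0^n$, and hence $x \setminus x_0^n \subseteq b$. On the other hand $z_n \subseteq^* w \subseteq^* b$, so $z_n \subseteq^* b$. Thus this $b \in K$ satisfies both $x \setminus x_0^n \subseteq b$ and $z_n \subseteq^* b$, contradicting the choice of $z_n$. Hence some $K \restrict (x \setminus x_0^n)$ is cofinal, which is the required finite modification of $x$. There is no genuine obstacle here: the only point of substance is to feed $x$ together with all the $z_n$ into the $P$-ideal property at once, after which everything reduces to transitivity of $\subseteq^*$ and the observation that an almost-subset of $x$ differs from $x$ by a finite set, which is eventually captured by the initial segments $x_0^n$.
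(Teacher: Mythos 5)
Your proof is correct. The paper itself gives no proof of this lemma, deferring to the reference \cite{pid2}, so there is no in-paper argument to compare against; but your argument is a clean and self-contained derivation from the $P$-ideal property. The key move --- feeding $x$ together with all the putative counterexample witnesses $z_n$ into the $P$-ideal axiom simultaneously to get a single dominating $w$, then picking $b \in K$ with $w \subseteq^* b$ --- is exactly the right idea, and the finish is just transitivity of $\subseteq^*$ plus the observation that $x \setminus b$, being finite, is absorbed by some initial segment $x_0^n$ of the fixed enumeration. The monotonicity remark (shrinking $x \setminus x_0$ only enlarges $K \restrict (x \setminus x_0)$) correctly reduces the problem to the cofinal family of initial segments, and the finite case is handled properly. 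No gaps.
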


\begin{lemma}
	Let $\theta$ be a regular cardinal such that 
	$S$, $\mathcal I$, and $\p$ are members of $H(\theta)$. 
	Let $N$ be a countable elementary substructure of $(H(\theta),\in,S,\mathcal I,\p)$. 
	Let $p \in N \cap \p$, $D \in N$ a dense subset of $\p$, 
	and $d \in \mathcal I$ such that for all $a \in N \cap \mathcal I$, 
	$a \subseteq^* d$. 
	Then there exists $q \le p$ in $N \cap D$ such that 
	$a_q \setminus a_p \subseteq d$.
	\end{lemma}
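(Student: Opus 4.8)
The plan is to follow the proof in \cite{pid2} that this poset is proper; its engine is a family of ``blocking'' cofinal sets. For $z \in S$, set $K_z := \{ b \in \mathcal I : z \notin b \}$. This is cofinal in $\mathcal I$, since for $x \in \mathcal I$ we have $x \setminus \{z\} \in K_z$ and $x \subseteq^* x \setminus \{z\}$; and $K_z \in N$ whenever $z \in N$, as $K_z$ is definable from $z$. The key property is that if $K_z$ is a member of the second coordinate of a condition $r$, then $z \notin a_s \setminus a_r$ for every $s \le r$: indeed $K_z \restrict (a_s \setminus a_r)$ must be nonempty, so some $b \in K_z$ contains $a_s \setminus a_r$, and $z \notin b$. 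We also use a ``shrinking'' operation: given conditions $t$ and $r$ with $r \le t$, both in $N$, and a finite set $e \subseteq a_r \setminus a_t$, define
\[
\mathrm{sh}_t(r,e) := \bigl( a_r \setminus e, \ X_r \cup \{ K \restrict ((a_r \setminus a_t) \setminus e) : K \in X_t \} \cup \{ K_z : z \in e \} \bigr).
\]
Since a restriction of a cofinal subset of $\mathcal I$ to a smaller set is still cofinal, one checks routinely that $\mathrm{sh}_t(r,e)$ is a condition with $\mathrm{sh}_t(r,e) \le t$, that $\mathrm{sh}_t(r,e) \in N$ when $e \in N$, that $a_{\mathrm{sh}_t(r,e)} \setminus a_t = (a_r \setminus a_t) \setminus e$, and that its second coordinate blocks every $z \in e$ together with everything blocked by $X_r$.

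Now I would build a descending sequence in $N$ that corrects the first coordinate into $d$. By elementarity pick $q_0 \le p$ in $N \cap D$. Since $a_{q_0} \in N \cap \mathcal I$ we have $a_{q_0} \subseteq N$ and $a_{q_0} \subseteq^* d$, so $e_0 := (a_{q_0} \setminus a_p) \setminus d$ is a finite subset of $N$, hence $e_0 \in N$; if $e_0 = \emptyset$ we are done with $q := q_0$, so set $r_1 := \mathrm{sh}_p(q_0, e_0)$, giving $r_1 \le p$, $r_1 \in N$, $a_{r_1} \setminus a_p \subseteq d$, and $X_{r_1}$ blocking $e_0$. Recursively, suppose $r_n \in N$ has been built with $r_n \le r_{n-1}$, $a_{r_n} \setminus a_p \subseteq d$, and $X_{r_n}$ blocking $e_0 \cup \dots \cup e_{n-1}$. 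If some $q \le r_n$ in $N \cap D$ has $a_q \setminus a_p \subseteq d$, stop with $q$; otherwise choose any $q_n \le r_n$ in $N \cap D$ and put $e_n := (a_{q_n} \setminus a_p) \setminus d$, a nonempty finite subset of $N$. Because $X_{r_n}$ blocks $e_0 \cup \dots \cup e_{n-1}$ and $q_n \le r_n$, the set $e_n$ is disjoint from $e_0 \cup \dots \cup e_{n-1}$; and since $a_{r_n} \setminus a_p \subseteq d$ we in fact have $e_n = (a_{q_n} \setminus a_{r_n}) \setminus d$. Set $r_{n+1} := \mathrm{sh}_{r_n}(q_n, e_n)$, which satisfies $r_{n+1} \le r_n$, $a_{r_{n+1}} \setminus a_p \subseteq d$, and whose second coordinate blocks $e_0 \cup \dots \cup e_n$.

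The point is that this recursion must terminate. If it did not, we would obtain a descending sequence $\langle r_n : n < \omega \rangle$ of conditions in $N$ and pairwise disjoint nonempty finite sets $e_n \subseteq (S \cap N) \setminus (a_p \cup d)$, and at every stage $n$ no member of $N \cap D$ below $r_n$ would have its new points inside $d$. Since $a_{r_n} \setminus a_p \subseteq d$ for all $n$, the set $a_\infty := \bigcup_n a_{r_n}$ is contained in $a_p \cup d$, hence lies in $\mathcal I$; using Lemma 6.8 to absorb, at each level, the finitely many extra points needed to keep the relevant restrictions of cofinal sets cofinal, one builds a lower bound $r_\infty$ of $\langle r_n \rangle$ whose first coordinate is still a subset of $a_p \cup d$. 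One then extends $r_\infty$ into $D$ and reduces the result back to $N$ — exactly the reflection used in \cite{pid2} to establish predensity of $N \cap D$ below a master condition — to produce a condition in $N \cap D$ below some $r_n$ adding no point outside $a_p \cup d$, contradicting the failure to stop at stage $n$. Carrying out this final reduction is the step I expect to be the main obstacle, and it is precisely where the hypothesis on $d$ and the $P$-ideal property are used in full; everything else is the bookkeeping of the $\mathrm{sh}_t$ operation and the Lemma 6.8 adjustments, exactly as in \cite{pid2}.
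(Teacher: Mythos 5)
Your proposal builds an elaborate machinery (blocking cofinal sets $K_z$, a shrinking operation $\mathrm{sh}_t$, and a recursion that corrects ``bad'' points one at a time), but you yourself flag that the termination step is unresolved. Indeed the recursion as written need not terminate, and the final ``reduction back to $N$'' you appeal to is essentially the content of the lemma itself, so the argument is circular. This is a genuine gap: the $\mathrm{sh}_t$ calculus and the blocking sets are all set up correctly, but they do not lead to a closed proof.

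A direct argument in the style of the paper's proof of Lemma 6.8 requires no recursion. Working WLOG with $d \subseteq N$ (replace $d$ by $d \cap N$; since every $a \in N \cap \mathcal I$ is a countable element of $N$ and hence a subset of $N$, we still have $a \subseteq^* d \cap N$), put
$$J := \{ a \in \mathcal I : \text{there is no } q \le p \text{ in } D \text{ with } a_q \setminus a_p \subseteq a \},$$
so $J \in N$ by elementarity. Then $J$ is not cofinal: if it were, $(a_p, X_p \cup \{J\})$ would be a condition below $p$, density of $D$ would give some $q \le (a_p, X_p \cup \{J\})$ in $D$, and $J \restrict (a_q \setminus a_p) \in X_q$ would be cofinal and hence nonempty, producing $a \in J$ with $a_q \setminus a_p \subseteq a$ --- contradicting $a \in J$ since $q \le p$ lies in $D$. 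Since $J \in N$ is not cofinal, elementarity yields $b \in N \cap \mathcal I$ with $c \notin J$ for every $c \in \mathcal I$ with $b \subseteq^* c$. The observation your proposal lacks is this: $b \setminus d$ is finite (because $b \in N \cap \mathcal I$ gives $b \subseteq^* d$) and is a subset of $N$ (because $b \in N$ is countable, so $b \subseteq N$); being a finite subset of $N$, it is a member of $N$, and therefore so is $c := b \setminus (b \setminus d) = b \cap d$. Now $c \in N \cap \mathcal I$, $b \subseteq^* c$, and $c \subseteq d$. From $c \notin J$ there is $q \le p$ in $D$ with $a_q \setminus a_p \subseteq c$; since $p$, $D$, $c$ all lie in $N$, elementarity produces such a $q$ in $N$, and then $a_q \setminus a_p \subseteq c \subseteq d$, as required. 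The two facts that close the argument --- non-cofinality of $J$ from density of $D$, and $b \cap d \in N$ because $b \setminus d$ is a \emph{finite} subset of $N$ --- are precisely what is missing from your proposal, and once they are in hand the blocking sets, the shrinking operation, and the recursion are unnecessary.
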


The next two lemmas will help us to prove that $\p$ is $T^*$-proper.

\begin{lemma}
	Let $\theta$ be a regular cardinal such that 
	$S$, $\mathcal I$, and $\p$ are members of $H(\theta)$, 
	and $N$ a countable elementary substructure of $(H(\theta),\in,S,\mathcal I,\p)$. 
	Suppose that $d \in \mathcal I$ satisfies that $d \subseteq N$ 
	and for all $a \in N \cap \mathcal I$, $a \subseteq^* d$. 
	Let $p \in N \cap \p$, $\dot A \in N$ a $\p$-name for a subset of $T^*$, 
	and $y \in N \cap T^*$. 
	Then either:
	\begin{enumerate}
		\item there exists $q \le p$ in $N \cap \p$ such that $a_q \setminus a_p \subseteq d$ 
		and $q$ forces that $y \in \dot A$, or 
		\item there exists a cofinal set $J \subseteq \mathcal I$ in $N$ such that 
		$(a_p,X_p \cup \{ J \})$ forces that $y \notin \dot A$.
		\end{enumerate}
	\end{lemma}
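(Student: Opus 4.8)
The plan is to assume that alternative (1) fails and then to manufacture the cofinal set $J$ required by alternative (2). Put $J$ equal to the set of all $b \in \mathcal{I}$ such that $a_p \subseteq b$ and no condition $q \le (a_p,X_p)$ of $\p$ with $a_q \subseteq b$ forces $y \in \dot A$. Since $J$ is definable from $a_p$, $X_p$, $\dot A$, $y$ and the (definable in $H(\theta)$) forcing relation of $\p$, all of which belong to $N$, elementarity yields $J \in N$; moreover, for $b \in N$ the membership $b \in J$ can be decided using only conditions in $N \cap \p$. Two things then remain: that $J$ is cofinal in $\mathcal{I}$ — so that $q^* := (a_p, X_p \cup \{J\})$ is a genuine condition of $\p$ — and that $q^*$ forces $y \notin \dot A$.

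Granting cofinality of $J$, the second point is quick. Suppose some $q \le q^*$ forced $y \in \dot A$. From the definition of the ordering on $\p$ we have $J \restrict (a_q \setminus a_p) \in X_q$, so this set is a cofinal subset of $\mathcal{I}$ and in particular nonempty; choosing $b \in J$ with $a_q \setminus a_p \subseteq b$ and using $a_p \subseteq b$ gives $a_q \subseteq b$. But $q \le (a_p, X_p)$ (since $q^* \le (a_p, X_p)$), $a_q \subseteq b$, and $q$ forces $y \in \dot A$, contradicting $b \in J$. Hence $q^*$ forces $y \notin \dot A$.

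The substantial step, and the one I expect to be the real obstacle, is the cofinality of $J$. The plan is to argue by contradiction, playing the defining property of $d$ against the failure of (1). A routine reduction (replace a would-be counterexample $x$ by $x \cup a_p$, and observe that the only relevant shrinkings of $x$ that can still witness $x \subseteq^* b$ with $a_p\subseteq b$ are the sets $x \setminus F$ for $F$ a finite subset of $x \setminus a_p$) shows that if $J$ is not cofinal, then there is $x \in \mathcal{I}$ with $a_p \subseteq x$ such that for every finite $F \subseteq x \setminus a_p$ the set $x \setminus F$ contains $a_q$ for some $q \le (a_p,X_p)$ forcing $y \in \dot A$. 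The collection of all such $x$ is definable from parameters in $N$, hence lies in $N$, and being nonempty it has a member $x$ in $N$. Since $x \in N \cap \mathcal{I}$, the defining property of $d$ gives $x \subseteq^* d$, so $E := x \setminus (a_p \cup d)$ is finite. Now build points $u_0, u_1, \dots$ of $E$ recursively: given $u_0, \dots, u_{k-1}$, apply the property of $x$, reflected into $N$, with $F = \{u_0, \dots, u_{k-1}\}$ to obtain $q \in N \cap \p$ with $q \le (a_p, X_p)$, $a_q \subseteq x \setminus F$, and $q$ forcing $y \in \dot A$; because (1) fails and $q \in N \cap \p$, we cannot have $a_q \setminus a_p \subseteq d$, so we may pick $u_k \in a_q \setminus (a_p \cup d)$, and then $u_k \in x \setminus F$ forces $u_k \notin \{u_0,\dots,u_{k-1}\}$ while $u_k \in E$. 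This produces infinitely many distinct elements of the finite set $E$, a contradiction, so $J$ is cofinal.

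The heart of the matter is the reflection used in the last step: only because the counterexamples to the cofinality of $J$ form a first-order definable class over $N$ can the counterexample $x$ be taken inside $N$, and only then does the defining property of $d$ pin $x$ down to within a finite error of $d$ — which is exactly what makes the recursion terminate. Everything else is bookkeeping with the ordering of $\p$ and the $P$-ideal $\mathcal{I}$.
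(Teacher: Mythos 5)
Your proof is correct, and the step from cofinality of $J$ to alternative (2) matches the paper's.  But for the crux — cofinality of $J$ — you take a genuinely different and considerably longer route, and the reason traces back to a small change in the definition of $J$.  The paper sets $J$ to be the set of all $a \in \mathcal I$ (with no requirement that $a_p \subseteq a$) such that whenever $q \le p$ and $a_q \setminus a_p \subseteq a$, $q$ does not force $y \in \dot A$.  With that choice one shows directly that $d \in J$: if some $t \le p$ with $a_t \setminus a_p \subseteq d$ forced $y \in \dot A$, then since $d \subseteq N$ the finite set $a_t \setminus a_p$ lies in $N$, and elementarity produces such a $q$ in $N \cap \p$, contradicting the failure of (1).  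Once $d \in J$, cofinality is a two-line elementarity argument: every $b \in N \cap \mathcal I$ has $b \subseteq^* d \in J$, so $N$ believes $J$ is cofinal, hence $H(\theta)$ does.  Your extra constraint $a_p \subseteq b$ rules out $d$ as a witness (since $d$ need not contain $a_p$), and so you are forced into the indirect argument that manufactures infinitely many elements of the finite set $x \setminus (a_p \cup d)$.  That argument does go through — the reflection of the counterexample $x$ into $N$, the use of $x \subseteq^* d$, and the recursive choice of $u_k \in a_q \setminus (a_p \cup d)$ are all correct, and the key observation that $x \subseteq N$ (because $x$ is a countable member of $N$) makes $F \in N$ legitimate at every stage — but it is machinery you could have avoided.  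Dropping the clause $a_p \subseteq b$ from your $J$ and testing $a_q \setminus a_p \subseteq b$ instead of $a_q \subseteq b$ (these are equivalent once $a_p \subseteq b$, so nothing else in your proof changes) would collapse the cofinality step to the paper's short argument while leaving the forcing claim about $(a_p, X_p \cup \{J\})$ intact.
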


\begin{proof}
	Assume that (1) fails, and we will prove (2). 
	So there does not exist $q \le p$ in $N \cap \p$ such that 
	$a_q \setminus a_p \subseteq d$ and $q$ forces that $y \in \dot A$. 
	Let $J$ be the set of $a \in \mathcal I$ such that whenever $q \le p$ and $a_q \setminus a_p \subseteq a$, 
	then $q$ does not force that $y \in \dot A$. 
	Observe that $J \in N$ by elementarity.

	We claim that $J$ is cofinal in $\mathcal I$. 
	To prove the claim, it suffices to show that $d \in J$. 
	For if this is true, then for all $b \in N \cap \mathcal I$, there is $c \in J$ (namely $c = d$) such that 
	$b \subseteq^* c$. 
	Hence, by elementarity $J$ is cofinal in $\mathcal I$. 
	Suppose for a contradiction that $d \notin J$. 
	Then there exists $t \le p$ such that $a_t \setminus a_{p} \subseteq d$ 
	and $t$ forces that $y \in \dot A$. 
	Since $d \subseteq N$, $a_t \setminus a_{p}$ is a finite subset of $N$ and hence is a member of $N$. 
	By elementarity, there exists $q \in N \cap \p$ such that $q \le p$, 
	$a_q \setminus a_{p} = a_t \setminus a_{p}$, and $q$ forces that $y \in \dot A$. 
	Then $a_q \setminus a_{p} \subseteq d$. 
	So $q$ witnesses that (1) holds, which is a contradiction.
	
	It remains to show that $(a_p,X_p \cup \{ J \})$ forces that $y \not \in \dot A$. 
	If not, then there exists $q \le (a_p,X_p \cup \{ J \})$ 
	which forces that $y \in \dot A$. 
	By the definition of the order on $\p$, 
	$J \restrict (a_q \setminus a_p)$ is in $X_q$, and therefore is cofinal and hence non-empty. 
	Fix $a \in J$ such that $a_q \setminus a_p \subseteq a$. 
	Then $q \le p$, $a_q \setminus a_p \subseteq a$, and $q$ forces that $y \in \dot A$, 
	contradicting the fact that $a \in J$.
	\end{proof}

\begin{lemma}
	Let $\theta$ be a regular cardinal such that 
	$S$, $\mathcal I$, and $\p$ are members of $H(\theta)$, 
	and $N$ a countable elementary substructure of $(H(\theta),\in,S,\mathcal I,\p)$. 
	Let $\delta := N \cap \omega_1$. 
	Assume that $d \in \mathcal I$ satisfies that $d \subseteq N$ 
	and for all $a \in N \cap \mathcal I$, $a \subseteq^* d$. 
	Suppose that $x \in T^*_\delta$,  
	$\dot A \in N$ is a $\p$-name for a subset of $T^*$, 
	$R \in \{ <_{T^*}, <_{T^*}^- \}$, and $p \in N \cap \p$. 
	Then either:
	\begin{enumerate}
		\item there exists $y \ R \ x$ and $q \le p$ in $N \cap \p$ such that 
		$a_q \setminus a_p \subseteq d$ and $q$ forces that $y \in \dot A$, or
		\item there exists a cofinal set $J \subseteq \mathcal I$ such that 
		$(a_p,X_p \cup \{ J \})$ forces that $x \notin \dot A$.
	\end{enumerate}
	\end{lemma}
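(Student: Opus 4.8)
\emph{Proposal.} The plan is to derive Lemma 6.11 from Lemma 6.10 together with the $(N,T^*)$-genericity of $x$, which holds automatically by Lemma 1.5. The central object is the set
$$
B := \{\, z \in T^* : \text{there is no cofinal } J \subseteq \mathcal I \text{ with } (a_p, X_p \cup \{J\}) \text{ forcing } z \notin \dot A \,\}.
$$
The reason to phrase $B$ as the negation of alternative (2) of Lemma 6.10 is that it is then definable from $p$, $\dot A$, $\p$, and $\mathcal I$ alone --- all members of $N$ --- and in particular its definition does not mention $d$ or $N$; hence $B \in N$ by elementarity. By Lemma 1.5, $x$ is $(N,T^*)$-generic.

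First I would handle the case $x \notin B$: by the very definition of $B$ this is literally alternative (2) of the lemma, so there is nothing to prove. Otherwise $x \in B$, and since $B \in N$ and $x$ is $(N,T^*)$-generic, there is $y \ R \ x$ with $y \in B$. Because $\h_{T^*}(y) < \delta$ and each level $T^*_\alpha$ with $\alpha < \delta$ is a countable member, hence a subset, of $N$, we get $y \in N \cap T^*$. Now I would apply Lemma 6.10 to $y$, $\dot A$, $p$, and the same $d$: the statement $y \in B$ says exactly that alternative (2) of Lemma 6.10 fails for $y$, so its alternative (1) must hold, producing $q \le p$ in $N \cap \p$ with $a_q \setminus a_p \subseteq d$ and $q$ forcing $y \in \dot A$. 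Together with $y \ R \ x$ this is precisely alternative (1) of the present lemma.

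The one delicate point --- the main obstacle, such as it is --- is getting the definition of $B$ right. It must be definable over $N$, which forbids referring to $d$ or $N$, while at the same time, for nodes in $N$, membership in $B$ must be equivalent to the existence of the witness demanded by alternative (1). Taking $B$ to be the negation of alternative (2) of Lemma 6.10 accomplishes both simultaneously, since that dichotomy is inclusive and so $\neg(2)$ implies $(1)$. Beyond choosing this formulation, the argument simply reads off the two preceding lemmas, with no computation involved.
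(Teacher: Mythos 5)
Your proposal is correct and is essentially the paper's own argument, modulo a contrapositive reorganization. The paper assumes alternative (1) fails, applies the preceding dichotomy lemma to each $y \ R \ x$ to get that all such $y$ lie in the complement $\{z \in T^* : \exists$ cofinal $J$ with $(a_p,X_p\cup\{J\}) \Vdash z\notin\dot A\}$ (which is your $T^*\setminus B$), and then invokes Lemma 1.4 to conclude $x$ is in that set, yielding (2). You instead case directly on whether $x$ lies in your $B$ and use the $(N,T^*)$-genericity of $x$ (Definition 1.3 via Lemma 1.5) to push down to a $y \ R \ x$ in $B\cap N$, then read off (1) from the preceding lemma. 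These are dual phrasings of the same step — Lemma 1.4 is precisely the restatement of Definition 1.3 for the complementary set — so the substance is identical. One small point worth noting: your $B$ omits the phrase ``in $N$'' from the cofinal set $J$, so for $y\in N$ your $y\in B$ is formally stronger than the negation of clause (2) of the preceding lemma; by elementarity the two are equivalent for $y\in N$, so this causes no gap, and in fact the paper's own $B$ makes the same notational simplification.
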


\begin{proof}
	If (1) fails, then by Lemma 6.8 we have that 
	for all $y \ R \ x$, 
	there exists a cofinal set 
	$J \subseteq \mathcal I$ such that 
	$(a_p,X_p \cup \{ J \})$ 
	forces that $y \notin \dot A$. 
	Let $B$ be the set of $z \in T^*$ 
	for which there exists a cofinal set $J \subseteq \mathcal I$ 
	such that $(a_p,X_p \cup \{ J \})$ forces that $z \notin \dot A$. 
	Then $B \in N$ by elementarity, and for all $y \ R \ x$, $y \in B$. 
	By Lemma 1.4, it follows that $x \in B$. 
	So there exists a cofinal set 
	$J \subseteq \mathcal I$ such that 
	$(a_p,X_p \cup \{ J \})$ forces that $x \notin \dot A$.
	\end{proof}

\begin{proposition}
The forcing poset $\p$ is $T^*$-proper and countably distributive.
\end{proposition}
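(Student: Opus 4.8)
The plan is to extend the proof from \cite{pid2} that $\p$ is proper and countably distributive, grafting onto it the bookkeeping needed to handle the two tree relations via Lemma 6.9. Fix a regular cardinal $\theta$ with $T^*,S,\mathcal I,\p \in H(\theta)$, and let $N$ be a countable elementary substructure of $(H(\theta),\in,T^*,S,\mathcal I,\p)$, with $\delta := N \cap \omega_1$. Since $\mathcal I$ is a $P$-ideal over $S$ and $N \cap \mathcal I$ is countable, one can fix $d \in \mathcal I$ with $d \subseteq N$ and $a \subseteq^* d$ for every $a \in N \cap \mathcal I$ (amalgamate an enumeration of $N \cap \mathcal I$ mod finite and intersect back into $N$); note that $d \setminus F$ retains this property for any finite $F$, and $d \setminus F \subseteq N$ as well. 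By Lemma 1.5 every node of $T^*_\delta$ is $(N,T^*)$-generic, so, recalling the equivalent formulation of being $(N,\p,T^*)$-generic stated after Definition 1.6, it suffices to show that for each $p \in N \cap \p$ there is $q \le p$ which is $(N,\p)$-generic and such that for every $x \in T^*_\delta$, every $\p$-name $\dot A \in N$ for a subset of $T^*$, and every $R \in \{<_{T^*},<_{T^*}^-\}$: whenever $r \le q$ forces $x \in \dot A$, there are $y \ R \ x$ and $s \le r$ with $s$ forcing $y \in \dot A$.

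First I would recall the fusion from \cite{pid2}. One builds a descending sequence $\langle p_n : n < \omega \rangle$ in $N \cap \p$ with $p_0 = p$, with $a_{p_{n+1}} \setminus a_{p_n} \subseteq d$ for all $n$, and meeting every dense subset of $\p$ lying in $N$, using Lemma 6.7 at each step with $d$ replaced by $d$ minus a finite ``exceptional'' set that grows along the construction. Setting $a_q := \bigcup_n a_{p_n}$ (which lies in $\mathcal I$, being a subset of $a_p \cup d$) and $X_q := \bigcup_n X_{p_n}$ together with all restrictions $K \restrict (a_q \setminus a_{p_n})$ for $n < \omega$ and $K \in X_{p_n}$, the pair $q := (a_q,X_q)$ is a condition below every $p_n$ and is therefore $(N,\p)$-generic; the same fusion (applied to names for ordinals, or a simpler version) shows $\p$ is countably distributive. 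The point where Lemma 6.6 is used is exactly in keeping the restricted sets $K \restrict (a_q \setminus a_{p_n})$ cofinal: Lemma 6.6 assigns to each relevant cofinal $K$ a finite $F_K \subseteq d$ with $K \restrict (d \setminus F_K)$ cofinal, and the $a$-parts are grown so as to avoid $F_K$ from the stage at which $K$ enters the bookkeeping.

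The new ingredient is to devote, by standard bookkeeping, a stage of the construction to each triple $(x,\dot A,R)$ as above; there are only countably many, since $T^*_\delta$ is countable and $\dot A$ ranges over a countable set. At such a stage $n$ I would apply Lemma 6.9 to $p_n$. In case (1) we obtain $y \ R \ x$ and $q' \le p_n$ in $N \cap \p$ with $a_{q'} \setminus a_{p_n} \subseteq d$ and $q'$ forcing $y \in \dot A$; I set $p_{n+1} := q'$, so the eventual $q$ satisfies $q \le p_{n+1}$ and hence $q$ — and therefore every $r \le q$ — forces $y \in \dot A$. In case (2) we obtain a cofinal $J \subseteq \mathcal I$ with $(a_{p_n},X_{p_n}\cup\{J\})$ forcing $x \notin \dot A$; here $J$ need not lie in $N$, so it cannot be added to $p_{n+1}$, but it can be fed into the fusion's pending collection just like the sets $K$ above: using Lemma 6.6 one fixes a finite $F_J \subseteq d$ with $J \restrict (d \setminus F_J)$ cofinal, adds $F_J$ to the exceptional set from stage $n$ on, and includes both $J$ and $J \restrict (a_q \setminus a_{p_n})$ in $X_q$. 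Since the $a$-increments avoid $F_J$ from stage $n$ onward, $a_q \setminus a_{p_n} \subseteq d \setminus F_J$, so $J \restrict (a_q \setminus a_{p_n})$ is cofinal and $q \le (a_{p_n},X_{p_n}\cup\{J\})$; hence $q$ forces $x \notin \dot A$.

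It remains to verify $T^*$-genericity. Given $x,\dot A,R$ and $r \le q$ forcing $x \in \dot A$, consider the stage $n$ at which the triple $(x,\dot A,R)$ was processed. Case (2) is impossible, for then $q$, and hence $r$, would force $x \notin \dot A$; so case (1) held, and then $r \le q \le p_{n+1}$ forces $y \in \dot A$ for the witnessing $y \ R \ x$, so $s := r$ works. This shows $q$ is $(N,\p,T^*)$-generic, and since the countable elementary substructures $N$ of $(H(\theta),\in,T^*,S,\mathcal I,\p)$ form a club in $P_{\omega_1}(H(\theta))$ and each such $N$ works, $\p$ is $T^*$-proper. I expect the main obstacle to lie entirely in the part borrowed from \cite{pid2}, namely arranging a single descending sequence $\langle p_n \rangle$ whose $a$-parts meet every dense set of $N$ while the chosen lower bound absorbs, as cofinal members of $X_q$, all the required restrictions $K \restrict (a_q \setminus a_{p_n})$ for $K \in X_{p_n}$ as well as the countably many case-(2) witnesses $J$ and their restrictions; this rests on Lemma 6.6 together with the $P$-ideal amalgamation argument of \cite{pid2}, and goes through once the case-(2) witnesses are inserted into the bookkeeping at the stages they arise (the fact that these witnesses are not in $N$ being harmless, since the absorption mechanism is indifferent to that).
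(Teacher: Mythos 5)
Your proposal is correct and follows essentially the same strategy as the paper: fix a pseudo-union $d$ of $N \cap \mathcal I$, build a fusion sequence $\langle p_n \rangle$ in $N$ whose $a$-parts grow inside (shrinking finite modifications of) $d$, devote countably many stages to the triples $(x,\dot A,R)$ and invoke the dichotomy of Lemma 6.9 at each such stage, absorb the case-(2) cofinal witnesses $J$ and their restrictions into the side condition of the lower bound, and read off $(N,\p,T^*)$-genericity from the dichotomy: either the lower bound already forces $y \in \dot A$ for some $y \ R \ x$, or it forces $x \notin \dot A$ and the requirement is vacuous. The only cosmetic difference is that the paper first constructs a lower bound $q$ using only $N$-definable side conditions, and then passes to $r \le q$ by adding the (possibly external) sets $F(n)$ and their restrictions, whereas you merge these two steps; this makes no mathematical difference. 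One small point of care worth noting: at the stages where you apply Lemma 6.9 you should do so with the current $d_n$ (the pseudo-union minus the accumulated finite exceptional set), not the original $d$, so that the increments $a_{p_{n+1}} \setminus a_{p_n}$ keep avoiding the finitely many points removed to keep earlier restrictions cofinal; since $d_n$ still satisfies the hypotheses of Lemma 6.9, this is harmless, and your earlier remark about the growing exceptional set shows you have this in mind.
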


\begin{proof}
	Fix a regular cardinal $\theta$ such that $T^*$, $S$, $\mathcal I$, and $\p$ are members of $H(\theta)$. 
	Let $N$ be a countable elementary substructure of $H(\theta)$ which contains these parameters. 
	Let $\delta := N \cap \omega_1$. 
	Consider $p \in N \cap \p$. 
	We will find $q \le p$ which is $(N,\p,T^*)$-generic. 
	In fact, $q$ will have the property that it is a member of every dense open subset of 
	$\p$ which lies in $N$. 
	Standard arguments show that this implies that $\p$ is countably distributive.
	
	Since $\mathcal I$ is a $P$-ideal and $N$ is countable, 
	we can fix a set $d \in \mathcal I$ such that for all $a \in N \cap \mathcal I$, 
	$a \subseteq^* d$. 
	By intersecting $d$ with $N$ if necessary, we may also assume that $d \subseteq N$. 

	Let $\langle D_n : n < \omega \rangle$ enumerate all dense open subsets of $\p$ in $N$. 
	Let $\langle (x_n,\dot A_n) : n < \omega \rangle$ list all pairs $(x,\dot A)$, where 
	$x \in T^*_{\delta}$ and $\dot A \in N$ is a $\p$-name for a subset of $T^*$. 
	Let $\langle (i_n,K_n) : n < \omega \rangle$ enumerate all pairs $(i,K)$ such that 
	$i < \omega$ and $K$ is a cofinal subset of $\mathcal I$ in $N$.
		
	We will define by induction on $n < \omega$ objects $p_n$, $d_n$, and a function $F$ whose domain 
	is a subset of $\omega$. 
	For bookkeeping, fix a surjection $f : \omega \to 3 \times \omega$ such that for each $(i,m) \in 3 \times \omega$, 
	$f^{-1}(i,m)$ is infinite. 
	We will maintain the following properties:
	\begin{enumerate}
		\item $p_0 = p$ and $d_0 = d$;
		\item for all $n < \omega$, $p_n \in N \cap \p$ and $p_{n+1} \le p_n$;
		\item for all $n < \omega$, $d_n \in \mathcal I$, 
		for all $a \in N \cap \mathcal I$, $a \subseteq^* d_n$, 
		and $d_{n+1} \subseteq d_n$;
		\item for all $n < \omega$, $a_{p_{n+1}} \setminus a_{p_n} \subseteq d_{n+1}$.  
		\end{enumerate}
	
	Let $p_0 := p$ and $d_0 := d$. 
	Assume that $n < \omega$ and $p_n$, $d_n$, and $F \restrict n$ are defined. 
	Let $f(n) = (i,m)$. 
	We split the construction into the three cases depending on whether $i$ is $0$, $1$, or $2$.
	
	Suppose that $i = 0$. 
	First, we specify that $n \notin \dom(F)$. 
	Secondly, in order to define $d_{n+1}$, 
	we consider the pair $(i_m,K_m)$. 
	We only care if $i_m \le n$ and $K_m \in X_{p_{i_m}}$. 
	If not, then let $d_{n+1} := d_n$. 
	Otherwise, since $p_{n} \le p_{i_m}$, 
	$K_m \restrict (a_{p_n} \setminus a_{p_{i_m}}) \in X_{p_n}$, 
	and in particular is cofinal. 
	By Lemma 6.6, we can find $d_{n+1} \subseteq d_n$ which 
	is equal to $d_n$ minus finitely many elements such that 
	$$
	(K_m \restrict (a_{p_n} \setminus a_{p_{i_m}})) 
	\restrict d_{n+1}
	$$
	is cofinal in $\mathcal I$. 
	By the choice of $d_{n+1}$, clearly for all $a \in N \cap \mathcal I$, 
	$a \subseteq^* d_{n+1}$. 
	Thirdly, apply Lemma 6.7 to fix 
	$p_{n+1} \le p_n$ in $D_m$ such that 
	$a_{p_{n+1}} \setminus a_{p_n} \subseteq d_{n+1}$.

	Suppose that $i = 1$. 
	Consider the pair $(x_m,\dot A_m)$. 
	Then $x_m$ is $(N,T^*)$-generic by Lemma 1.5. 
	There are two possibilities. 
	If there exists $y <_{T^*} x_m$ and $v \le p_n$ in $N \cap \p$ 
	such that $a_{v} \setminus a_{p_{n}} \subseteq d_n$ and $v$ forces that 
	$y \in \dot A_m$, then let $F(n)$ be equal such a $y$ and let $p_{n+1}$ 
	be such a $v$. 
	Also, define $d_{n+1} := d_n$.
	
	If not, then by Lemma 6.9 there exists a cofinal set $J \subseteq \mathcal I$ such that 
	$(a_{p_n},X_{p_n} \cup \{ J \})$ forces that $x_m \notin \dot A_m$. 
	Define $F(n)$ to be such a $J$. 
	(For clarity, $J$ is not necessarily in $N$.) 
	Now apply Lemma 6.6 to find $d_{n+1} \subseteq d_n$ which is equal to $d_n$ minus finitely many elements 
	such that $F(n) \restrict d_{n+1}$ is cofinal. 
	Also, let $p_{n+1} := p_n$.
	
	Suppose that $i = 2$. 
	This case is exactly the same as the case $i = 1$, except replacing the relation $<_{T^*}$ 
	with $<_{T^*}^-$.
	
	This completes the construction. 
	Let $X := \bigcup \{ X_{p_n} : n < \omega \}$. 
	Define $q = (a_q,X_q)$ by $a_q := \bigcup \{ a_{p_n} : n < \omega \}$ and 
	$$
	X_q := X \cup \{ K \restrict (a_q \setminus a_{p_i}) : i < \omega, \ K \in X_{p_i} \}.
	$$ 
	We claim that $q$ is a condition below each $p_n$.
	
	Observe that by construction, 
	for all $n < \omega$, 
	$a_q \setminus a_{p_n} \subseteq d_{n+1}$. 
	In particular, $a_q = a_{p_0} \cup (a_{q} \setminus a_{p_0}) \subseteq a_{p_0} \cup d_1$, which implies 
	that $a_q$ is in $\mathcal I$.
	
	Clearly $X$ is a countable collection of cofinal 
	subsets of $\mathcal I$. 
	Consider $i < \omega$ and $K \in X_{p_i}$, and we will 
	show that $K \restrict (a_q \setminus a_{p_i})$ is cofinal. 
	Fix $m$ such that $(i_m,K_m) = (i,K)$. 
	Let $n \ge i$ be such that $f(n) = (0,m)$. 
	Then $i_m = i \le n$ and $K_m = K \in X_{p_{i_m}}$. 
	By construction, 
	$$
	(K_m \restrict (a_{p_n} \setminus a_{p_{i_m}})) 
	\restrict d_{n+1}
	$$
	is cofinal in $\mathcal I$. 
	But we claim that 
	$K \restrict (a_q \setminus a_{p_i})$ contains this 
	cofinal set, and hence is itself cofinal. 
	To see this, assume that $a \in K_m$ contains 
	$a_{p_n} \setminus a_{p_{i_m}}$ and $d_{n+1}$. 
	Then $a \in K_m = K$. 
	We need to show that $a_q \setminus a_{p_i} \subseteq a$. 
	Since $a_{p_i} \subseteq a_{p_n} \subseteq a_q$, 
	we have that 
	$$
	a_q \setminus a_{p_{i}} = 
	(a_q \setminus a_{p_n}) \cup 
	(a_{p_{n}} \setminus a_{p_{i}}).
	$$
	And both of the sets in this union are subsets of $a$, 
	by the choice of $a$ and since 
	$a_{q} \setminus a_{p_n} \subseteq d_{n+1}$.

	Thus, $q$ is a condition, and easily by definition, $q \le p_n$ for all $n < \omega$.	
	By construction, for any dense open subset $D$ of $\p$ in $N$, $q \in D$. 
	So $q$ is $(N,\p)$-generic.

	In order to find an $(N,\p,T^*)$-generic condition, we need to extend $q$ further. 
	But before we do this, let us prove a claim. 
	Consider $n \in \dom(F)$ such that $F(n) \subseteq \mathcal I$ is cofinal. 
	We claim that $F := F(n) \restrict (a_q \setminus a_{p_n})$ is cofinal. 
	Note that by construction, we are in the case that $f(n) = (i,m)$, where $i \in \{ 1, 2 \}$, 
	$(a_{p_n},X_{p_n} \cup \{ F(n) \})$ forces that $x_m \notin \dot A_m$, 
	and $F(n) \restrict d_{n+1}$ is cofinal. 
	But since $a_q \setminus a_{p_n} \subseteq d_{n+1}$, we have that 
	$$
	F(n) \restrict d_{n+1} \subseteq F(n) \restrict (a_q \setminus a_{p_n}) = F.
	$$
	Thus, $F$ contains a cofinal set, and hence is itself cofinal.
	
	Define 
	$$
	X_1 := \{ F(n) : n \in \dom(F), \ F(n) \subseteq \mathcal I \ 
	\textrm{is cofinal} \}
	$$
	and 
	$$
	X_2 := \{ F(n) \restrict (a_q \setminus a_{p_n}) : n \in \dom(F), \ 
	F(n) \subseteq \mathcal I \ \textrm{is cofinal} \}.
	$$
	Note that by the claim of the previous paragraph, every member of $X_2$ 
	is cofinal. 
	Define 
	$$
	r := (a_q,X_q \cup X_1 \cup X_2).
	$$
	Then clearly $r \in \p$ and $r \le q$. 
	In particular, $r$ is $(N,\p)$-generic.

	It remains to show that $r$ is $(N,\p,T^*)$-generic. 
	So let $x \in T^*_\delta$, $\dot A \in N$ a $\p$-name 
	for a subset of $T^*$, and $R \in \{ <_{T^*}, <_{T^*}^- \}$. 
	Let $i$ be $1$ if $R$ equals $<_{T^*}$ and $2$ if $R$ equals $<_{T^*}^-$. 
	Fix $m$ such that $(x_m,\dot A_m) = (x,\dot A)$. 
	Fix $n < \omega$ such that $f(n) = (i,m)$.
	
	Recall that at stage $n$ we considered two possibilities. 
	The first is that there exists $y \ R \ x_m$ and $v \le p_n$ in $N \cap \p$ 
	such that $v$ forces that $y \in \dot A_m$ 
	and $a_v \setminus a_{p_{n}} \subseteq d_n$. 
	In that case, $F(n) \ R \ x_m$ and $p_{n+1}$ forces that $F(n) \in \dot A_m$. 
	Thus, $F(n) \ R \ x$ and $r$ forces that $F(n) \in \dot A$, and we are done.

	The second possibility is that there exists a cofinal set, which we defined as $F(n)$, 
	such that $(a_{p_n},X_{p_n} \cup \{ F(n) \})$ forces that $x_m \notin \dot A_m$. 
	By the definition of $r$, $F(n) \in X_r$ and 
	$F(n) \restrict (a_r \setminus a_{p_n}) = 
	F(n) \restrict (a_q \setminus a_{p_n}) \in X_r$. 
	It follows that $r \le (a_{p_n},X_{p_n} \cup \{ F(n) \})$. 
	Thus, $r$ forces that $x \notin \dot A$, which concludes the proof.
\end{proof}

The next theorem now follows by a 
straightforward argument.

\begin{thm}
$\textsf{PFA}(T^*)$ implies the $P$-ideal dichotomy.
\end{thm}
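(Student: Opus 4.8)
The plan is to dispose of the two alternatives of the $P$-ideal dichotomy separately and then run the forcing $\p$ of Definition~6.3 through \textsf{PFA}($T^*$). Fix a $P$-ideal $\mathcal I$ over an uncountable set $S$. If $S = \bigcup_{n<\omega} S_n$ with $[S_n]^\omega \cap \mathcal I = \emptyset$ for every $n$, then the second alternative holds and there is nothing to do, so assume this fails; then $\p$ is defined. By Proposition~6.10, $\p$ is $T^*$-proper and countably distributive, so in particular it preserves $\omega_1$, and by Proposition~2.5 we may apply \textsf{PFA}($T^*$) to $\p$. Recall from the discussion following Lemma~6.5 that $\p$ forces that $\dot T := \bigcup\{a_p : p \in \dot G_\p\}$ is an uncountable subset of $S$ satisfying $[\dot T]^\omega \subseteq \mathcal I$. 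The goal is to hand $\p$ a well-chosen family of $\omega_1$ dense sets so that the resulting filter yields, back in $V$, an uncountable $T \subseteq S$ with $[T]^\omega \subseteq \mathcal I$, which is the first alternative.

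The point that needs a little thought is this: a filter $G$ produced by a forcing axiom need not be countably directed, so $\bigcup\{a_p : p \in G\}$ is not visibly as well-behaved as the generic $\dot T$, and in particular it is not clear that all of its countable subsets land in $\mathcal I$. I would therefore not read the desired set off the $a_p$'s directly. Instead, since $\p$ forces $\dot T$ to be uncountable with $[\dot T]^\omega \subseteq \mathcal I$ and preserves $\omega_1$, $\p$ forces that there is a strictly $\subseteq$-increasing sequence $\langle \dot b_\gamma : \gamma < \omega_1\rangle$ of members of $\mathcal I$ (in the extension, pick an uncountable subset of $\dot T$ of size $\omega_1$, enumerate it injectively, and let $\dot b_\gamma$ consist of its first $\gamma$ terms; each such set is a countable subset of $\dot T$ and hence lies in $\mathcal I$, and successive terms differ by injectivity of the enumeration). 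Fix a $\p$-name for such a sequence. For each $\gamma < \omega_1$ let $D_\gamma$ be the set of conditions $p$ for which there is $b \in \mathcal I$ with $p \Vdash \dot b_\gamma = \check b$; since $\dot b_\gamma$ is forced to be an element of the ground-model set $\mathcal I$, each $D_\gamma$ is dense.

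Now apply \textsf{PFA}($T^*$) to obtain a filter $G$ on $\p$ meeting every $D_\gamma$, and for each $\gamma$ fix $b_\gamma \in \mathcal I$ with some member of $G \cap D_\gamma$ forcing $\dot b_\gamma = \check b_\gamma$. For $\gamma < \gamma'$, passing to a common extension in $G$ of the two witnessing conditions and using that it is forced that $\langle \dot b_\gamma : \gamma<\omega_1\rangle$ is strictly increasing, one gets $b_\gamma \subsetneq b_{\gamma'}$. Set $T := \bigcup_{\gamma<\omega_1} b_\gamma$; then $T \subseteq S$, and being the union of a strictly increasing $\omega_1$-chain, $T$ is uncountable. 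Moreover, if $c \subseteq T$ is countable, each of its elements lies in some $b_\gamma$, so letting $\gamma^*$ be the supremum of these countably many indices we have $c \subseteq b_{\gamma^*} \in \mathcal I$ by monotonicity, and hence $c \in \mathcal I$ since $\mathcal I$ is downward closed. Thus $[T]^\omega \subseteq \mathcal I$, and $T$ witnesses the first alternative of the $P$-ideal dichotomy. The substantive content is entirely in Proposition~6.10 (that $\p$ is $T^*$-proper and countably distributive); the argument above is the routine bookkeeping on top of it, and avoiding the naive choice $\bigcup\{a_p : p \in G\}$ is the only subtlety — which is why the theorem "follows by a straightforward argument."
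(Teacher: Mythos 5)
Your proof is correct, and it is a reasonable instantiation of the ``straightforward argument'' the paper leaves to the reader (the paper gives no details at all for this theorem, deferring entirely to Proposition 6.10). Your central observation --- that a filter produced by a forcing axiom need not be countably directed, so one cannot simply read off $\bigcup\{a_p : p\in G\}$ and expect all of its countable subsets to land in $\mathcal I$ --- is the genuine subtlety here, and your workaround via a $\p$-name for a strictly increasing $\omega_1$-chain $\langle \dot b_\gamma : \gamma < \omega_1\rangle$ of members of $\mathcal I$ inside $\dot T$ is the standard device for transferring this kind of generic object through a forcing axiom. One small point worth making explicit: when you assert that each $\dot b_\gamma$ is forced to be a member of the ground-model set $\mathcal I$, you are implicitly using the countable distributivity of $\p$ twice --- once to know $\dot b_\gamma$ is a ground-model countable set at all, and once (in conjunction with Lemma 6.5) to know that every countable subset of $\dot T$ in the extension actually lies in $\mathcal I$. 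Since you invoke Proposition 6.10 up front, this is covered, but it is the load-bearing hypothesis and deserves to be flagged where it is used rather than only at the outset.
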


\begin{corollary}
	$\textsf{PFA}(T^*)$ implies that for all regular cardinals $\lambda > \omega_1$, 
	$\Box(\lambda)$ fails. 
	In particular, it implies that for all cardinals $\kappa \ge \omega_1$, 
	$\Box_{\kappa}$ fails.
\end{corollary}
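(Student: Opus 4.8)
The plan is to derive the corollary from the $P$-ideal dichotomy, which holds by Theorem 6.11, via the known theorem of Todorcevic that the $P$-ideal dichotomy implies the failure of $\Box(\lambda)$ for every regular cardinal $\lambda > \omega_1$. The second sentence of the corollary then follows from the first by a standard reduction: if $\kappa \ge \omega_1$ and $\langle C_\alpha : \alpha \in \lim(\kappa^+) \rangle$ is a $\Box_\kappa$-sequence, then it is automatically threadless, since a club $E$ threading it would have an accumulation point $\alpha$ with $\ot(E \cap \alpha) > \kappa$, forcing $\ot(C_\alpha) = \ot(E \cap \alpha) > \kappa$ and contradicting $\ot(C_\alpha) \le \kappa$; hence such a sequence is in fact a $\Box(\kappa^+)$-sequence, and since $\kappa^+$ is a regular cardinal greater than $\omega_1$, the first sentence yields that $\Box_\kappa$ fails.

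So it suffices to prove, from the $P$-ideal dichotomy, that $\Box(\lambda)$ fails for a fixed regular $\lambda > \omega_1$. Suppose toward a contradiction that $\vec C = \langle C_\alpha : \alpha \in \lim(\lambda) \rangle$ is a $\Box(\lambda)$-sequence; as usual we may assume each $C_\alpha$ is a club in $\alpha$ consisting of limit ordinals, that $\vec C$ is coherent, meaning $\beta \in \mathrm{acc}(C_\alpha)$ implies $C_\beta = C_\alpha \cap \beta$, and that $\vec C$ has no thread, i.e.\ there is no club $E \subseteq \lambda$ with $C_\alpha = E \cap \alpha$ for all $\alpha \in \mathrm{acc}(E)$. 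From $\vec C$ one defines an uncountable set $S$ of size $\lambda$ coded from $\vec C$ (for instance a stationary set of ordinals of countable cofinality together with the walk data of $\vec C$) and a $P$-ideal $\mathcal I$ over $S$ whose members are exactly the countable subsets of $S$ that are ``coherently threaded toward $\lambda$'' by $\vec C$; that $\mathcal I$ is a $P$-ideal reduces to the coherence of $\vec C$ together with a diagonalization across countably many levels, just as in the standard constructions of $P$-ideals from coherent sequences.

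Both alternatives of the dichotomy are then ruled out. If there is an uncountable $T \subseteq S$ with $[T]^\omega \subseteq \mathcal I$, then gluing together the local data of $T$ along the coherence relation yields a club $E \subseteq \lambda$ with $C_\alpha = E \cap \alpha$ for all $\alpha \in \mathrm{acc}(E)$, i.e.\ a thread through $\vec C$, contradicting the choice of $\vec C$. If instead $S = \bigcup_{n < \omega} S_n$ with $[S_n]^\omega \cap \mathcal I = \emptyset$ for every $n$, then a reflection argument --- passing to a suitable elementary submodel and using that $\lambda$ is regular and uncountable, so that $\vec C$ reflects on a club --- locates inside some $S_n$ a countable set which is nevertheless coherently threaded toward $\lambda$, hence a member of $\mathcal I$, contradicting orthogonality. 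Since neither horn of the dichotomy can hold, we have the desired contradiction.

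The technical heart lies entirely in choosing $\mathcal I$ so that both exclusions go through simultaneously; this is exactly Todorcevic's argument, and for the purposes of this paper one may simply quote the resulting theorem that the $P$-ideal dichotomy implies the failure of $\Box(\lambda)$ for every regular $\lambda > \omega_1$. Granting that, the two remaining ingredients --- the reduction of $\Box_\kappa$ to $\Box(\kappa^+)$ above, and the $P$-ideal dichotomy itself, which is Theorem 6.11 --- make the proof of the corollary short.
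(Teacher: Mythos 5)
Your approach matches the paper's exactly: both derive the corollary by invoking the known theorem that the $P$-ideal dichotomy implies the failure of $\Box(\lambda)$ for all regular $\lambda > \omega_1$ (the paper simply cites \cite{pid2} and \cite{viale}), together with the standard reduction showing that a $\Box_\kappa$-sequence is automatically a $\Box(\kappa^+)$-sequence. Your intermediate sketch of Todorcevic's construction is vague in its details, but it is not load-bearing since you explicitly defer to quoting the cited theorem, so the logical structure of your argument coincides with the paper's.
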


\begin{corollary}
	$\textsf{PFA}(T^*)$ implies the singular cardinal hypothesis.
	\end{corollary}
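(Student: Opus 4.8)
The plan is to obtain \textsf{SCH} as a consequence of the $P$-ideal dichotomy, which we already have in hand via Theorem 6.11. Thus the proof splits into two halves: first, $\textsf{PFA}(T^*)$ implies the $P$-ideal dichotomy, which is exactly the content of Theorem 6.11; and second, the $P$-ideal dichotomy implies \textsf{SCH}, which is a theorem of Viale. Since the second implication is an established result in the literature, the argument to be written here is genuinely short — this is the ``straightforward argument'' referred to in the statement. For the reader's convenience one would include a sketch of Viale's implication, along the lines below.

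First, reduce the problem. By Silver's theorem, the least failure of \textsf{SCH}, if one exists, occurs at a singular cardinal $\mu$ with $\cf\mu = \omega$, so it suffices to prove $\mu^{\aleph_0} = \mu^{+}$ for every singular cardinal $\mu$ of cofinality $\omega$ with $2^{\aleph_0} < \mu$ (the case $2^{\aleph_0} \ge \mu$ being trivial, as then $\mu^{\aleph_0} = 2^{\aleph_0} = \max(\mu^{+},2^{\aleph_0})$). Suppose toward a contradiction that $\mu$ is the least such counterexample, so $\mu^{\aleph_0} > \mu^{+}$. Fix an increasing sequence $\langle \mu_n : n < \omega \rangle$ of regular cardinals cofinal in $\mu$, and, by \textsf{PCF} theory, reformulate $\mu^{\aleph_0} > \mu^{+}$ in terms of the cofinality of the reduced product $\prod_n \mu_n$ modulo finite sets exceeding $\mu^{+}$, together with a scale witnessing this. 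From this data Viale constructs a $P$-ideal $\mathcal I$ over an uncountable set $S$ whose elements are countable ``approximations'' arising from the scale, the ideal arranged so that its $P$-property encodes the fact that any countably many such approximations can be amalgamated along the scale.

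Finally one applies the $P$-ideal dichotomy to $\mathcal I$ and checks that each of the two alternatives contradicts $\mu^{\aleph_0} > \mu^{+}$: an uncountable $T \subseteq S$ with $[T]^\omega \subseteq \mathcal I$ would amalgamate uncountably many approximations in a way the scale forbids, while a decomposition $S = \bigcup_n S_n$ with each $S_n \perp \mathcal I$ would, by a counting argument on the $S_n$, bound the relevant cofinality by $\mu^{+}$. Either way we reach a contradiction, so $\mu^{\aleph_0} = \mu^{+}$ and \textsf{SCH} holds. The step I expect to be genuinely delicate — and the one offloaded to Viale's paper rather than reproved here — is the \textsf{PCF}-theoretic construction of $\mathcal I$, the verification that it is a $P$-ideal, and the argument that neither horn of the dichotomy can hold; on our side, all that is required is the single invocation of Theorem 6.11, which is why the corollary itself admits a one-line proof.
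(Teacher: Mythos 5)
Your proof is the paper's own: Theorem 6.11 gives the $P$-ideal dichotomy from $\textsf{PFA}(T^*)$, and the paper then simply cites Viale for the implication from PID to SCH, which is exactly your one-line argument. Your supplementary sketch of Viale's proof is not quite what he does --- Viale builds a $P$-ideal from a covering matrix on $\mu^+$ and derives a covering property, rather than constructing one directly from a scale in $\prod_n \mu_n$ --- but since you explicitly defer the construction to Viale's paper, this imprecision does not affect the corollary.
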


\begin{corollary}
	$\textsf{PFA}(T^*)$ implies that for all regular uncountable cardinals $\kappa$ and $\lambda$, 
	if there exists a $(\kappa,\lambda^*)$-gap in $P(\omega)$, then $\kappa = \lambda = \omega_1$.
	\end{corollary}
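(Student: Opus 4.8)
The plan is to derive this from the $P$-ideal dichotomy (Theorem 6.11), which holds under $\textsf{PFA}(T^*)$, together with $\mathfrak p > \omega_1$ (Corollary 4.6); $T^*$ itself plays no further role, and this is the route by which the statement is obtained under $\textsf{PFA}$ (a theorem of Todorcevic). First I would record the complement duality of gaps: if $(\langle a_\alpha : \alpha<\kappa\rangle,\langle b_\beta : \beta<\lambda\rangle)$ is a $(\kappa,\lambda^*)$-gap, then $(\langle \omega\setminus b_\beta : \beta<\lambda\rangle,\langle \omega\setminus a_\alpha : \alpha<\kappa\rangle)$ is a $(\lambda,\kappa^*)$-gap, an interpolant of one being the complement of an interpolant of the other. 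Hence the existence of a $(\kappa,\lambda^*)$-gap is symmetric in $\kappa$ and $\lambda$, so we may assume $\kappa\le\lambda$, and it suffices to show $\lambda=\omega_1$ (then $\kappa=\omega_1$, since $\kappa$ is uncountable). So I would assume toward a contradiction that $\lambda\ge\omega_2$, with $\kappa\le\lambda$ both regular uncountable, and aim to refute the existence of such a gap.

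Next, after the routine normalization of the gap (literal inclusions $a_\alpha\subseteq a_{\alpha'}\subseteq b_{\beta'}\subseteq b_\beta$ for $\alpha<\alpha'$, $\beta<\beta'$, and infinite consecutive differences), I would attach to the gap a $P$-ideal $\mathcal I$, following Todorcevic, whose underlying set $S$ is an index set of size $\lambda\ge\omega_2$: a countable $x\subseteq S$ is placed in $\mathcal I$ precisely when the sub-configuration of the gap determined by $x$ admits an interpolant, with the definition arranged to depend on only countably many coordinates of the ``large'' side at a time, so that it remains meaningful even when $\kappa$ is also $\ge\omega_2$. Closure under subsets and membership of all finite subsets of $S$ are immediate; that $\mathcal I$ is a $P$-ideal is the one substantive verification, and here the inputs are that every $\subseteq^*$-decreasing $\omega$-chain in $\mathcal P(\omega)$ has a pseudo-intersection and that $\mathfrak p>\omega_1$, which together let one amalgamate countably many witnessing interpolants into a single one (modulo a finite set of coordinates). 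I would then apply the $P$-ideal dichotomy to $\mathcal I$ over $S$. In the first alternative there is an uncountable $T\subseteq S$ with $[T]^\omega\subseteq\mathcal I$; amalgamating along $T$ — again using $\mathfrak p>\omega_1$, the regularity of $\lambda$, and a reflection of the gap to a cofinal subset — produces a single interpolant for the whole gap, contradicting that it is a gap. In the second alternative $S=\bigcup_n S_n$ with $[S_n]^\omega\cap\mathcal I=\emptyset$; since $\lambda\ge\omega_2$ is regular, some $S_n$ has size $\lambda$, and then a suitably chosen countable subset of $S_n$ does belong to $\mathcal I$ (using existence of pseudo-intersections of $\omega$-chains), contradicting $[S_n]^\omega\cap\mathcal I=\emptyset$. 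Either alternative yields a contradiction, so no such gap exists, and by the first paragraph this forces $\kappa=\lambda=\omega_1$.

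The main obstacle is the middle step: pinning down the base set $S$ and the exact definition of $\mathcal I$ so that it is genuinely a $P$-ideal (this is where $\mathfrak p>\omega_1$ is used) while simultaneously making \emph{both} horns of the dichotomy contradictory — in particular handling the case where the ``upper'' index $\kappa$ is itself $\ge\omega_2$, where one cannot afford to quantify over all of $\kappa$ when building interpolants and must let $\mathcal I$ see only countably many coordinates at a time. This is exactly the content of Todorcevic's derivation of the gap spectrum from the $P$-ideal dichotomy, which I would follow, specializing to the case that $\kappa$ and $\lambda$ are regular and uncountable.
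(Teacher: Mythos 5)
Your plan matches the paper's route: the paper proves this corollary by simply citing that the gap-spectrum restriction is a known consequence of the $P$-ideal dichotomy (referring to Todorcevic's paper on PID and to Viale), and you likewise reduce the problem to Todorcevic's theorem and then sketch how that theorem is proved. Two remarks on the sketch. First, the paper explicitly separates Corollaries 6.12--6.14, which it says follow from the $P$-ideal dichotomy \emph{alone}, from Corollaries 6.15--6.16, which additionally require $\mathfrak{p}>\omega_1$; the gap corollary is in the first group, so your claim that $\mathfrak{p}>\omega_1$ is an essential ingredient of the derivation is at odds with the paper. This does not affect the final conclusion (both PID and $\mathfrak{p}>\omega_1$ hold under $\textsf{PFA}(T^*)$), but it does indicate that the reconstruction you have in mind is not the one the paper is pointing at, and you should double-check whether the extra hypothesis is really needed where you think it is.

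Second, as a proof the write-up is incomplete at exactly the point where the content lies: the $P$-ideal $\mathcal{I}$ is never actually defined, only described by the properties you would like it to have, and the verifications that it is a $P$-ideal and that each horn of the dichotomy produces a contradiction are not carried out. You flag this yourself as ``the main obstacle'' and defer to Todorcevic's derivation. Since the paper's own proof is likewise a bare citation, this is not a defect relative to the source, but one should be clear that what you have written is an annotated citation rather than a self-contained argument.
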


These statements are immediate, since the conclusions of the corollaries are consequences of 
the $P$-ideal dichotomy (\cite{pid2}, \cite{viale}).

It is well-known that the $P$-ideal dichotomy combined with $\mathfrak p > \omega_1$ 
has many consequences which do not follow from the $P$-ideal dichotomy alone. 
Since \textsf{PFA}($T^*$) implies $\mathfrak p > \omega_1$ by Corollary 4.6, we can derive 
additional conclusions. 
We mention two important examples (see \cite{part}).

\begin{corollary}
	$\textsf{PFA}(T^*)$ implies that for all $\alpha < \omega_1$, 
	$\omega_1 \to (\omega_1,\alpha)^2$.
	\end{corollary}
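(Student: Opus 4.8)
The plan is not to prove the partition relation from scratch, but to exhibit it as a consequence of two facts already established together with a known combinatorial dichotomy. First I would record that $\textsf{PFA}(T^*)$ implies the $P$-ideal dichotomy (Theorem 6.11) and that it implies $\mathfrak{p} > \omega_1$ (Corollary 4.6), hence also $\mathfrak{b} > \omega_1$ since $\mathfrak{p} \le \mathfrak{b}$. The result from \cite{part} that I would then invoke is precisely that the $P$-ideal dichotomy together with $\mathfrak{p} > \omega_1$ implies $\omega_1 \to (\omega_1,\alpha)^2$ for every $\alpha < \omega_1$. Feeding the first two facts into this implication gives the corollary, so no genuinely new argument is required here.

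For orientation, the implication taken from \cite{part} is proved along the following lines. One fixes a coloring $c : [\omega_1]^2 \to 2$, assumes there is no uncountable $0$-homogeneous set, and must produce, for each $\alpha < \omega_1$, a set of order type $\alpha$ which is $1$-homogeneous. The obstruction to a direct recursion is that once one has built a $1$-homogeneous set $B$, the set of ordinals that are $1$-connected to every element of $B$ need not be uncountable. One handles this by attaching to $c$ a $P$-ideal $\mathcal{I}$ on a suitable underlying set --- whose members are the countable sets that are ``small'' for the purpose of extending $1$-homogeneous sets --- and applying the dichotomy: the alternative giving an uncountable set all of whose countable subsets lie in $\mathcal{I}$ is ruled out using the absence of an uncountable $0$-homogeneous set, while in the other alternative the underlying set decomposes into countably many pieces on each of which $\mathcal{I}$ is trivial. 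A recursion of length $\alpha$ then coheres $1$-homogeneous sets of smaller order types into one of order type $\alpha$; the hypothesis $\mathfrak{b} > \omega_1$ is used at the point where an $\omega_1$-indexed family of functions in $\omega^\omega$ must be $\le^*$-bounded by a single function.

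The substantive mathematics lives entirely in \cite{part}; the only thing that needs checking for the present paper is that $\textsf{PFA}(T^*)$ supplies both hypotheses, which is immediate from Theorem 6.11 and Corollary 4.6. The delicate point in the cited argument --- and the reason the corollary is genuinely a statement about the \emph{combination} of the two consequences of $\textsf{PFA}(T^*)$, rather than about either one alone --- is the use of $\mathfrak{b} > \omega_1$: the $P$-ideal dichotomy by itself does not suffice, since Todorcevic's $\rho$-functions on walks on ordinals produce, whenever $\mathfrak{b} = \omega_1$, a coloring witnessing the failure of $\omega_1 \to (\omega_1,\omega+2)^2$, and such a coloring is compatible with the dichotomy. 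Accordingly the proof I would write down consists of this combination together with the citation to \cite{part}.
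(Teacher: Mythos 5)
Your proposal is correct and matches the paper's argument exactly: the paper derives this corollary by citing \cite{part} for the fact that the $P$-ideal dichotomy together with $\mathfrak{p} > \omega_1$ yields the partition relation, and noting that both hypotheses follow from $\textsf{PFA}(T^*)$ via Theorem 6.11 and Corollary 4.6. The additional sketch you give of how the \cite{part} implication works is a helpful orientation but not something the paper includes.
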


\begin{corollary}
	$\textsf{PFA}(T^*)$ implies the non-existence of $S$-spaces.
	\end{corollary}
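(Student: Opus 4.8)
The plan is to assemble the statement from two consequences of $\textsf{PFA}(T^*)$ established above together with a theorem from the literature, so that the ``proof'' amounts to a short deduction. \emph{Step one}: by Theorem 6.11, $\textsf{PFA}(T^*)$ implies the $P$-ideal dichotomy. \emph{Step two}: by Corollary 4.6, $\textsf{PFA}(T^*)$ implies $\mathfrak p > \omega_1$, equivalently $\textsf{MA}_{\omega_1}(\sigma\text{-centered})$. \emph{Step three}: invoke the theorem of Todorcevic (see \cite{part}) that the $P$-ideal dichotomy together with $\mathfrak p > \omega_1$ implies that there are no $S$-spaces. Combining these three steps gives the corollary, and the only thing that needs checking is that \emph{both} hypotheses of the cited theorem are in hand, which is exactly what Sections~4 and~6 provide.

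Even though Step three is not reproduced here, it is worth recalling the shape of its proof. Suppose $X$ were an $S$-space, i.e.\ a regular space which is hereditarily separable but not hereditarily Lindel\"of; since $X$ is not hereditarily Lindel\"of one fixes an uncountable right-separated subspace, witnessed by points $x_\alpha$ and open sets $U_\alpha$ with $x_\alpha \in U_\alpha$ and $x_\beta \notin U_\alpha$ for $\beta < \alpha$, which remains hereditarily separable. To this configuration one attaches a $P$-ideal $\mathcal I$ on $\omega_1$ whose two dichotomy alternatives lead, respectively, to a contradiction with the right-separation of the sequence and --- after restricting to a suitable uncountable piece and applying $\textsf{MA}_{\omega_1}(\sigma\text{-centered})$ --- to an uncountable left-separated subspace, which contradicts hereditary separability. (Alternatively, \cite{part} derives the non-existence of $S$-spaces from the partition relation $\omega_1 \to (\omega_1,\alpha)^2$ for all $\alpha < \omega_1$ of Corollary 6.15, which again rests on the $P$-ideal dichotomy together with $\mathfrak p > \omega_1$.)

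The step I expect to be the main obstacle, were a self-contained proof demanded, is Step three: the delicate combinatorics of manufacturing the $P$-ideal $\mathcal I$ from an arbitrary $S$-space and of using $\mathfrak p > \omega_1$ to defeat the second horn of the dichotomy. This is precisely the content of \cite{part}, has nothing to do with $T^*$, and so I would cite it rather than rework it. I would, however, stress that the hypothesis $\mathfrak p > \omega_1$ is indispensable and cannot be weakened to the $P$-ideal dichotomy alone: the latter is consistent with \textsf{CH}, whereas \textsf{CH} implies that $S$-spaces exist. Thus Corollary 6.16 genuinely needs a consequence of $\textsf{PFA}(T^*)$ lying beyond Theorem 6.11, and Corollary 4.6 supplies exactly that.
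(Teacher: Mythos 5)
Your proposal is correct and matches the paper's reasoning exactly: the paper also derives Corollary 6.16 immediately by combining Theorem 6.11 ($\textsf{PFA}(T^*) \Rightarrow$ PID), Corollary 4.6 ($\textsf{PFA}(T^*) \Rightarrow \mathfrak p > \omega_1$), and the citation to \cite{part} that these two jointly rule out $S$-spaces. Your added sketch of the argument in \cite{part} and the remark that $\mathfrak p > \omega_1$ is genuinely needed (since PID is compatible with \textsf{CH}) go a bit beyond what the paper writes but are accurate and consistent with it.
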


In contrast, the existence of a Suslin tree implies the existence of a compact 
$L$-space and an $S$-space (\cite{rudin}, \cite[Chapter 5]{todorbook}). 
This displays contrasting pictures provided by the forcing axioms $\textsf{PFA}(T^*)$ and 
$\textsf{PFA}(S)$.

Observe that our results prove that the $P$-ideal dichotomy together with $\mathfrak p > \omega_1$ 
does not imply that all $\omega_1$-Aronszajn trees are special (or even that all $\omega_1$-Aronszajn trees have 
a stationary antichain). 
A similar but more restrictive result was proven earlier in \cite{mildenberger}, 
where it was shown that an ideal dichotomy for 
$\omega_1$-generated ideals together with $\mathfrak p > \omega_1$ is consistent with a non-special Aronszajn tree.

\bigskip

We close the paper with some topics for future research. 
We currently do not know whether some other notable consequences of \textsf{PFA} not addressed in this paper 
follow from \textsf{PFA}($T^*$). 
These include the open coloring axiom, the mapping reflection principle, measuring, and the non-existence of 
weak Kurepa trees. 

An important part of Todorcevic's theory of \textsf{PFA}($S$) is the model \textsf{PFA}($S$)$[S]$, which is obtained 
by forcing over a model of \textsf{PFA}($S$) with the coherent Suslin tree $S$. 
In contrast, \textsf{PFA}($T^*$) implies that every $\omega_1$-Aronszajn tree is special on cofinally many levels 
(see the argument from \cite[Lemma 4.6, Chapter IX]{shelah}). 
It follows that forcing with $T^*$ collapses $\omega_1$. 
We do not know, however, whether generic extensions of models of \textsf{PFA}($T^*$) by other forcings associated 
with $T^*$ could be interesting, such as the forcing for specializing $T^*$ with finite appoximations.

The theory of \textsf{PFA}($T^*$) could possibly be relevant to the issues of chain conditions and Martin's 
axiom which motivated \textsf{PFA}($S$). 
For example, it is open whether Martin's axiom follows from the statement that all $\omega_1$-c.c.\! forcings are 
Knaster, or from the statement that the product of $\omega_1$-c.c.\! forcings is $\omega_1$-c.c.

Finally, we mention a problem suggested to the author by Justin Moore and the referee: 
Does $\textsf{PFA}(T^*)$ imply that 
all normal Aronszajn trees with no stationary antichains are club isomorphic? 
This would imply that the forcing axiom $\textsf{PFA}(T^*)$ is independent of the choice of $T^*$ and 
thus to some extent canonical.

\bibliographystyle{plain}
\bibliography{paper33}

\end{document}